\let\arXiv\arxiv
\def\doi#1{ {\href{http://dx.doi.org/#1}
   {{\mdseries\ttfamily DOI}}}}
\renewcommand{\tt}{ \tilde t}
\newcommand{\at}{{\tilde t}}
\newcommand{\tq}{{\tilde q}}
\newcommand{\tf}{{\tilde f}}
\newcommand{\td}{{\tilde \delta}}
\newcommand{\ax}{{\tilde x}}
\newcommand{\lL}{{\underline L}}
\newcommand{\newsection}[1]
{\section{#1}\setcounter{theorem}{0} \setcounter{equation}{0}
\par\noindent}
\newtheorem{theorem}{Theorem}
\newtheorem{lemma}[theorem]{Lemma}
\newcommand{\R}{{\mathbb R}}
\newcommand{\ang}{{\not\negmedspace\partial }}
\newcommand{\la}{\langle}
\newcommand{\ra}{\rangle}
\newcommand{\rs}{{r^*}}
\renewcommand{\S}{{\mathbb S}}
\newcommand{\M}{\mathcal M}
\newcommand{\tJ}{{\tilde{J}}}
\newcommand{\tY}{{\tilde{Y}}}
\newcommand{\tN}{{\tilde{N}}}
\renewcommand{\th}{{\tilde{h}}}
\newcommand{\uL}{\underline{L}}
\newcommand{\weight}{{\Bigl(1-\frac{2M}{r}\Bigr)}}
\begin{document}

\title
{
Global existence for quasilinear wave equations close to Schwarzschild
}

\author{Hans Lindblad}
\address{Department of Mathematics, Johns Hopkins University, Baltimore,
  MD  21218}
\email{lindblad@math.jhu.edu}
\author{Mihai Tohaneanu}
\address{Department of Mathematics, University of Kentucky, Lexington,
  KY  40506}
\email{mihai.tohaneanu@uky.edu}

\begin{abstract}
  In this article we study the quasilinear wave equation $\Box_{g(u, t, x)} u = 0$ where the metric $g(u, t, x)$ is close to the Schwarzschild metric. Under suitable assumptions of the metric coefficients, and assuming that the initial data for $u$ is small enough, we prove global existence of the solution. The main technical result of the paper is a local energy estimate for the linear wave equation on metrics with slow decay to the Schwarzschild metric.
 \end{abstract}

\maketitle

\newsection{Introduction}
The goal of this paper is to understand the global behavior of solutions to the quasilinear wave equation
\begin{equation}\label{maineeqn}
\Box_{g(u, t, x)} u = 0, \qquad u |_{\tt=0} = u_0, \qquad \tilde T u |_{\tt=0} = u_1
\end{equation}
Here $\Box_g$ denotes the d'Alembertian with respect to a Lorentzian metric $g$, which is equal the Schwarzschild metric when $u\equiv 0$, and
$\tilde T$ is a smooth, everywhere timelike vector field that equals
$\partial_t$ away from the black hole.  The coordinate $\tt$
is chosen so that the slice $\tt=0$ is space-like and so $\tt=t$
away from the black hole.

 Our motivation comes from the black hole stability problem, which roughly speaking asserts that solutions to Einstein's Equations with initial data that start close to a Kerr solution have domains of outer communication that will converge towards a (possibly different) Kerr solution. The related problem for Minkowski spacetimes has been settled in two different ways by Christodoulou -Klainerman \cite{CK} and Lindblad - Rodnianski \cite{LR1, LR2}. The second approach uses wave coordinates in which the metric satisfies
 \begin{equation}\label{wavecoord}
 \partial_\alpha \big( | g |^{ 1 / 2 } g^{ \alpha \beta } \big) = 0,
 ,\qquad
|g|=|\det{g}|
 \end{equation}
 and in which Einstein's Equations become  a system of quasilinear wave equations
\begin{equation} \label{waveeinstein}
\widetilde{\Box}_g g_{\alpha\beta} = F_{\alpha\beta}(g)[\partial g, \partial g].
\end{equation}
 A model of the system above was considered in \cite{Lin2,A2,Lin1}, where it was shown that the equation
\begin{equation}\label{model}
 \widetilde {\Box}_{g(u)} u = 0, \qquad \widetilde {\Box}_{g} = g^{\alpha\beta}\partial_{\alpha}\partial_{\beta}
\end{equation}
has global solutions for small initial data, assuming that $g(0)$ is the Minkowski metric. However the characteristics for \eqref{waveeinstein} are much less divergent than those for \eqref{model}, as the wave coordinate condition \eqref{wavecoord} forces the components of the metric that determine the main behavior to be close to those of Schwarzschild \cite{LR2,Lin3}.

 As a toy model of the system near Schwarzschild black holes, one can look at the quasilinear wave equation \eqref{maineeqn}, where the metric $g$ is given by
\begin{equation}\label{quasmetric}
g^{\alpha\beta} = g_S^{\alpha\beta} + H^{\alpha\beta}(t,x)u + O(u^2)
\end{equation}
with $g_S$ denoting the Schwarzschild metric and $H^{\alpha\beta}$ being smooth functions. Ideally we would like to assume only that $H^{\alpha\beta}$, as well as certain vector fields applied to $H^{\alpha\beta}$, are bounded functions. However, already in the close to Minkowski case \eqref{model} with constant $H$ the solution is badly behaved and to avoid this we will impose conditions on $H^{\alpha\beta}$ that resemble the wave coordinate condition \eqref{wavecoord}. These conditions will in particular
imply that components of $H^{\alpha\beta}$ corresponding to coefficient in
the wave operator of derivatives transversal to the outgoing light cones
must decay. We refer the reader to Section 3 for the exact conditions on $H^{\alpha\beta}$.
The main theorem of our paper is the following:

\begin{theorem}\label{mainthmvague}
Assume that the metric $g$ is like in \eqref{quasmetric}, and satisfies a couple of extra conditions near the photonsphere (see Section 3 and the discussion of why such conditions are needed). Then there exists a global classical solution to \eqref{maineeqn}, provided that the initial data is smooth, compactly supported and small enough.
\end{theorem}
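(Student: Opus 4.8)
The plan is to run a continuity/bootstrap argument built on the linear local energy estimate stated below (the main technical result of the paper), commuting the equation with the Killing and approximately-Killing vector fields of the Schwarzschild exterior and upgrading the resulting integrated-energy bounds to pointwise decay.

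First I would fix a large integer $N$ and let $Z$ range over $\{\partial_t,\Omega_{ij}\}$, the Killing fields of $g_S$ (away from the black hole one may also bring in the remaining Minkowski fields, whose commutators with $\Box_{g_S}$ are supported where $g$ is exactly Schwarzschild and are harmless there). For a solution on a slab $[0,T]$ I would define a master norm $\mathcal E_N(T)$ collecting the local-energy norms $\|Z^I u\|_{\LE[0,T]}$ for $|I|\le N$ together with the associated $r^p$-weighted energy fluxes, and make the bootstrap assumption $\mathcal E_N(T)\le 2C_0\varepsilon$, where $\varepsilon$ measures the size of the (smooth, compactly supported, small) data. Finite speed of propagation plus the compact support of the data confine $u$ to a forward solid light cone, so only the region near the horizon and the wave zone play a role.

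Next I would commute \eqref{maineeqn} with $Z^I$. Since the $Z$'s are Killing for $g_S$, $[\Box_{g_S},Z^I]$ vanishes (or is lower order, supported in the exactly-Schwarzschild region), so writing $g^{\alpha\beta}=g_S^{\alpha\beta}+H^{\alpha\beta}u+O(u^2)$ one gets
\[
\Box_{g_S}(Z^I u)=\big[\,\Box_{g_S}-\Box_{g(u,t,x)}\,,\,Z^I\,\big]u-Z^I\big((\Box_{g_S}-\Box_{g(u,t,x)})u\big).
\]
Every term on the right is at least quadratic: schematically a factor built from $u$ and its derivatives (via $H^{\alpha\beta}u+O(u^2)$ and the $Z$-derivatives of $H^{\alpha\beta}$, all controlled by the Section~3 hypotheses) times a second derivative of $u$. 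I would then feed each of these into the linear local energy estimate applied to $Z^I u$, sum over $|I|\le N$, and use the $r^p$-weighted hierarchy together with a Klainerman--Sobolev inequality adapted to the Schwarzschild exterior to extract pointwise decay of $u$ and $\partial u$. That decay furnishes the small factor needed to absorb the quadratic right-hand sides back into $\mathcal E_N$, producing $\mathcal E_N(T)\le C_0\varepsilon+C\varepsilon\,\mathcal E_N(T)$ and hence, for $\varepsilon$ small, the improved bound $\mathcal E_N(T)\le \tfrac32 C_0\varepsilon$. A standard continuation criterion (persistence of the solution as long as a sufficiently high Sobolev norm stays finite) then converts this a priori estimate into global existence of the classical solution.

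The main obstacle I anticipate is not the commutator bookkeeping but the interaction of the \emph{slow} decay of the metric perturbation $H^{\alpha\beta}u$ with trapping at the photon sphere. The linear local energy estimate must degenerate at the trapped set, so each commutation with the quasilinear part of the operator threatens to reintroduce a loss of derivatives there. Closing the argument requires that the structural conditions on $H^{\alpha\beta}$ near the photon sphere annihilate precisely the dangerous components — those multiplying derivatives transversal to the outgoing cones — so that the error terms inherit the degenerate weight of the local energy norm, and simultaneously that the $r^p$-weighted estimates yield enough integrated-in-time decay to beat the logarithmically slow pointwise decay coming from the $H^{\alpha\beta}u$ term. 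Showing these two mechanisms are compatible with the weights in the main estimate is the heart of the proof.
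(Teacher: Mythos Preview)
Your outline has the right shape but misses the structural mechanism that actually closes the argument. The linear estimate near the photon sphere produces an error term of the form $\int_{ps}\frac{\varepsilon}{t}|\partial Z^I u|^2$, which through Gronwall forces the top-order energy $\mathcal E_N$ to grow like $t^{C\varepsilon}$; you cannot recover a bounded bootstrap $\mathcal E_N(T)\le 2C_0\varepsilon$ at the top level. The paper instead runs a \emph{two-tier} bootstrap: the full $N$-energy is allowed to grow like $\langle t\rangle^\delta$, while a lower energy $\mathcal E_{\tilde N}$ with $\tilde N=N-3$ is shown to stay bounded by paying three derivatives to control the trapping error via $\int_{ps}\frac{\varepsilon}{t}|\partial u_{\le \tilde N}|^2\lesssim \sum_T T^{-1+C\varepsilon}\|\partial u_{\le \tilde N+1}(0)\|^2$. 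The pointwise decay (Section~6) then uses only this bounded lower tier. Your single-level closure $\mathcal E_N\le C_0\varepsilon+C\varepsilon\,\mathcal E_N$ is too optimistic and will not hold.

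Two further points. First, the commuting set must include the scaling $S=t\partial_t+\tilde r\partial_{\tilde r}$, not just the Killing fields $\partial_t,\Omega$; without $S$ you cannot run the Klainerman--Sobolev/Klainerman--Sideris argument that gives the pointwise rates, and $[S,\Box_{g_S}]$ is nonzero everywhere (handled via \eqref{S-far}), so your claim that commutators are ``supported where $g$ is exactly Schwarzschild'' is wrong. Second, the crucial device near the trapped set is not a structural cancellation in $H^{\alpha\beta}$ but an \emph{elliptic estimate} (Lemma~\ref{elliptic}) that trades spatial derivatives for time derivatives: since $\partial_t h$ decays like $t^{-3/2}$ while $\partial_r h$ only like $t^{-1/2}$, one first commutes with $\partial_t^N$, closes that, then recovers spatial derivatives elliptically. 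Finally, the second ``extra condition'' you allude to is not at the photon sphere at all but at the light cone: the null-frame component $H^{\underline L\underline L}$ must decay faster (condition \eqref{nullcond}), and this is what saves the estimates in the wave zone; the paper does not use the $r^p$ hierarchy.
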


 We will provide a more precise version in Section 3, Theorem \ref{mainthm}, after more notation is introduced.
The main new technical ingredient of the paper is a local energy estimate for the linear wave equation $\Box_g u = f$ for a metric $g$ close to the Schwarzschild metric. Due to the presence of the trapped null geodesics at the photonsphere $r=3M$, local energy estimates are difficult to establish even for small perturbations of the metric. However, for metrics converging to the Schwarzschild metric at a rate of $t^{-1-}$ (i.e. $t^{-1-\epsilon}$ for any $\epsilon>0$),  near the photonsphere, one can prove local energy estimates perturbatively (see, for example, \cite{MTT}). We are able to prove such an estimate by assuming a slower rate of convergence of only $t^{-1/2}$. We refer the reader to Section 4 for the precise statement of the result. We hope to prove a similar linear local energy estimate for perturbations of the Kerr metric using a combination of the methods here and modifications of methods in \cite{TT}, leading to the same nonlinear result.
 It is interesting that the required decay is the best one can hope to prove from just energy and local energy bounds. Using energies with growing weights one may be able to prove some additional decay under stronger assumptions. However for equations with nonlinear terms that only satisfy the weak null condition such as Einstein's equations in wave coordinates, the best one can hope for is $t^{-1}$ decay, see \cite{Lin3}, so our improvement of \cite{MTT} is needed even if one proves more decay.

 The linear wave equation $\Box_g u = f$ on Schwarzschild and Kerr manifolds has been studied extensively. Local energy estimates, Strichartz estimates and sharp pointwise decay rates of solutions are by now well-understood (see \cite{BS1},
\cite{BS2}, \cite{BSter}, \cite{DR1}, \cite{DR2}, \cite{MMTT}, \cite{Luk1}, \cite{DSS} for
Schwarzschild, \cite{TT}, \cite{DaRoNotes}, \cite{AB09}, \cite{To}, \cite{Luk2}, \cite{Tat}, \cite{MTT} for Kerr with
small angular momentum,  \cite{DaRoNew}, \cite{DaRoNew2}, \cite{DaRoNew3} for
Kerr with $|a| \!< \!M$, and \cite{Aret} for $|a| \!=\! M$).

Global existence for semilinear equations of the form $\Box_g u = u^p$ with small initial data was shown in \cite{DR3} ($p>4$ with radial data), \cite{BSter} ($p>3$) in the Schwarzschild case, and
\cite{LMSTW} for $p>1+\sqrt{2}$ for Kerr with $|a|\ll M$. For semilinear wave equations with a null condition on Kerr with small angular momentum, global existence was shown in \cite{Luk3,IK}. For quasilinear wave equations with small initial data, global existence was shown for time dependent metrics close to Minkowski in \cite{Yang1}, \cite{Yang2}. whereas in the asymptotically Kerr- de Sitter case a similar result to ours was proved in \cite{HV}.

 The paper is structured as follows. In Section 2 we introduce local energy estimates in Minkowski and Schwarzschild spacetimes. In Section 3 we give the statement of our main theorem, and present the bootstrap argument that finishes the proof. Section 4 contains our main linear estimate for perturbations of the Schwarzschild metric. Section 5 deals with commuting the equation with vector fields. Section 6 gives pointwise decay from local energy estimates. Section 7 is a refinement of Section 5 for lower order energies.

\newsection{Local energy estimates on Minkowski and Schwarzschild backgrounds}
We use $(t=x_0, x)$ for the coordinates in $\R^{1+3}$. We use Latin
indices $i,j=1,2,3$ for spatial summation and Greek indices $\alpha,\beta=0,1,2,3$ for space-time summation. In $\R^3$ we also use
polar coordinates $x = r \omega$ with $\omega \in \S^2$.  By $\la r\ra$ we denote a smooth radial function which agrees with $r$ for
large $r$ and satisfies $\la r \ra \geq 1$.  We consider a partition
of $ \R^{3}$ into the dyadic sets $A_R= \{\la r \ra \approx R\}$ for
$R \geq 1$, with the obvious change for $R=1$. We will use the notation $A\lesssim B$ to mean that there is a constant $C$ independent of $u$ and $\epsilon$ so that $A\leq CB$; the value of $C$ might change from line to line.

We introduce the local energy norm $LE$
\begin{equation}
 \| u\|_{LE} = {\sup}_R \, \| \la r\ra^{-\frac12} u\|_{L^2 (\R \times A_R)}  ,\qquad
 \| u\|_{LE[t_0, t_1]} ={\sup}_R \, \| \la r\ra^{-\frac12} u\|_{L^2 ([t_0, t_1] \times A_R)},
\label{ledef}\end{equation}
its $H^1$ counterpart
\begin{equation}
  \| u\|_{LE^1} = \| \nabla u\|_{LE} + \| \la r\ra^{-1} u\|_{LE},\qquad
 \| u\|_{LE^1[t_0, t_1]} = \| \nabla u\|_{LE[t_0, t_1]} + \| \la r\ra^{-1} u\|_{LE[t_0, t_1]},
\end{equation}
as well as the dual norm
\begin{equation}
 \| f\|_{LE^*} = {\sum}_R  \| \la r\ra^{\frac12} f\|_{L^2 (\R \times A_R)},\qquad
 \| f\|_{LE^*[t_0, t_1]} = {\sum}_R  \| \la r\ra^{\frac12} f\|_{L^2 ([t_0, t_1] \times A_R)}.
\label{lesdef}\end{equation}

Then we have the following scale invariant local energy estimate on Minkowski backgrounds:
\begin{equation}\label{localenergyflat}
\|\nabla u\|_{L^{\infty}_t L^2_x} + \| u\|_{LE^1}
 \lesssim \|\nabla u(0)\|_{L^2} + \|\Box u\|_{LE^*+L^1_t L^2_x}
\end{equation}
and a similar estimate involving the $LE^1[t_0, t_1]$ and $LE^*[t_0, t_1]$ norms.
This is proved using a small variation of Morawetz's method, with
multipliers of the form $a(r) \partial_r + b(r)$ where $a$ is
positive, bounded and increasing.
There are many similar results obtained in the case of
perturbations of the Minkowski space-time; see, for example, \cite{M},
\cite{KSS}, \cite{KPV}, \cite{SmSo},\cite{St}, \cite{Strauss},
\cite{Al}, \cite{MS} and \cite{MT}.

 The metric for the Schwarzschild space-time can be written in the exterior region $r>2M$ as
\begin{equation}
ds^2=-\weight d\at^2+{\weight}^{-1}dr^2+r^2d\omega^2
\label{swm}\end{equation}
where $d\omega^2$ is the measure on the sphere $\S^2$.
The surface $r=2M$ is called the event horizon.  While the singularity
at $r=0$ is a true metric singularity, we note that the apparent
singularity at $r=2M$ is merely a coordinate singularity. Indeed,
let the Regge-Wheeler coordinate be given by
\[
\rs=r+2M\log(r-2M)-3M-2M\log M,
\]
and set $v = \at+\rs$. Then in the $(r,v,\omega)$ coordinates the metric
is expressed in the form
\[
ds^2=-\weight dv^2+2 dvdr+r^2d\omega^2,
\]
which extends analytically into the black hole region $r<2M$.

Following \cite{MMTT}, we introduce the function $t$ defined by
\[
t = v - \mu(r)=\at+\rs-\mu(r)
\]
where $\mu$ is a smooth function of $r$.
In the $(t,r,\omega)$ coordinates the metric has the form
\begin{equation*}
ds^2=-\weight dt^2 +2\left(1-\weight\mu'(r)\right) dt dr +
    \Bigl(2 \mu'(r) - \weight (\mu'(r))^2\Bigr)  dr^2  +r^2d\omega^2.
\end{equation*}

On the function $\mu$ we impose the following two conditions:

(i) $\mu (r) \geq  \rs$ for $r > 2M$, with equality for $r >
{5M}/2$.

(ii)  The surfaces $t = const$ are space-like, i.e.
\[
\mu'(r) > 0, \qquad 2 - \weight \mu'(r) > 0.
\]
\noindent (i) insures that the $(t,\ax)$,
coordinates, where $\ax\!=\!r\omega$, coincide with the $(\at,\ax)$ coordinates in $r
\!>\!{5M}\!/2$.

In the $r^*$ coordinates the metric takes the form
 \begin{equation}
ds^2=\weight (-dt^2+d{r^*}^2)+r^2d\omega^2
\label{swm2}\end{equation}

 Next we introduce rectangular coordinates
 \begin{equation}
 x=r\omega
 \end{equation}
 and express the Schwarzschild metric in the $(t,x)$ coordinates.
For $r>5M/2$ the expression for Schwarzschild metric in rectangular coordinates is
\begin{equation}
ds^2=-\weight dt^2+\Big(\weight^{-1} \omega_i\,\omega_j +\delta_{ij}-\omega_i\omega_j  \Big)dx^i dx^j
\end{equation}
Note that in these coordinates $\det{g_S}=-1$,  since $r^2 drd\omega =dx$.
The inverse metric in these coordinates is
\begin{equation}
g_S^{00}=-\weight^{-1},\qquad g_S^{0i}=0,\qquad
g_S^{ij}=\weight\omega^i\omega^j + \delta^{ij}-\omega^i\omega^j
\end{equation}

Alternatively the metric can be expressed in the rectangular Regge-Wheeler coordinates
 \begin{equation}
 x^*=r^*\omega
 \end{equation}
 and express the Schwarzschild metric in the $(t,x^*)$ coordinates.
For $r>5M/2$ the expression for Schwarzschild metric in rectangular Regge-Wheeler coordinates is
\begin{equation}
ds^2=-\weight dt^2+\Big(\weight \omega_i\,\omega_j +\delta_{ij}-\omega_i\omega_j  \Big)d{x^*}^i d{x^*}^j
\end{equation}
The inverse metric in these coordinates is
\begin{equation}
{g_S^*}^{00}=-\weight^{-1},\qquad {g_S^*}^{0i}=0,\qquad
{g_S^*}^{ij}=\weight^{-1}\omega^i\omega^j + \delta^{ij}-\omega^i\omega^j
\end{equation}

 Given $0 < r_e <2M$ we consider the wave equation
\begin{equation}
 \Box_{g_S} u = f
 \label{boxsinhom}\end{equation}
in the cylindrical region
\begin{equation}
 \M_{[t_0, t_1]} =  \{ t_0 \leq t \leq t_1, \ r \geq r_e \}
\label{mr}\end{equation}

The lateral boundary of $\M_{[t_0, t_1]}$,
\begin{equation}
 \Sigma_R^+ =   \M_{[t_0, t_1]} \cap \{ r = r_e\}
\label{mr+}\end{equation}
is space-like, and can be thought of as the exit surface
for all waves which cross the event horizon.

We define the  outgoing energy on $\Sigma_R^+$ as
\begin{equation}\label{energy2}
 E[u](\Sigma_R^+) = \int_{\Sigma_R^+}
 \left(  |\partial_r u|^2 +   |\partial_t u|^2    +
|\ang u|^2 \right) r_e^2   dt d\omega
\end{equation}
and the energy on an arbitrary $t$ slice as
\begin{equation}\label{energy3}
 E[u](\tau) = \int_{ \M_{[t_0, t_1]} \cap \{t = \tau\}}
 \left(
|\partial_r u|^2 +   |\partial_t u|^2    + |\ang u|^2
\right) r^2  dr  d\omega
\end{equation}

We now introduce the local energy norm $LE_S^1$ associated to the Schwarzschild space-time:
\begin{equation} \label{leS}
 \| u\|_{LE_S^1[t_0, t_1]}  = \|\partial_r u\|_{LE} + \left\| \left(1-\tfrac{3M}r \right) \partial_{t}
 u\right\|_{LE} + \left\| \left(1-\tfrac{3M}r \right) \ang u\right\|_{LE} + \| r^{-1} u\|_{LE}
\end{equation}
For the inhomogeneous term we use the norm
\begin{equation} \label{leSstara}
\|f\|_{LE^*_{S}[t_0, t_1]} = \big\|  \left(1-\tfrac{3M}r \right)^{-1} u\big\|_{LE}
\end{equation}
We implicitly assume that all norms on the right hand side of the formulas above
are restricted to the set $\M_{[t_0, t_1]}$ where we study the wave equation
\eqref{boxsinhom}.
The following result was proved in \cite{MMTT}:
\begin{theorem}\label{Schwarz}
 Let $u$ be so that $\Box_{g_S} u = f$.
 Then we have
\begin{equation}\label{main.est.Schw}
E[u](\Sigma_R^+) + \sup_{t_0\leq t\leq t_1} E[u](t) + \|u\|_{LE_S^1[t_0, t_1]}^2
\lesssim E[u](t_0) + \|f\|_{L^1L^2+LE^*_S}^2.
\end{equation}
\end{theorem}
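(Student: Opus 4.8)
The plan is to prove the spacetime bound \eqref{main.est.Schw} by the energy--Morawetz multiplier method. For a vector field $X$ and a scalar function $q$ one has the divergence identity
\begin{equation*}
\nabla_\alpha\Big(T^{\alpha\beta}[u]\,X_\beta + \tfrac12\, q\, u\,\partial^\beta u - \tfrac14\,(\partial^\beta q)\,|u|^2\Big) = X(u)\,\Box_{g_S}u + \Big(\tfrac12\,\pi^{\alpha\beta}[X] - \tfrac12\, q\, g_S^{\alpha\beta}\Big)\partial_\alpha u\,\partial_\beta u - \tfrac14\,(\Box_{g_S}q)\,|u|^2 ,
\end{equation*}
where $T_{\alpha\beta}[u]=\partial_\alpha u\,\partial_\beta u-\tfrac12\,(g_S)_{\alpha\beta}\,g_S^{\mu\nu}\partial_\mu u\,\partial_\nu u$ and $\pi[X]$ is the deformation tensor of $X$. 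Integrating over the cylinder $\M_{[t_0,t_1]}$ and applying the divergence theorem, the left side produces fluxes on $\{t=t_0\}$, $\{t=t_1\}$ and $\Sigma_R^+$: when $X$ is future timelike near these surfaces, the flux on $\{t=t_0\}$ is controlled by $E[u](t_0)$, while those on $\{t=t_1\}$ and on the space-like surface $\Sigma_R^+$ have a favourable sign and dominate $E[u](t_1)$ and $E[u](\Sigma_R^+)$. The right side yields the spacetime bulk term and the term $X(u)\,f$. The estimate will follow by adding two copies of this identity, for two choices of $(X,q)$, and absorbing errors.

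\emph{Energy with red-shift.} Take $X=\partial_t$, $q=0$. Since $\partial_t$ is Killing, $\pi[\partial_t]\equiv 0$, so the only bulk term is $\partial_t u\, f$, which after integration in $t$ is bounded by $\sup_t E[u](t)^{1/2}\,\|f\|_{L^1_tL^2_x}$. The obstruction is that $\partial_t$ is space-like for $r<2M$, so the fluxes are not coercive down to $r=r_e$. This is repaired by the red-shift vector field of Dafermos--Rodnianski: replace $\partial_t$ by $\partial_t+\chi(r)\,N$, where $N$ is transversal to $\{r=2M\}$ and $\chi$ is supported in $r\le 5M/2$, chosen so that the sum is future timelike on $r_e\le r\le 5M/2$ and so that the additional bulk term $\pi^{\alpha\beta}[\chi N]\,T_{\alpha\beta}[u]$ is nonnegative in a collar of $\{r=2M\}$ (the red-shift effect). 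The only genuinely new error is a bulk term supported in the compact region $r\approx 5M/2$ where $\chi$ transitions, to be absorbed below. This yields $E[u](\Sigma_R^+)+\sup_t E[u](t)\lesssim E[u](t_0)+\sup_t E[u](t)^{1/2}\|f\|_{L^1_tL^2_x}+(\text{compactly supported bulk error})$.

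\emph{Morawetz multiplier.} Take $X=a(\rs)\,\partial_{\rs}$ and $q=q(\rs)$, working in the coordinates \eqref{swm2}, in which the $(t,\rs)$ block of $g_S$ is conformally flat. The structure is most transparent after the substitution $u=v/r$: on the $\ell$-th spherical harmonic $v$ solves $(\partial_t^2-\partial_{\rs}^2+V_\ell)v=-r\weight f$ with the potential $V_\ell=\weight\big(\ell(\ell+1)\,r^{-2}+2M\,r^{-3}\big)$, and the multiplier $a\,\partial_{\rs}v+\tfrac12 a'v$ produces the bulk
\begin{equation*}
\int_{\M_{[t_0,t_1]}}\Big(\,a'(\rs)\,|\partial_{\rs}v|^2 \;+\; a(\rs)\,\weight\,(r-3M)\,\ell(\ell+1)\,r^{-4}\,|v|^2 \;-\;\tfrac14\,a'''(\rs)\,|v|^2\,\Big)\,dt\,d\rs\,d\omega ,
\end{equation*}
plus fluxes controlled by the energies and the inhomogeneity term $\int(a\,\partial_{\rs}v+\tfrac12 a'v)(-r\weight f)$; here $\tfrac{d}{d\rs}\big(\weight\,r^{-2}\big)=2\,\weight\,(3M-r)\,r^{-4}$ has been used. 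Now choose $a$ increasing with $a(\rs)=0$ exactly at $r=3M$, so that $a<0$ for $r<3M$ and $a>0$ for $r>3M$. Then $a'>0$ gives nondegenerate control of $\partial_{\rs}v$, hence of $\partial_r u$ away from the horizon --- matching the unweighted $\partial_r u$ term in \eqref{leS}; and the angular term $a(\rs)\,\weight\,(r-3M)\,\ell(\ell+1)\,r^{-4}\,|v|^2$ is nonnegative and degenerates precisely like $(r-3M)^2\approx(1-\tfrac{3M}{r})^2$ at the photon sphere, which is exactly the weight carried by $\ang u$ in \eqref{leS} and reflects the normal hyperbolicity of $r=3M$. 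The $|v|^2$ terms are handled by a suitable choice of $q$, a Hardy inequality for the $\|r^{-1}u\|_{LE}$ piece, and absorption into the slice energies; the spherically symmetric mode $\ell=0$, which has no angular term, is controlled by the same $a$ together with the $2M\,r^{-3}$ part of $V_0$, the nondegenerate term $a'|\partial_{\rs}v|^2$ being decisive. Finally the degenerate temporal bound $\|(1-\tfrac{3M}{r})\partial_t u\|_{LE}$ is extracted from the equation itself: multiplying $\Box_{g_S}u=f$ by $\chi(r)^2(1-\tfrac{3M}{r})^2 u$, integrating by parts in $t$, and trading $\partial_t^2 u$ for $\partial_{\rs}^2 u$ and $\ang u$ via the equation, bounds it by the already-controlled quantities together with $\|u\|_{LE_S^1}\|f\|_{LE^*_S}+\sup_t E[u](t)$.

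\emph{Patching, ends, and the main obstacle.} Near the horizon $a'|\partial_{\rs}v|^2$ degenerates --- it equals $\weight^2$ times $|\partial_r v|^2$ --- but this is exactly the region where the red-shift bulk term is coercive, so a large multiple of the energy step added to the Morawetz step restores nondegenerate control of $\partial_r u,\partial_t u,\ang u$ on $r_e\le r\le 5M/2$ and simultaneously absorbs the transition error from the energy step and any wrong-sign contribution of the Morawetz bulk there. Near spatial infinity $a$ is taken bounded with $a'(\rs)\gtrsim\langle r\rangle^{-2}$ and $q\sim\langle r\rangle^{-1}$, so that the bulk reproduces the flat local energy norm (with the sharp $\langle r\rangle^{-1/2}$ weight in $LE$, rather than $\langle r\rangle^{-1/2-\epsilon}$, obtained as for \eqref{localenergyflat} by summing a family of truncated multipliers). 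The inhomogeneity is paired by Cauchy--Schwarz: the $\partial_t$-multiplier pairs the $L^1_tL^2_x$ part of $f$ against $\sup_t E[u](t)^{1/2}$, while the Morawetz multiplier $a(\rs)\partial_{\rs}+\dots$ --- using that $a$ itself vanishes like $(1-\tfrac{3M}{r})$ at $r=3M$ --- pairs as $\int|X(u)f|\lesssim\|u\|_{LE_S^1}\|f\|_{LE^*_S}$, which is precisely what forces the weight $(1-\tfrac{3M}{r})^{-1}$ in \eqref{leSstara}; absorbing the $\|u\|_{LE_S^1}$ and $\sup_t E[u](t)^{1/2}$ factors into the left side then gives \eqref{main.est.Schw}. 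The main obstacle throughout is the construction of a single function $a$ (with companion scalar $q$) reconciling all competing demands at once: $a'>0$ everywhere, for nondegenerate control of $\partial_r u$; the exact vanishing and sign change at $r=3M$ dictated by trapping, so the angular --- and, via the equation, temporal --- coefficients degenerate only as $(1-\tfrac{3M}{r})^2$; $a'''$, respectively $\Box_{g_S}q$, of the right sign for the zeroth-order terms; the $\langle r\rangle^{-2}$ decay of $a'$ at infinity to match \eqref{localenergyflat}; and compatibility of the possibly wrong-sign Morawetz bulk near $r=r_e$ with the red-shift positivity. Everything else --- the red-shift energy estimate, the $\ell=0$ mode, the equation-based recovery of $\partial_t u$, the boundary bookkeeping, and the duality pairing with $f$ --- is then routine; the full construction is carried out in \cite{MMTT}.
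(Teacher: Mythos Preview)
Your sketch is correct and hits all the essential mechanisms (positive commutator at infinity, degeneration at the photon sphere, red-shift near the horizon, Hardy for the zeroth-order term, upgrade to the sharp dyadic norm at the end), but it is organized rather differently from the MMTT argument the paper recalls. You separate the proof into an energy/red-shift step with the Dafermos--Rodnianski vector field $\partial_t+\chi N$ and a Morawetz step carried out mode-by-mode after the substitution $u=v/r$, then recover the degenerate $\partial_t u$ control by a separate multiplication by $\chi^2(1-3M/r)^2 u$. The paper instead works entirely in physical space, with no spherical-harmonic decomposition, and packages everything into a single triple $(\tilde X,\tilde q,m)$ in Lemma~\ref{ibp}: the vector field $\tilde X=a(r)(1-2M/r)\partial_r+c(r)K$ already contains a $c(r)K$ piece that produces the $(1-3M/r)^2|\partial_t u|^2$ bulk directly, and the role of your red-shift $N$ is played by a smooth $1$-form $m$ supported near $r=2M$ together with the requirement that $\tilde X(2M)$ point into the black hole. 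The positivity \eqref{posS} of the combined quadratic form $Q[g_S,\tilde X,\tilde q,m]$ is then a single assertion, and integrating with $\tilde X+CK$ gives the weaker estimate \eqref{main.est.Schwa} in the $LE_{S,1}$ norm, upgraded to $LE_S^1$ by the flat-space perturbation argument for large $r$ exactly as you describe. Both routes are valid; yours has the pedagogical advantage of isolating each mechanism, while the MMTT packaging is what the paper actually needs later, since the perturbation argument in Section~4 proceeds by estimating the single difference $Q[g,X,q,m]-Q[g_S,X,q,m]$ for this fixed triple rather than by revisiting each step.
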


We remark that the constant in \eqref{main.est.Schw} does not depend on $t_0$ and $t_1$. In particular we can obtain a global in time estimate, the counterpart for \eqref{localenergyflat} on Schwarzschild backgrounds, if we take $t_0=0$ and $t_1=\infty$.

For black holes, local energy estimates were first proved in \cite{LS} for radially
symmetric Schr\"odinger equations on Schwarzschild backgrounds.  In
\cite{BS1, BS2, BSerrata}, those estimates are extended to allow for
general data.  The same authors, in
\cite{BS3,BS4}, have provided studies that give certain improved
estimates near the photon sphere $r=3M$.  Moreover, we note that
variants of these bounds have played an important role in the works
\cite{BSter} and \cite{DR1}, \cite{DR2} which prove analogues of the
Morawetz conformal estimates on Schwarzschild backgrounds.

 As we will generalize Theorem \ref{Schwarz} to perturbations of Schwarzschild, we recall the key steps in its proof as done in \cite{MMTT}.  We begin with the
energy-momentum tensor
\[
Q_{\alpha\beta}[g]=\partial_\alpha u \partial_\beta u -
\frac{1}{2}g_{\alpha\beta}\partial^\gamma u \partial_\gamma u
\]

Its contraction with respect to a vector field $X$ is denoted by
\[
P_\alpha[g,X]=Q_{\alpha\beta}[g]X^\beta
\]
and its divergence is
\[
\nabla^\alpha P_\alpha[g,X] = \Box_g u \cdot Xu + \frac{1}{2}Q[g, X]
\qquad\text{where}\quad
Q[g, X] = Q_{\alpha\beta}[g]\pi_X^{\alpha \beta}
\]
and $\pi_X^{\alpha \beta} $ is the deformation tensor of $X$, which is given in terms of the Lie derivative by
\[
\pi_{\alpha\beta}^X=\nabla_\alpha X_\beta + \nabla_\beta X_\alpha=({\mathcal L}_X g)_{\alpha\beta}.
\]
A special role is played by the Killing vector field whose deformation tensor is zero
\[
K=\partial_{t} = \partial_{\tt}
\]

In general for a vector field $X$ independent of the metric we compute
\begin{equation}\label{deformation}
\pi^{\alpha\beta}_X=-{\mathcal L}_X g^{\alpha\beta}=-X(g^{\alpha\beta})
+g^{\alpha\gamma} \partial_\gamma X^\beta+g^{\beta\gamma}\partial_\gamma X^\alpha,
\end{equation}
in which case
\begin{equation}\label{defform}
Q[g, X] =-\frac{1}{\sqrt{|g|}}(X(\sqrt{|g|}g^{\alpha\beta}))
\partial_\alpha u\,\partial_\beta u \\
+(g^{\alpha\gamma}\partial_\gamma X^\beta +g^{\beta\gamma}\partial_\gamma X^\alpha)\partial_\alpha u\partial_\beta u- \partial_\gamma X^\gamma \, g^{\alpha\beta}\partial_\alpha u\,\partial_\beta u
\end{equation}

In particular if, in the rectangular coordinates, $X =b(r)\omega^i \partial_i$ then we get
\begin{multline}\label{QpiX2g}
Q[g, X] =-\frac{1}{\sqrt{|g|}}(X(\sqrt{|g|}g^{\alpha\beta}))
\partial_\alpha u\,\partial_\beta u \\
+2\frac{b(r)}{r} \big(g^{\alpha j}-g^{\alpha i}\omega_i \omega^j\big) \partial_\alpha u\partial_j u
+2 b^\prime(r) g^{\alpha r}\partial_r u\,\partial_\alpha u- \frac{1}{r^2} \partial_r(r^2 b(r)) \, g^{\alpha\beta}\partial_\alpha u\,\partial_\beta u,
\end{multline}
where latin indices are summed over $i,j=1,2,3$ only, $\omega^j=\omega_j$ and $g^{\alpha r}=g^{\alpha i}\omega_i$. Written in polar coordinates, the same expression takes the simpler form
\begin{equation}\label{QpiX2gpolar}
Q[g, X] =-\frac{1}{\sqrt{|g|}}(X(\sqrt{|g|}g^{\alpha\beta}))
\partial_\alpha u\,\partial_\beta u +2 b^\prime(r) g^{\alpha r}\partial_r u\,\partial_\alpha u- b^\prime(r) \, g^{\alpha\beta}\partial_\alpha u\,\partial_\beta u
\end{equation}
 If $g=g_S$ is the Schwarzschild metric then $\det{g_S}=-1$ and $g_S^{00}=-\big(1-\frac{2M}{r}\big)^{-1}$ so with $b(r)=a(r)\big(1-\frac{2M}{r}\big)$
\begin{multline}\label{QpiX2}
Q[g_S, X] =-b(r)\big(1-\frac{2M}{r}\big)^{-1}\partial_r \Big( \big(1-\frac{2M}{r}\big)g_S^{ij}\Big)
\partial_i u\,\partial_j u \\
+2\frac{b(r)}{r} \big(g_S^{i j}-g_S^{r r} \omega^i\omega^j\big) \partial_i u\partial_j u
+2 b^\prime(r) g_S^{r r}\partial_r u\,\partial_r u\\
- \frac{1}{r^2} \big(1-\frac{2M}{r}\big)\partial_r
\Big(r^2 \big(1-\frac{2M}{r}\big)^{-1} b(r)\Big) \, g_S^{\alpha\beta}\partial_\alpha u\,\partial_\beta u\\
=\frac{2b(r)}{r}\Big(1-\frac{3M}{r}\Big) \big(1-\frac{2M}{r}\big)^{-1}|\ang u|^2
+\Big(2b^\prime(r)\big(1-\frac{2M}{r}\big)-b(r)\frac{4M}{r^2}\Big) (\partial_r u)^2
\\
- \frac{1}{r^2} \big(1-\frac{2M}{r}\big)\partial_r
\Big(r^2 \big(1-\frac{2M}{r}\big)^{-1} b(r)\Big) \, g_S^{\alpha\beta}\partial_\alpha u\,\partial_\beta u\\
=2 \frac{a(r)}{r} \Big(1-\frac{3M}{r}\Big)|\ang u|^2+2 a^\prime(r)\big(1-\frac{2M}{r}\big)^2 (\partial_r u)^2
- \frac{1}{r^2} \big(1-\frac{2M}{r}\big)\partial_r
\Big(r^2 a(r)\Big) \, g_S^{\alpha\beta}\partial_\alpha u\,\partial_\beta u
\end{multline}

Integrating the above divergence relation for a suitable choice of $X$
does not suffice in order to prove the local energy estimates, as in
general the deformation tensor can only be made positive modulo a
Lagrangian term $q \partial^\alpha u \partial_\alpha u $.
Hence some lower order corrections are required.  For a vector field
$X$, a scalar function $q$ and a 1-form $m$ we define
\[
P_\alpha[g,X,q,m] = P_\alpha[g,X] + q u \partial_\alpha u - \frac12
(\partial_\alpha q) u^2 + \frac{1}{2}m_{\alpha}u^2.
\]
The divergence formula gives
\begin{equation}
\nabla^\alpha P_\alpha[g,X,q,m] =  \Box_g u \Bigl(Xu +
 q u\Bigr)+ Q[g,X,q,m],
\label{div}\end{equation}
where
\[
 Q[g,X,q,m] =
\frac{1}{2}Q[g, X] + q
\partial^\alpha u\, \partial_\alpha u + m_\alpha u\,
\partial^\alpha u + (\nabla^\alpha m_\alpha -\frac{1}{2}
\nabla^\alpha \partial_\alpha q) \, u^2.
\]

To prove the local energy decay in Schwarzschild space-time,
$X$, $q$ and $m$ are chosen as in the following lemma from \cite{MMTT}:
\begin{lemma}
There exist a smooth vector field
\[
 \tilde X=a(r)\big(1-\frac{2M}{r}\big)\partial_{r} + c(r)K
\]
a smooth function $\tilde q(r)$ and a smooth $1$-form $m$ supported near the
event horizon $r=2M$ so that

(i) The quadratic form $Q[g_S,\tilde X,\tilde q,m]$
is positive definite,
\begin{equation}
Q[g_S,\tilde X,\tilde q,m] \gtrsim r^{-2} |\partial_r u|^2 + \big(1-\frac{3M}r
\big)^2 (r^{-2} |\partial_t u|^2 + r^{-1}|\ang u|^2) + r^{-4}
u^2.
\label{posS}\end{equation}

(ii) $\tilde X(2M)$ points toward the black hole, $\tilde X(dr)(2M) < 0$, and
$\langle m,dr\rangle(2M) > 0$.

\label{ibp}
\end{lemma}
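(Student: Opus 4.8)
\medskip
\noindent\emph{Proof plan.}
This is the construction of \cite{MMTT}; here is the structure. The plan is to build $\tilde X$, $\tilde q$ and $m$ piece by piece in $r$, starting from the explicit identity \eqref{QpiX2} for the purely radial part $X_0=a(r)\weight\partial_r$ and then adding the lower order corrections $\tilde q$, $m$ and the timelike component $c(r)K$ exactly where \eqref{QpiX2} fails to be positive: near spatial infinity (where one needs the standard Morawetz multiplier), at the photon sphere $r=3M$ (where the angular coefficient in \eqref{QpiX2} degenerates --- the real obstruction), and at the horizon $r=2M$ (where $\weight\to0$ and one needs a redshift vector field). Note $Q[g_S,\tilde X]=Q[g_S,X_0]+Q[g_S,c(r)K]$, so \eqref{QpiX2} gives the first summand directly.

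First I would fix the radial weight and the correction $\tilde q$ away from the horizon, say on $r\geq\tfrac52M$, where the Schwarzschild metric is static and given by \eqref{swm}. Choose $a$ smooth and increasing with $a'(r)\gtrsim\langle r\rangle^{-2}$, a simple zero at $r=3M$ with $a'(3M)$ above a fixed threshold, and $a$ bounded as $r\to\infty$. Then in \eqref{QpiX2} the coefficient $2\frac{a}{r}\big(1-\frac{3M}{r}\big)$ of $|\ang u|^2$ is $\geq0$, vanishing to second order only at $r=3M$, and the coefficient $2a'\weight^2$ of $(\partial_r u)^2$ is positive, and $\gtrsim\langle r\rangle^{-2}$ for large $r$. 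The only remaining term in \eqref{QpiX2} is the Lagrangian term $-\frac1{r^2}\weight\,\partial_r(r^2a)\,g_S^{\alpha\beta}\partial_\alpha u\,\partial_\beta u$, which is sign-indefinite since, for $r>\tfrac52M$, $g_S^{\alpha\beta}\partial_\alpha u\,\partial_\beta u=-\weight^{-1}(\partial_t u)^2+\weight(\partial_r u)^2+|\ang u|^2$ is a Lorentzian quadratic form in $\partial u$. As the coefficient of $g_S^{\alpha\beta}\partial_\alpha u\,\partial_\beta u$ in $Q[g_S,\tilde X,\tilde q,m]$ is $\tilde q-\frac1{2r^2}\weight\,\partial_r(r^2a)$, I would choose $\tilde q(r)$ so that (a) near $r=3M$ this equals $-\delta\,r^{-2}\big(1-\frac{3M}{r}\big)^2$ for a small fixed $\delta$ --- producing a positive $(\partial_t u)^2$ term of the required degenerate weight, while the negative contributions it makes to the $(\partial_r u)^2$ and $|\ang u|^2$ terms are swamped by $2a'\weight^2$ and by $2\frac{a}{r}\big(1-\frac{3M}{r}\big)$ once $\delta$ is small and $a'(3M)$ is past the threshold --- and (b) away from the horizon $-\tfrac12\Box_{g_S}\tilde q\gtrsim r^{-4}$, which supplies the term $r^{-4}u^2$ of \eqref{posS} (compatible with (a): e.g.\ $-\Box_{g_S}(\tfrac1r)=2Mr^{-4}>0$).

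Next I would handle the horizon. For $r$ near $2M$ the radial multiplier alone does not control $\partial_t u$, the derivative transverse to the horizon, with good weights, and the relevant terms of \eqref{QpiX2} carry negative powers of $\weight$. I would pass to the ingoing coordinates $(v,r,\omega)$, in which $K=\partial_v$ and the metric is regular, cancel the $\weight^{-1}$--singular pieces with $\tilde q$, and restore positivity near $r=2M$ with the redshift vector field $c(r)K$, with $c$ supported near the horizon and monotone, whose contribution to \eqref{defform} is then positive definite there with non-degenerate weights. The leftover zeroth order terms I would absorb with $\tilde q$ and the $1$-form $m$, both supported near $r=2M$, $m$ entering through $\nabla^\alpha m_\alpha\,u^2$ in \eqref{div}. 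I would normalize $a$, $c$ and $m$ near $r=2M$ so that $\tilde X(dr)(2M)<0$ and $\langle m,dr\rangle(2M)>0$; these are exactly the signs needed so that, when \eqref{div} is later integrated over $\M_{[t_0,t_1]}$, the flux through the exit surface $\Sigma_R^+=\{r=r_e\}$ enters as the controlled term $+E[u](\Sigma_R^+)$ on the favourable side rather than as an uncontrolled term.

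Finally I would glue the three pieces with a partition of unity in $r$. The overlap zones --- near $r=\tfrac52M$ and a shell just outside the redshift region --- contain no trapping and leave enough room to interpolate without spoiling positivity, so that part is routine. The hard part is the photon sphere: trapped null geodesics at $r=3M$ force the degeneracy $\big(1-\frac{3M}{r}\big)^2$ on the $\partial_t$ and angular terms and cannot be removed, so the delicate point is precisely the simultaneous balance above --- keeping the $|\ang u|^2$ coefficient $\gtrsim r^{-1}\big(1-\frac{3M}{r}\big)^2$ positive (this needs the simple zero of $a$ with $a'(3M)$ large enough), keeping the $(\partial_r u)^2$ coefficient positive (easy, since $\weight(3M)=\tfrac13$), and extracting a $(\partial_t u)^2$ term of the same weight from the $\tilde q$-correction to the Lagrangian term. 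Assembling the resulting bounds yields \eqref{posS}, and part (ii) holds by the normalization at the horizon.
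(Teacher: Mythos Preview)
The paper does not prove this lemma; it is quoted verbatim from \cite{MMTT}, and your outline is precisely the MMTT construction that the paper invokes. The paper does record some of the explicit choices later (around \eqref{tXdef}): $a$ smooth and bounded with $a'(r)\approx r^{-2}$ and $a(3M)=0$, and
\[
\tilde q(r)=\tfrac12\weight\,r^{-2}\partial_r\bigl(r^2 a(r)\bigr)+\delta_1\,\chi_{\{r>5M/2\}}\,(r-3M)^2 r^{-4},
\]
so that the first summand exactly cancels the Lagrangian term in \eqref{QpiX2} and the second reinstates the degenerate $(1-3M/r)^2$ weight --- matching your step (a). One small correction to your horizon discussion: in the paper's more detailed decomposition $\tilde X=X_1+X_2+CK$ the redshift near $r=2M$ is carried by a separate vector field $X_2$ supported in $r<\tfrac52M$ (together with the $1$-form $m$), not by $c(r)K$ alone; since $K$ is Killing, $c(r)K$ by itself only produces the cross term $2c'(r)g_S^{\alpha r}\partial_\alpha u\,\partial_t u$, which is not positive definite. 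Your instinct to pass to ingoing coordinates and use a redshift multiplier there is correct, but the resulting piece is an additional $\partial_r$-type vector field near the horizon rather than a multiple of $K$.
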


The local energy estimate is obtained by integrating the divergence
relation \eqref{div} with $\tilde X+CK$ instead of $\tilde X$, where $C$ is a large
constant,  on the domain $\M_{[t_0, t_1]}$, with respect to the measure induced by the metric, $dV_S = r^2 dr
d\omega dt$.  This yields
\begin{equation}
\int_{\M_{[t_0, t_1]}} Q[g_S,\tilde X,\tilde q,m]  dV_S =
- \int_{\M_{[t_0, t_1]}} \Box_{g_S} u \Bigl((\tilde X+CK)u + \tilde q u\Bigr) dV_S
+ BDR^S[u]
\label{intdiv}\end{equation}
where $BDR^S[u]$ denotes the boundary terms
\[
BDR^S[u] = \int \langle dt, P[g_S,\tilde X + C K,\tilde q,m]\rangle
  r^2 dr  d\omega \Big|_{t = t_0}^{t = t_1}
 \!\!  - \!  \int \langle dr,  P[g_S,\tilde X + C K,\tilde q,m]\rangle
  r_e^2 dt  d\omega
\]
Using the condition (ii) in the Lemma and Hardy type inequalities,
it is shown in \cite{MMTT} that for large  $C$ and $r_e$ close
to $2M$ the boundary terms have the correct sign,
\begin{equation}
  BDR^S[u]  \leq c_1 E[u](t_0) -
c_2 (E[u](t_1) + E[u](\Sigma_R^+)), \qquad c_1, c_2 > 0
\label{bdrpos}\end{equation}

We will use the notation $\chi_{R}(r)$ to denote a smooth nondecreasing cutoff function supported in $\{r>R\}$ so that $\chi \equiv 1$ in $\{r>2R\}$. For technical reasons we define, for any $m>0$ and a large enough constant $R_1$, the dual weighted norms $LE_{S,m}$ and $LE_{S,m}^*$ by
\[
\| u\|_{LE_{S,m}} = \|(1- \chi_{R_1}) u\|_{LE_S^1} + \| \chi_{R_1} r^{-1/2-m} \partial u\|_{L^2L^2} + \| \chi_{R_1} r^{-3/2-m} u\|_{L^2L^2}
\]
\[
\| F\|_{LE_{S,m}^*} = \|(1- \chi_{R_1}) F\|_{LE_S^*} + \| \chi_{R_1} r^{1/2+m} F\|_{L^2L^2}
\]

By applying the Cauchy-Schwartz inequality
for the first term on the right of \eqref{intdiv} we obtain
a slightly weaker form of the local energy estimate
\eqref{main.est.Schw}, namely
\begin{equation}\label{main.est.Schwa}
E[u](\Sigma_R^+) + {\sup}_{t_0\leq t\leq t_1} E[u](t) + \|u\|_{LE_{S,1}}^2
\lesssim E[u](\Sigma_R^-) + \|f\|_{LE^*_{S,1}}^2.
\end{equation}

These norms are equivalent with the stronger norms $LE_S^1$,
respectively $LE^*_S$ for $r$ in a bounded set. On the other hand for
large $r$ the Schwarzschild space can be viewed as a small perturbation
of the Minkovski space. Thus the transition from
\eqref{main.est.Schwa} to \eqref{main.est.Schw} is achieved in
\cite{MMTT} by cutting away a bounded region and then using a
perturbation of a Minkowski space estimate.

For our purposes, it will be useful to slightly modify $\tilde X$ near infinity in order to improve the $LE_{S,1}$ norm in \eqref{main.est.Schwa} to a $LE_{S,\delta}$ norm for some $\delta>0$ small enough. Let $R_1$ be large, and define
\[
 X_4 = \chi_{R_1} f(r) \partial_r  ,\qquad q_2(r) =  \chi_{R_1} \frac{f(r)}r
\]
 We pick the function $f$ to satisfy the conditions (for $r\geq R_1$):
\[
f'(r) \approx r^{-1-\delta}, \qquad \frac{f(r)}r - \frac12 f'(r) \gtrsim r^{-1-\delta}, \qquad -\Delta\Bigl(\frac{f(r)}r\Bigr) \gtrsim r^{-3-\delta}
\]
 One could take, for example,
 \begin{equation}\label{imprvinfty}
 f(r) = {\sum}_{j\geq 0} 2^{-\delta j} \frac{r}{r+2^j}
 \end{equation}
 By Proposition 8 in \cite{MT}, one has that
\[
Q[g_S, f(r) \partial_r, \frac{f(r)}r, 0] \gtrsim r^{-1-\delta} |\partial u|^2 + r^{-3-\delta} u^2, \qquad r\geq R_1
\]
 Since the derivative of $\chi_{R_1}$ is supported in the region $r\approx R_1$ and bounded by $r^{-1}$, we obtain
\begin{equation}\label{X4err}
  \int_{\M_{[t_0,t_1]}}  Q[g_S, X_4, q_2, 0] dV_S \gtrsim
\| \chi_{R_1} u\|_{LE_{S,\delta}}^2 - \int_{\M_{[t_0,t_1]}}\int_{r\approx R_1} r^{-1-\delta} |\partial u|^2 dV_S
\end{equation}

The boundary terms satisfy
\[
\Bigl |P_{\alpha}\Bigr | \lesssim |\partial u|^2 + \frac{1}{r^2} u^2
\]
on the time slices $t= t_i$, $i=0,1$. Note that, after integrating in space, the second term on the right can be controlled by the first by Hardy's inequality.

 Let
\begin{equation}\label{Xdef}
X = \tilde X + CK + \delta_2 X_4, \qquad q(r) = \tilde q(r) + \delta_2 q_2(r)
\end{equation}
 for some $\delta_2 \ll 1$ very small. The last term in \eqref{X4err} can now be absorbed in $Q[g_S, \tilde X, \tilde q, m]$. We now get from the inequalities above
 \begin{equation}\label{intdivs2}
    \int_{\M_{[t_0,t_1]}}  Q[g_S, X, q, m] dV_S=
    - \int_{\M_{[t_0,t_1]}} \Box_{\mathbf S} u \cdot  Xu \  dV_{\mathbf S}
    - \left. BDR^{\mathbf S}[u]\right|_{t = t_0}^{t = t_1} - \left. BDR^{\mathbf S}[u] \right|_{r=r_e}
  \end{equation}
  with
 \begin{equation}\label{imprvinftynorm}
  \int_{\M_{[t_0,t_1]}}  Q[g_S, X, q, m] dt dx\gtrsim
\| u\|_{LE_{S,\delta}}^2
  \end{equation}
and the boundary terms satisfying \eqref{bdrpos}. We thus obtain, after applying Cauchy-Schwarz:
\begin{equation}\label{Schwimpr}
E[u](\Sigma_R^+) + \sup_{t_0\leq t\leq t_1} E[u](t) + \|u\|_{LE_{S,\delta}}^2
\lesssim E[u](\Sigma_R^-) + \|f\|_{LE^*_{S,\delta}}^2.
\end{equation}

%We will use the vector field $X$ in Section 4.

\newsection{Main Theorem}
 In this section we will give a precise version of our main theorem and outline the boot strap argument.

Let us start with some notation. We will use the coordinates $(\tt, x^{i})$, where $x^i =  r\omega$,
However, since the event horizon plays little role in our analysis, we will slightly abuse notation and denote by $\tt$ by $t$. For large $r$  the coordinates $x^{*i} =  \rs\omega$, may have better adapted to study perturbations of the Schwarzschild metric but it makes little difference since there we will use a null frame, see below.

Let $\tilde r$ denote a smooth strictly increasing function (of $r$) that equals $r$ for $r\leq R_1$ and $\rs$ for $r\geq 2R_1$, where $R_1 >>  6M$.  We will use throughout the paper the notation
\[
\M_{[t_0,t_1]}^{[r_1, r_2)} := \M_{[t_0,t_1]}\cap \{r_1\leq\tilde r< r_2\}
\]

Our favorite sets of vector fields will be
\[
\partial = \{ \partial_t, \partial_i\}, \qquad \Omega = \{x^i \partial_j -
x^j \partial_i\}, \qquad S = t \partial_t + \tilde r \partial_{\tilde r},
\]
namely the generators of translations, rotations and scaling. We set
$Z = \{ \partial,\Omega,S\}$.

For a triplet $\Lambda = (i,j,k)$ of multi-indices $i$, $j$ and $k$ we
denote $|\Lambda| = |i|+3|j|+3k$ and
\[
u_{\Lambda} = \partial^i \Omega^j S^k u, \qquad u_{\leq m} =
(u_{\Lambda})_{|\Lambda|\le m}.
\]

Since derivatives will play a special role, we will also use the notation
\[
\partial_{\leq m} u = (\partial_{\alpha} u)_{|\alpha|\leq m}.
\]

For two triplets $\Lambda_1$, $\Lambda_2$ we say that $\Lambda_1 \leq \Lambda_2$ if
\[
i_1 \leq i_2, \qquad j_1\leq j_2, \qquad k_1\leq k_2,
\]
and $\Lambda_1 < \Lambda_2$ if at least one of the inequalities above is strict.

For a positive function $f(t,r)$, we define the classes $S^Z(f)$ of
functions in $\R^+ \times \R^3$ by
\[
a \in S^Z(f) \Longleftrightarrow |Z^j a(t, r\omega)| \leq c_{j} f(t, r), \quad j \geq 0.
\]

 Let $h^{\alpha\beta}:= g^{\alpha\beta}-g_S^{\alpha\beta}$ be the difference in the metric coefficients. We will allow the metric $g$ to depend on the solution $u$, so that the difference in the metric coefficients $h^{\alpha\beta}(t, x, u)$ are smooth functions satisfying
\begin{equation}\label{metricform}
h^{\alpha\beta}(t, x, u) = H^{\alpha\beta}(t,x)u + O(u^2).
\end{equation}
Since we want $h\approx u$ we want the functions $H^{\alpha\beta}$ to satisfy
\begin{equation}\label{metriccoeff}
H^{\alpha\beta}\in S^Z(1).
\end{equation}

 For the derivatives of $H$ we need to impose extra conditions to make the metric satisfy the conditions in Theorem~\ref{LE}, namely
\begin{equation}\label{derivcoeff}
\partial H^{\alpha\beta}\in S^Z\Bigl(\frac{t^{1+\delta}}{r^{1+\delta}\la t-\rs\ra^{1/2+\delta}}\Bigr), \qquad  \rs\geq R_1^*,
\end{equation}
where $R_1^*=r^*(R_1)$.
 We remark that a natural condition to impose on $\partial H^{\alpha\beta}$ is
\[
 \partial H^{\alpha\beta}\in S^Z\Bigl(\frac{t}{r\la t-\rs\ra}\Bigr)
\]
as this yields that $\partial h\approx \partial u$. However, we chose to instead work with the weakest possible assumption under which we can prove our result, which is \eqref{derivcoeff}.

 Let $N$ be a large enough number. Let $N_1\!=\!\frac{N}2\! +\!2$. We assume that the function $u$ satisfies the decay rates
\begin{equation}\label{u-decay}
|u_{\leq N_1}|\lesssim \frac{\epsilon\la t-\rs\ra^{1/2}}{\la t\ra}, \qquad |\partial u_{\leq N_1}| \lesssim \frac{\epsilon}{\la r\ra\la t-\rs\ra^{1/2}}.
\end{equation}
 Combining \eqref{u-decay} and \eqref{metriccoeff}-\eqref{derivcoeff} we obtain decay rates for $h^{\alpha\beta}$ compatible with the  assumptions in Theorem~\ref{LE}:
\begin{equation}\label{metricdecay}
\begin{split}
|h^{\alpha\beta}_{\leq N_1}|\lesssim \frac{\epsilon\la t-\rs\ra^{1/2}}{\la t\ra}, \\
|\partial h^{\alpha\beta}_{\leq N_1}| \lesssim \frac{\epsilon}{\la t\ra^{1/2}}, \qquad r_e\leq r\leq R_1 ,\\
|\partial h^{\alpha\beta}_{\leq N_1}| \lesssim \frac{\epsilon}{r^{1+\delta}}, \qquad R_1^* \leq \rs\leq \frac{t}2 ,\\
|\partial h^{\alpha\beta}_{\leq N_1}| \lesssim \frac{\epsilon}{\la t\ra\la t-\rs\ra^{\delta}}, \qquad \frac{t}2\leq \rs.
\end{split}\end{equation}

 Moreover, due to \eqref{metricform}, \eqref{metriccoeff}, \eqref{derivcoeff}, and \eqref{u-decay} we obtain
\begin{equation}\label{metricvsu}
\begin{split}
& |h^{\alpha\beta}_{\Lambda}|\lesssim |u_{\leq \Lambda}| \\
& |\partial h^{\alpha\beta}_{\Lambda}|\lesssim |\partial u_{\leq \Lambda}| + \frac{t^{1+\delta}}{r^{1+\delta}\la t-\rs\ra^{1/2+\delta}} |u_{\leq \Lambda}|, \quad R_1^* \leq \rs
\end{split}\end{equation}
 We also note that, since $\partial_t = \frac1t S - \frac{\tilde{r}}t\partial_{\tilde r}$, we have better estimates for the time derivative in the region $\rs\leq \frac{t}2$:
\begin{equation}\label{t-derivest}
\begin{split}
|\partial_t u_{\leq N_1-2}| \lesssim \frac{\epsilon}{\la t\ra^{3/2}} ,\\
|\partial_t h^{\alpha\beta}_{\leq N_1-2}| \lesssim \frac{\epsilon}{\la t\ra^{3/2}}, \qquad r_e\leq r\leq R_1 ,\\
|\partial_t h^{\alpha\beta}_{\leq N_1-2}| \lesssim \frac{\epsilon}{\la r\ra^{\delta}\la t\ra}, \qquad R_1^* \leq \rs\leq \frac{t}2 ,
\end{split}\end{equation}

In addition we will need two technical conditions near the trapped set and the light cone. Let us define
\begin{equation}\label{W-def}
W^{\beta} =  \frac{1}{\sqrt{|g_S|}}\partial_{\alpha} (g_S^{\alpha\beta}\sqrt{|g_S|}) - \frac{1}{\sqrt{|g|}}\partial_{\alpha} (g^{\alpha\beta}\sqrt{g}).
\end{equation}

A priori, near the trapped set $W$ satisfies the bound (see Lemma~\ref{wavecoordscommut})
\begin{equation}\label{W-estfar}
|W_{\Lambda}| \lesssim |\partial h_{\Lambda}| + |h_{\leq \Lambda}| \lesssim |H||\partial u_{\Lambda}| + |u_{\leq \Lambda}|, \qquad |\Lambda|\leq N
\end{equation}
which for the highest order term will not suffice to close the estimates under the assumption \eqref{metriccoeff}. We will thus impose the extra assumption that
\begin{equation}\label{W-est}
|W_{\Lambda}| \lesssim \la t\ra^{-1/2} |\partial u_{\Lambda}| + |u_{\leq \Lambda}|, \qquad |\Lambda|\leq N,\qquad \text{when}\quad \tfrac{5M}2 \leq r \leq \tfrac{7M}2.
\end{equation}

One way to make sure this condition is satisfied is to assume, for example, that
\[
|H_{\leq N}| \lesssim r^{1/2}\la t\ra^{-1/2}
\]
near the trapped set. Indeed, the condition above clearly implies \eqref{W-est}. We remark that in the context of Einstein's Equations written down in generalized wave  coordinates $W = 0$, so good behavior of $W$ can be expected.

On the other hand, in the region close to the cone $\rs\sim t$, $\rs>\frac{t}2$ we need to assume
additional decay for the components of $h$ multiplying the worst decaying derivatives.
To formulate this we need to express $h$ in a nullframe:
\begin{equation}
\uL=\partial_t - \partial_{\rs}, \quad L= \partial_t + \partial_{\rs},
\quad A=A^i(\omega)\partial_i,\quad B=B^i(\omega)\partial_i,\quad
\partial_{\rs} =\big(1-\frac{2M}{r}\big)\omega^i\partial_i,
\label{nullframe}
\end{equation}
in the Schwarzschild metric $g_S$. Here $\uL$ and $L$ are null vectors
\[
g_S(L,L)=g_S(\uL,\uL)=0,\qquad g_S(L,\uL)=-2\big(1-\frac{2M}{r}\big),
\]
and $A$ and $B$
 are  two orthonormal vectors
 \[
g_S(A,A)=g_S(B,B)=1,\qquad g_S(A,B)=0,
 \]
 tangential to the spheres where $r$ is constant:
 \[
 g_S(L,B)=g_S(L,A)=g_S(\uL,B)=g_S(\uL,A)=0.
 \]

  Expanding $h$ in the nullframe
\begin{multline}
h^{\alpha\beta}=h^{\uL\uL}{\uL}^\alpha{\uL}^\beta
+ \sum_{T\in \mathcal{T}} h^{\uL T} ({\uL}^\alpha T^\beta +T^\alpha {\uL}^\beta)
+\sum_{U,T\in \mathcal{T}} h^{UT} U^\alpha T^\beta\\
=h^{\uL\uL}{\uL}^\alpha{\uL}^\beta
+\sum_{T\in \mathcal{T}} h^{\uL T} T^\alpha {\uL}^\beta
+\sum_{T\in \mathcal{T}} h^{\alpha T} T^\beta,
\qquad\text{where}\quad \mathcal{T}=\{L,A,B\},\label{nullframeexpansion}
\end{multline}
we see that we need to assume additional decay on $h^{\uL\uL}$.
We note that the coefficients in the nullframe expansion can be determined from
$h$ applied to the dual vectors with respect to the Schwarzschild metric
\begin{equation}
U_\alpha=g_{S\alpha\beta} U^\beta,\qquad U^\alpha V_\alpha=g_S(U,V).\label{loowering}
\end{equation}
A calculation shows that $A_i=A^i$, $B_i=B^i$, and
 \begin{equation}
 L_0=-\Big(1-\frac{2M}{r}\Big),\quad  L_i=\omega_i,\qquad
 \uL_0=-\Big(1-\frac{2M}{r}\Big),\quad  \uL_i=-\omega_i.\label{lowernull}.
 \end{equation}
With the notation
\begin{equation}
h_{\alpha\beta} = g_{S\alpha\gamma}g_{S\alpha\delta} h^{\gamma\delta},
\qquad h(U,V)=h_{\alpha\beta}U^\alpha V^\beta=h^{\alpha\beta}U_\alpha V_\beta,
\end{equation}
we have in particular
\begin{equation}
h^{\uL\uL}=h(L,L)/g_S(L,\uL)^2.\label{huLuLcomponent}
\end{equation}

We need to assume that $h^{\lL\lL}$ decays at a faster rate like in \eqref{badpart}
because it is the coefficient multiplying the  second derivative transversal to the
light cones that has the least decay.
More explicitly, we assume that it satisfies the decay estimates
\begin{equation}\label{badpart2}
|h^{\lL\lL}_{\Lambda}|\lesssim \la t-\rs\ra^{\delta} \la t \ra^{-\delta}|u_{\leq \Lambda}|, \qquad |\partial h^{\lL\lL}_{\Lambda} | \lesssim \la t-\rs\ra^{\delta}\la t \ra^{-\delta}\Bigl(|\partial u_{\leq \Lambda}| + \la t-\rs\ra^{-1/2} |u_{\leq \Lambda}| \Bigr)
\end{equation}
for some small $\delta>0$ and all $|\Lambda|\leq N$.

 Again, in the context of Einstein's Equations in wave  coordinates, we expect \eqref{badpart2} to hold (see \cite{LR2,Lin3}). Here it follows from the following assumption on
 $H$:
 \begin{equation}\label{nullcond}
|H^{\lL\lL}_{\leq N}|\lesssim \la t-\rs\ra^{\delta}\la t \ra^{-\delta},\qquad |\partial  H^{\lL\lL}_{\leq N} |\lesssim \la t-\rs\ra^{\delta}\la t\ra^{-\delta} \la t-\rs\ra^{-1/2}
 \end{equation}

The metric coefficient $h^{\lL \lL}$ is in front of the derivative with the least decay
$\partial_{\underline{L}}^2 u$. In \cite{LR2,Lin3}) it was proven that for Einstein's equations in wave coordinates
\begin{equation}
|Z^I h^{\lL \lL}|\leq \frac{C\varepsilon}{1+t+r} \Big(\frac{1+|t-r^*|}{1+t}\Big)^\gamma,\label{einsteinwave}
\end{equation}
if initial data are asymptotically flat, i.e. $h\big|_{t=0}=M/r+O\big(r^{-1-\gamma}\big)$,
$0\!<\!\gamma\!<\!1$. Our method here works for the case corresponding to any small $\gamma>0$; if one assumes more decay of the coefficient
$|H^{\lL\lL}|\leq C\varepsilon \langle t-r^*\rangle^\gamma \langle t\rangle^{-\gamma}$ corresponding to larger $\gamma$ it may be possible to prove some additional decay for the solution in the interior.

 We are now ready to state the our main result. We pick a large enough integer $N\geq 36$, and define
\[
\mathcal{E}_N(t) = \|\partial  u_{\leq N}\|_{L^{\infty}[0, t] L^2}^2 + \|u_{\leq N}\|_{LE_S^1[0, t]}^2
\]

\begin{theorem}\label{mainthm}

Assume that the metric $g$ is like in \eqref{quasmetric}, and satisfies the extra conditions \eqref{metriccoeff}, \eqref{derivcoeff}, \eqref{W-est} and \eqref{nullcond}. Then there exists a global classical solution to \eqref{maineeqn}, provided that the initial data is smooth, compactly supported and satisfies, for a certain $\epsilon_0 \ll 1$,
\[
\mathcal{E}_N(0) \leq \epsilon_0^2
\]
Moreover, the solution satisfies the estimates \eqref{enapbds} and \eqref{ptwseapbds} below.

\end{theorem}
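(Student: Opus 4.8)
The plan is to run a continuity (bootstrap) argument in the energy $\mathcal{E}_N(t)$. Assume that on a maximal interval $[0,T)$ one has the bootstrap bound $\mathcal{E}_N(t)\leq C_1\epsilon^2$ together with the pointwise consequences \eqref{u-decay}, \eqref{t-derivest} and the resulting bounds \eqref{metricdecay}--\eqref{t-derivest} on $h$. Under these assumptions the metric $g$ becomes a genuine perturbation of $g_S$ satisfying the hypotheses of the main linear local energy estimate (Theorem~\ref{LE} in Section~4), whose exact statement we invoke. The core of the argument is then to feed the equation $\Box_g u_{\leq N}=F_{\leq N}$ into that estimate, where $F_{\leq N}$ collects the commutator terms produced by applying the vector fields in $Z$ to $\Box_g u=0$; these terms are estimated in Section~5 (and, for the lowest orders, refined in Section~7), and the point is that the structural conditions \eqref{metriccoeff}, \eqref{derivcoeff}, \eqref{W-est} and \eqref{nullcond} make $\|F_{\leq N}\|_{LE^*_{S}+L^1L^2}$ controllable.

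\textbf{Key steps, in order.} First I would set up the bootstrap: fix $N\geq 36$, $N_1=N/2+2$, choose $\epsilon$ with $\epsilon_0\ll\epsilon\ll 1$, and assume $\mathcal{E}_N(t)\leq C_1\epsilon^2$ on $[0,T)$. Second, derive the pointwise decay \eqref{u-decay} and \eqref{t-derivest} for $u_{\leq N_1}$ from the energy bound via the local-energy-to-pointwise-decay mechanism of Section~6 (Klainerman--Sobolev type inequalities adapted to the $S^Z$ classes and the $LE_S^1$ norm); this is where the loss $N_1\approx N/2$ comes from. Third, translate these into the decay bounds \eqref{metricdecay}--\eqref{metricvsu}, \eqref{badpart2} for $h^{\alpha\beta}$ and $h^{\lL\lL}$ using \eqref{metricform}, \eqref{metriccoeff}, \eqref{derivcoeff} and \eqref{nullcond}, and check that the hypotheses of Theorem~\ref{LE} are met, in particular near the photonsphere where \eqref{W-est} is needed and near the cone where \eqref{badpart2}/\eqref{nullcond} supply the extra decay on the $\partial_{\lL}^2 u$ coefficient. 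Fourth, commute: apply $Z^\Lambda$ for $|\Lambda|\leq N$ to $\Box_g u=0$, write $\Box_g u_\Lambda = \sum_{\Lambda'<\Lambda}(\text{terms in } h_{\leq\Lambda},\partial h_{\leq\Lambda}, u_{\leq\Lambda})$ plus the $W$-term, and bound the right-hand side in $LE^*_{S,\delta}+L^1L^2$ using the decay rates just obtained together with the bootstrap energy; the worst contributions (highest-order $\partial h$ near trapping, and $h^{\lL\lL}\partial_{\lL}^2 u$ near the cone) are exactly the ones the extra hypotheses were engineered to tame. Fifth, apply Theorem~\ref{LE} to close: $\mathcal{E}_N(t)\lesssim \mathcal{E}_N(0) + (\text{small})\cdot\mathcal{E}_N(t)^{?}$, absorb, and obtain $\mathcal{E}_N(t)\leq C_2\epsilon_0^2 + C_3\epsilon\,\mathcal{E}_N(t)$, hence $\mathcal{E}_N(t)\leq \tfrac12 C_1\epsilon^2$, strictly improving the bootstrap. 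Sixth, conclude $T=\infty$ and thus global existence by the standard continuation criterion; record the final bounds as \eqref{enapbds}, \eqref{ptwseapbds}.

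\textbf{Main obstacle.} The hard part is the commutator/trapping interaction in Step four: near $r=3M$ the linear estimate of Theorem~\ref{LE} is only borderline (the degenerate factor $(1-3M/r)$ multiplies $\partial_t$ and $\ang$ in $LE_S^1$), so the highest-order term $H^{\alpha\beta}\partial u_\Lambda$ and the generated $W_\Lambda$ term must be shown to lose no more than a $\la t\ra^{-1/2}$ factor — precisely the content of \eqref{W-est} — and this must be matched against what Section~6 gives for the pointwise decay of the coefficients. A secondary difficulty is bookkeeping the region decomposition ($r\leq R_1$, $R_1^*\leq\rs\leq t/2$, $\rs\geq t/2$) consistently across the commuted equations, since the weights in \eqref{derivcoeff} and the null-frame decay \eqref{badpart2} behave differently in each; the scaling vector field $S$ and the identity $\partial_t=\tfrac1t S-\tfrac{\tilde r}t\partial_{\tilde r}$ are the tools that recover the extra $\la t\ra^{-1/2}$ in the interior region $\rs\leq t/2$ as in \eqref{t-derivest}. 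Finally, one must be careful that the $LE_{S,\delta}$-improvement at infinity from \eqref{Schwimpr} is enough to control the slowly decaying $\partial h$ of order $r^{-1-\delta}$ in the intermediate zone; this is the reason the paper works with $LE_{S,\delta}$ rather than $LE_{S,1}$.
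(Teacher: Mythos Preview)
Your overall architecture is right in spirit but contains a genuine gap at Step five. You claim that after commuting and applying Theorem~\ref{LE} one obtains an inequality of the form $\mathcal{E}_N(t)\leq C_2\epsilon_0^2 + C_3\epsilon\,\mathcal{E}_N(t)$, which can be absorbed to close the bootstrap with a \emph{bounded} top-order energy. This is not what Theorem~\ref{LE} gives: the trapping term on the right-hand side of \eqref{LES} is $\int_{\M_{[t_0,t]}^{ps}} \tfrac{\epsilon}{\tau}|\partial u|^2\,dV_g$, which after commutation becomes $\epsilon\int_{t_0}^{t}\tau^{-1}E_N(\tau)\,d\tau$. This cannot be absorbed into the left-hand side; via Gronwall it forces $\mathcal{E}_N(t)\lesssim t^{C_N\epsilon}\mathcal{E}_N(0)$, i.e.\ polynomial growth in $t$ (this is exactly \eqref{LESgrow} and Theorem~\ref{higherLE}). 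A bootstrap assumption $\mathcal{E}_N(t)\leq C_1\epsilon^2$ therefore cannot be improved and the argument as you wrote it does not close.

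The paper's resolution, which you are missing, is a \emph{two-tier} energy hierarchy. The bootstrap carries two energy assumptions: the top-order energy $\mathcal{E}_N(t)\lesssim \epsilon^2\la t\ra^{\delta}$ is allowed to grow slowly, while a lower-order energy $\mathcal{E}_{\tN}(t)\lesssim\epsilon^2$ with $\tN=N-3$ is assumed bounded. The growing bound is recovered directly from Theorem~\ref{higherLE} by choosing $\epsilon<\delta/C_N$. The bounded lower-order energy is the content of Section~7 (Theorem~\ref{lowerLE}), and it is \emph{not} a mere refinement: it is proved by re-summing the trapping loss $\sum_{T\ \text{dyadic}}T^{-1}T^{C\epsilon}<\infty$ at the cost of one extra derivative, together with a more careful null-frame decomposition (Lemma~\ref{waveopnullfraame}) near the cone that exploits \eqref{nullcond} to get integrable-in-$t$ error terms. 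The pointwise decay in Section~6 is then fed by the \emph{bounded} $\mathcal{E}_{\tN}$, not by $\mathcal{E}_N$; since $N_1\leq\tN-13$ this suffices to recover \eqref{ptwseapbds}. Without this split the pointwise bounds you derive in Step two would carry an unwanted $t^{C\epsilon}$ loss and the bootstrap would not close.
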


We will now outline the bootstrap argument. The rest of the paper is devoted to proving the required higher order local energy estimates and pointwise decay bounds necessary for the bootstrap.

We assume that the initial data is small enough,
\begin{equation}\label{indata}
\mathcal{E}_N(0) \leq \mu_N \epsilon^2
\end{equation}
where $N\geq 36$ and $\mu_N>0$ is a fixed, small $N$-dependent constant to be determined below.

 Let $N_1=\frac{N}2 +2$ and $\tN = N-3$.We will assume that the following a-priori bounds hold for some large constant $\tilde C$ independent of $\epsilon$ and $t$, and a fixed small $\delta>0$
\begin{equation}\label{enapbds}
\mathcal{E}_N(t) \leq \tilde C \mu_N\epsilon^2 \la t\ra^{\delta}, \qquad \mathcal{E}_{\tN}(t) \leq \tilde C \mu_N\epsilon^2,
\end{equation}
\begin{equation}\label{ptwseapbds}
|u_{\leq N_1}| \leq \frac{\epsilon\la t-\rs\ra^{1/2}}{\la t\ra}, \qquad |\partial u_{\leq N_1}| \leq \frac{\epsilon}{\la r\ra\la t-\rs\ra^{1/2}}
\end{equation}

Clearly \eqref{enapbds} and \eqref{ptwseapbds} are true for small enough times by standard local theory existence combined with \eqref{indata} and Sobolev embeddings. We will now assume that \eqref{enapbds} and \eqref{ptwseapbds} hold for all $0\leq t\leq T$, and we will improve the constants on the right hand side by a factor of $1/2$. By a standard continuity argument this will imply the desired result.

Due to the assumptions \eqref{ptwseapbds} we can apply Theorem~\ref{higherLE}. We obtain
\[
\mathcal{E}_N(t) \leq C_N \la t\ra^{C_N \epsilon} \mathcal{E}(0)
\]
If we take $\tilde C = 2C_N$ and $\epsilon < \frac{\delta}{C_N}$ we improve the a-priori bound for $\mathcal{E}_N(t)$ to
\[
\mathcal{E}_N(t) \leq \frac12 \tilde C  \mu_N \epsilon^2 \la t\ra^{\delta}
\]

Similarly due to the assumptions \eqref{ptwseapbds} we can apply Theorem~\ref{lowerLE} to obtain
\[
\mathcal{E}_{\tN}(t) \leq \frac12 \tilde C  \mu_N \epsilon^2
\]

 Finally, since $N_1\leq \tN - 13$, we can apply Theorem~\ref{ptwsedcy} and obtain
\[
|u_{\leq N_1}| \leq C'_{N_1}\frac{\la t-\rs\ra^{1/2}}{\la t\ra} \sqrt{\mathcal{E}_{\tN}(T)} \leq \frac12 \frac{\epsilon\la t-\rs\ra^{1/2}}{\la t\ra}
\]
\[
|\partial u_{\leq N_1}| \leq C'_{N_1}\frac{1}{\la r\ra\la t-\rs\ra^{1/2}} \sqrt{\mathcal{E}_{\tN}(T)} \leq \frac12 \frac{\epsilon}{\la r\ra\la t-\rs\ra^{1/2}}
\]
since we can choose $\mu_0$ so that $C'_{N_1}\tilde{C}^{1/2}\mu_N^{\frac12} \leq 1/2$.

\newsection{Local energy estimates for perturbations of Schwarzschild}
Let $g_S$ be the Schwarzschild metric, $R_1$ be a large constant, and $\delta>0$ be an arbitrarily small number. Let $g$ be a metric that is a small perturbation of $g_S$ in the sense that the difference $h^{\alpha\beta}:= g^{\alpha\beta}-g_S^{\alpha\beta}$ satisfies
\begin{equation}\label{diff}
 |h^{\alpha\beta}|, |\partial h^{\alpha\beta}|\lesssim \epsilon
\end{equation}
 everywhere in the coordinates $(t, x)$, where $x=r\omega$. Moreover, near the trapped set and the light cone we need additional decay estimates as follows:

i) When $\frac{11M}4 < r < \frac{13M}4$ (which is a region close to the trapped set) we have
\begin{align}
 | h^{\alpha\beta} |&\lesssim \epsilon \la t\ra^{-1/2} \label{cpt1}\\
 |\partial_{t} h^{\alpha\beta}| &\lesssim \epsilon \la t\ra^{-1-\delta},\label{cpt2}\\
 |\partial_r h^{\alpha\beta}|  &\lesssim \epsilon \la t\ra^{-1/2},\label{cpt3}
\end{align}

ii) In the intermediate region $R_1^* \leq \rs\leq \frac{t}2$ we will assume that
\begin{equation}\label{intrm}
| h^{\alpha\beta}| \lesssim \epsilon r^{-\delta}, \qquad |\partial h^{\alpha\beta}| \lesssim \epsilon r^{-1-\delta}
\end{equation}

iii) In the region close to the cone $\rs>\frac{t}2$ we need to assume different decay rates for different components.

The component of the metric that multiply the derivatives with worst decay $\partial_{\lL}^2 u$ will be required to satisfy the better decay estimates
\begin{equation}\label{badpart}
|\partial h^{\uL\uL} | \lesssim \epsilon\la t \ra^{-1-\delta}, \qquad |h^{\uL\uL}|\lesssim \epsilon\la t-\rs\ra \la t\ra^{-1-\delta}.
\end{equation}
This is needed for the estimates and is
consistent with what holds for Einstein's equations \eqref{einsteinwave}.

 The other components of $h$ only need to satisfy the weaker estimates:
\begin{equation}\label{goodpart}
 |\partial h | \lesssim \epsilon\la t \ra^{-\frac12-\delta}\la t-\rs\ra^{-\frac12-\delta}, \qquad |h|\lesssim \epsilon\la t-\rs\ra^{\frac12-\delta}\la t\ra^{-\frac12-\delta}.
\end{equation}

  Note in particular that since $|\partial g_S^{\alpha\beta}| \lesssim r^{-2}$ we have in the region $r\geq R_1$:
\begin{equation}\label{deriv}
 |\partial g^{\alpha\beta}| \lesssim r^{-2} + \epsilon (r^{-1-\delta} + \la t \ra^{-\frac12-\delta}\la t-\rs\ra^{-\frac12-\delta})
\end{equation}

 We will also denote by
 \[
 \M_{[t_0,t_1]}^{ps} = \M_{[t_0,t_1]}\cap\{5M/2 \leq r\leq 7M/2\}
 \]
 a neighborhood of the photonsphere.

 The main goal of this section is to prove the following local energy estimate:

\begin{theorem}\label{LE} Let $u$ solve the inhomogeneous linear wave equation
\begin{equation}\label{inhomwave}
 \Box_g u = F
\end{equation}
where $g$ is a Lorentzian metric satisfying the conditions above. Then for any $t_0 < t_1$
\begin{equation}\label{LES}\begin{split}
\|\partial  u\|_{L^{\infty}[t_0, t_1] L^2}^2 + \|u\|_{LE_S^1[t_0, t_1]}^2 \lesssim & \int_{\M_{[t_0,t_1]}^{ps}} \frac{\epsilon}{t}|\partial u|^2 dV_g + \|\partial u(t_0)\|_{L^2}^2 + \\
 & \inf_{F=F_1+F_2} \int_{\M_{[t_0,t_1]}} |F_1| (|\partial u| + r^{-1} |u|) + |\partial_{\leq 1} F_2|^2 dV_g
\end{split}\end{equation}
where $F_2$ is supported near $r=3M$.
\end{theorem}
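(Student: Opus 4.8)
The plan is to adapt the Morawetz multiplier argument used for pure Schwarzschild (Lemma \ref{ibp} and the estimates \eqref{intdivs2}--\eqref{Schwimpr}) to the perturbed operator $\Box_g$, treating the difference $h^{\alpha\beta}$ as an error to be absorbed. Concretely, I would apply the divergence identity \eqref{div} with $g$ replaced by $g_S$ but with $\Box_g u = \Box_{g_S} u + (\text{terms involving } h, \partial h)$, so that
\[
\Box_{g_S} u = F - \partial_\alpha(h^{\alpha\beta}\partial_\beta u) - (\text{lower order }h\text{ terms}),
\]
using $\Box_g u = \frac{1}{\sqrt{|g|}}\partial_\alpha(\sqrt{|g|}g^{\alpha\beta}\partial_\beta u)$ and comparing $\sqrt{|g|}$ to $\sqrt{|g_S|}$. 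I would then integrate $\nabla^\alpha P_\alpha[g_S,X,q,m]$ over $\M_{[t_0,t_1]}$ with the vector field $X = \tilde X + CK + \delta_2 X_4$, $q = \tilde q + \delta_2 q_2$ from \eqref{Xdef}, so that the main bulk term is bounded below by $\|u\|_{LE_{S,\delta}}^2$ via \eqref{imprvinftynorm}, and the boundary terms have the good sign by \eqref{bdrpos}. The left side produces exactly $E[u](\Sigma_R^+) + \sup_t E[u](t) + \|u\|_{LE_S^1}^2$; the nonlinear contribution is the integral of $(Xu + qu)$ against the $h$-error terms, plus the genuine source integral $\int F_1(|\partial u| + r^{-1}|u|) + |\partial_{\le 1}F_2|^2$.

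The heart of the matter is showing that every error term arising from $h$ can be absorbed into the left side times a small constant (plus the one term $\int_{\M^{ps}} \frac{\epsilon}{t}|\partial u|^2$ which is explicitly allowed on the right). I would split $\M_{[t_0,t_1]}$ into the regions dictated by the hypotheses: (a) the photon-sphere/trapped region $r \approx 3M$, (b) the bounded region away from $r=3M$, (c) the intermediate region $R_1^* \le \rs \le t/2$, and (d) the wave-zone $\rs > t/2$. In region (b), $|h|,|\partial h|\lesssim \epsilon$ with no degeneracy in the multiplier, so the error is $\lesssim \epsilon \|u\|_{LE_S^1}^2$ and absorbs. In region (c), the extra decay $|\partial h|\lesssim \epsilon r^{-1-\delta}$, $|h|\lesssim \epsilon r^{-\delta}$ matches the $r^{-1-\delta}$ weights built into $X_4,q_2$ (via \eqref{imprvinfty}), so again the error absorbs after integrating by parts to move a derivative off $h$ where needed; I expect here to need the improved $LE_{S,\delta}$ norm rather than $LE_{S,1}$, which is exactly why Section 2 was set up with $X_4$. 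In region (d), the key is the null-frame decomposition \eqref{nullframeexpansion}: writing the error $h^{\alpha\beta}\partial_\alpha u\,\partial_\beta u$ in the frame $\{\uL, L, A, B\}$, the dangerous coefficient $h^{\uL\uL}$ multiplies $(\uL u)^2 = (\partial_t u - \partial_{\rs}u)^2$, but $\uL$ is the \emph{good} derivative in the wave zone and $h^{\uL\uL}$ enjoys the improved decay \eqref{badpart}; all other coefficients multiply at least one good derivative $\partial u \in \{Lu, Au, Bu\}$ (which gains $\la t-\rs\ra^{-1/2}$-type smallness relative to a transversal derivative). So the Klainerman--Sobolev / $\partial h$-decay combined with the standard null-form gain $\la t-\rs\ra/\la t\ra$ should beat the $\la r\ra$ weight loss, making those errors integrable and small; for the $h\cdot u^2$ and $h \cdot \partial u \cdot u$ terms one uses Hardy's inequality as in \eqref{bdrpos} and the last line of the discussion after \eqref{X4err}.

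The main obstacle — and the whole reason for the strange shape of the theorem's right-hand side — is region (a), near the trapped set $r=3M$. There the Schwarzschild multiplier degenerates: $Q[g_S,\tilde X,\tilde q,m]$ only controls $(1-3M/r)^2|\partial_t u|^2$ and $(1-3M/r)|\ang u|^2$, so an error term of size $\epsilon |\partial h||\partial u|^2$ with $|\partial_t h|, |\partial_r h| \lesssim \epsilon\la t\ra^{-1/2}$ (from \eqref{cpt1}--\eqref{cpt3}) generically produces $\int \frac{\epsilon}{\la t\ra^{1/2}}|\partial u|^2$ near $r=3M$, which is \emph{not} absorbable by the degenerate norm and \emph{not} small enough in $t$ to be integrable directly. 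The standard trapped-set fix (as in \cite{MTT}) would demand $t^{-1-}$ decay; here with only $t^{-1/2}$ we cannot absorb it, so we are forced to carry $\int_{\M^{ps}}\frac{\epsilon}{t}|\partial u|^2$ on the right-hand side — this is why it appears, and dealing with it is deferred to the higher-order/pointwise machinery of later sections (where commuting with vector fields and the extra assumption \eqref{W-est} let one trade derivatives for the missing decay). The other localized nuisance is the source $F$: near $r=3M$ one cannot use the $LE_S^*$ dual norm (it has the wrong weight at the photon sphere), so one instead puts that piece of $F$ into $F_2$ and estimates $\int|\partial_{\le 1}F_2|^2$ after an integration by parts (commuting $\partial$ through, using ellipticity of the spatial part away from trapping in the standard way), which is harmless since $F_2$ is compactly supported in $r$. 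I would organize the proof so that everything outside $\M^{ps}$ closes cleanly by the region-by-region absorption above, and the $\M^{ps}$ contribution is simply isolated and passed to the right-hand side.
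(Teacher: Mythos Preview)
Your region-by-region strategy is right, but there is a genuine gap in the wave zone and a sign error in the null structure. First the sign: $\uL = \partial_t - \partial_{\rs}$ is the \emph{bad} (transversal) derivative near the outgoing cone, not the good one; the tangential set is $\overline\partial = \{L, r^{-1}e_A, r^{-1}e_B\}$. The term $h^{\uL\uL}(\uL u)^2$ is dangerous precisely because it carries \emph{no} tangential factor, which is why \eqref{badpart} imposes extra decay on $h^{\uL\uL}$. Your next sentence correctly lists $L,A,B$ as good, so this may be a slip, but the wave-zone logic depends on getting it right.

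The substantive gap: after the null-frame decomposition the generic cross error is of size $|\partial h|\,|\overline\partial u|\,|\partial u|$, and under \eqref{goodpart} Cauchy--Schwarz leaves a piece $\la t-\rs\ra^{-1-\delta}|\overline\partial u|^2$ which is \emph{not} controlled by any $LE_{S,\delta}$ norm (those carry only $r$-weights, not $(t-\rs)$-weights). The paper closes this with a separate ingredient you omit, Lemma~\ref{tangweight}: the multiplier $Y = \la t-\rs\ra^{-\delta}\partial_t$ yields
\[
\int_{r\ge R_1}\frac{|\overline\partial u|^2}{\la t-\rs\ra^{1+\delta}} \lesssim E[t_0] + \int r^{-1-\delta}|\partial u|^2 + |F\,\partial_t u|,
\]
and together with a Hardy inequality near the cone (Lemma~\ref{Hardylemma}) for the $u^2$ lower-order terms this is what makes the wave-zone errors absorbable. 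Your appeal to a ``null-form gain $\la t-\rs\ra/\la t\ra$'' is a pointwise heuristic; Lemma~\ref{tangweight} is the $L^2$ mechanism that actually realizes it. A smaller structural point: the paper runs the divergence identity directly for $Q[g,X,q,m]$ and estimates the difference $Q[g,\cdot]\sqrt{|g|}-Q[g_S,\cdot]\sqrt{|g_S|}$ (only first derivatives of $u$ appear), rather than treating $(\Box_g-\Box_{g_S})u$ as extra source as you propose. This keeps the vanishing $a(3M)=0$ visible and, via $|b(r)|\cdot|\partial_r h|\lesssim |r-3M|\cdot\epsilon t^{-1/2}$ and Cauchy--Schwarz, produces the $\epsilon t^{-1}|\partial u|^2$ photon-sphere error rather than the $\epsilon t^{-1/2}$ you wrote.
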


 We presented the theorem in the form that is most convenient to us for applications in subsequent sections. The $F_2$ term will only be useful to treat commutations with vector fields near the trapped set, see for example \eqref{vfcpt3}, and will otherwise equal $0$.

 After applying Cauchy Schwarz one can easily obtain a result similar Theorem~\ref{Schwarz}, namely
\[
\|\partial  u\|_{L^{\infty}[t_0, t_1] L^2}^2 + \|u\|_{LE_S^1[t_0, t_1]}^2 \lesssim \int_{\M_{[t_0,t_1]}^{ps}} \frac{\epsilon}{t}|\partial u|^2 dV_g + \|F\|_{L^1L^2+LE^*_S}^2
\]
 which combined with Gronwall's inequality implies in particular that
\begin{equation}\label{LESgrow}
\|\partial  u\|_{L^{\infty}[t_0, t_1] L^2}^2 + \|u\|_{LE_S^1[t_0, t_1]}^2 \lesssim t_1^{C_0\epsilon}\Bigl(\|F\|_{L^1L^2+LE^*_S}^2 + \|\partial u(t_0)\|_{L^2}^2\Bigr)
\end{equation}
for some constant $C_0$ independent of $t_0$, $t_1$.

 We also remark that the growth in $t$ can be removed if we allow a loss of one derivatives on the initial data and the inhomogeneous term $F$. We refer the reader to Theorem~\ref{LESbded} for such a statement.

 We will now prove Theorem~\ref{LE}. We start with a few technical lemmas.

\begin{lemma}\label{wavecoords}
Let
\[
\th^{\alpha\beta}=\sqrt{|g|} g^{\alpha\beta}-\sqrt{|g_S|}g_S^{\alpha\beta}.
\]

 Then $\th$ satisfies the estimates:
\begin{equation}\label{metdiff}
 | \th^{\alpha\beta}| \lesssim |h| \sqrt{|g|}, \qquad | \th^{\uL\uL}| \lesssim |h^{\uL\uL}| \sqrt{|g|}
\end{equation}
\begin{equation}\label{derivmetdiff1}
 \Bigl|\partial_{\mu} \th^{\alpha\beta} \Bigr| \lesssim (r^{-2}|h| + |\partial_{\mu} h|) \sqrt{|g|}, \qquad \Bigl|\partial_{t} \th^{\alpha\beta} \Bigr| \lesssim |\partial_{t} h| \sqrt{|g|}
\end{equation}
\begin{equation}\label{derivmetdiff2}
|\partial_{\mu} \th^{\uL\uL}| \lesssim (|\partial_{\mu} h^{\uL\uL}| + |h^{\uL\uL}|(r^{-2} + |\partial_\mu h|)) \sqrt{|g|}
\end{equation}

\end{lemma}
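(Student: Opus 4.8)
\noindent\emph{Proof proposal.} The plan is to reduce the whole lemma to estimating the single scalar $\sqrt{|g|}-\sqrt{|g_S|}$ and its first derivatives. Writing $g^{\alpha\beta}=g_S^{\alpha\beta}+h^{\alpha\beta}$, I would split
\[
\th^{\alpha\beta}=\sqrt{|g|}\,h^{\alpha\beta}+\bigl(\sqrt{|g|}-\sqrt{|g_S|}\bigr)g_S^{\alpha\beta}.
\]
On $\M_{[t_0,t_1]}$ the chart $(t,x)$ obtained via the function $\mu$ is regular down to $r=r_e$, so $g_S^{\alpha\beta}$ and $g_{S\,\alpha\beta}$ are bounded, $\sqrt{|g_S|}$ and $\sqrt{|g|}$ are bounded above and below, and $|\partial g_S^{\alpha\beta}|,|\partial g_{S\,\alpha\beta}|,|\partial\sqrt{|g_S|}|\lesssim r^{-2}$ (the content of \eqref{deriv} applied to Schwarzschild; in the compact range $r_e\le r\le 5M/2$ these are merely boundedness statements, while $\sqrt{|g_S|}\equiv 1$ once $r>5M/2$). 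Given the split above, every bound in \eqref{metdiff}--\eqref{derivmetdiff2} will follow once $\sqrt{|g|}-\sqrt{|g_S|}$ is under control.

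For that I would compare the two volume forms at the matrix level: with $G=(g^{\alpha\beta})$, $G_S=(g_S^{\alpha\beta})$, $H=(h^{\alpha\beta})$ one has $G=G_S(I+G_S^{-1}H)$, hence $|g|/|g_S|=1/|\det(I+G_S^{-1}H)|$. Since $|h|\lesssim\epsilon$ by \eqref{diff} and $G_S^{-1}=(g_{S\,\alpha\beta})$ is bounded, $\det(I+G_S^{-1}H)=1+P(h)$ with $|P(h)|\lesssim|h|$, and Jacobi's formula gives $|\partial_\mu P(h)|\lesssim|\partial_\mu(G_S^{-1}H)|\lesssim r^{-2}|h|+|\partial_\mu h|$, using $|\partial_\mu g_{S\,\alpha\beta}|\lesssim r^{-2}$. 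Then from $\sqrt{|g|}=\sqrt{|g_S|}\,(1+P(h))^{-1/2}$ and the chain rule (together with $|\partial_\mu\sqrt{|g_S|}|\lesssim r^{-2}$) one obtains, for $|h|$ small,
\[
\bigl|\sqrt{|g|}-\sqrt{|g_S|}\bigr|\lesssim|h|\sqrt{|g|},\qquad \bigl|\partial_\mu\sqrt{|g|}\bigr|\lesssim r^{-2}+|\partial_\mu h|,\qquad \bigl|\partial_\mu\bigl(\sqrt{|g|}-\sqrt{|g_S|}\bigr)\bigr|\lesssim\bigl(r^{-2}|h|+|\partial_\mu h|\bigr)\sqrt{|g|}.
\]
Inserting these into the split expression for $\th^{\alpha\beta}$ and into its $\partial_\mu$-derivative (Leibniz), and absorbing the harmless factor $|h|\lesssim\epsilon$ in every product $|\partial h|\,|h|$, yields the first inequalities in \eqref{metdiff} and \eqref{derivmetdiff1}. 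The $\partial_t$-estimate is the same computation, simpler because $g_S$ is static, so $\partial_t g_S^{\alpha\beta}=\partial_t\sqrt{|g_S|}=0$ and only $|\partial_t\th^{\alpha\beta}|\lesssim\sqrt{|g|}\,|\partial_t h|$ survives.

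The $\uL\uL$ estimates need one extra observation: $g_S^{\alpha\beta}L_\alpha L_\beta=g_S(L,L)=0$, with $L_\alpha=g_{S\,\alpha\beta}L^\beta$ as in \eqref{lowernull}. Contracting the split expression with $L_\alpha L_\beta/g_S(L,\uL)^2$ — which, exactly as in \eqref{huLuLcomponent}, is what extracts the $\uL^\alpha\uL^\beta$-coefficient in the null-frame decomposition \eqref{nullframeexpansion} — kills the correction term entirely, so that in fact $\th^{\uL\uL}=\sqrt{|g|}\,h^{\uL\uL}$. The second inequality in \eqref{metdiff} is then immediate, and \eqref{derivmetdiff2} drops out of Leibniz using the bound on $|\partial_\mu\sqrt{|g|}|$ above (with $\sqrt{|g|}\gtrsim1$ restoring the weight on the right). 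I expect no real obstacle: everything is an elementary matrix-determinant computation, and the only point that demands care is keeping all constants uniform on $\M_{[t_0,t_1]}$, i.e.\ checking that the Schwarzschild bounds ($\sqrt{|g_S|}\approx1$, $g_S^{\alpha\beta}$ and $g_{S\,\alpha\beta}$ bounded, $|\partial g_S|\lesssim r^{-2}$) genuinely hold in the region $r_e\le r\le 5M/2$ behind the photon sphere and across the horizon — which is automatic since that $r$-range is compact and the chart obtained via $\mu$ is regular there.
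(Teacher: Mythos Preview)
Your proposal is correct and follows essentially the same approach as the paper: both use the split $\th^{\alpha\beta}=\sqrt{|g|}\,h^{\alpha\beta}+(\sqrt{|g|}-\sqrt{|g_S|})g_S^{\alpha\beta}$, reduce everything to controlling $\sqrt{|g|}-\sqrt{|g_S|}$ and its derivative, and exploit $g_S^{\uL\uL}=0$ for the null-frame component. The only cosmetic difference is that the paper computes $\partial_\mu\sqrt{|g|}$ via the identity $|g|^{-1}\partial_\mu|g|=-g_{\alpha\beta}\partial_\mu g^{\alpha\beta}$ directly, whereas you phrase the same computation through the determinant ratio $|g|/|g_S|=1/\det(I+G_S^{-1}H)$ and Jacobi's formula; these are equivalent, and your observation that in fact $\th^{\uL\uL}=\sqrt{|g|}\,h^{\uL\uL}$ exactly (rather than as an inequality) is a mild sharpening of what the paper writes.
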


In particular, if $h$ satisfies the estimates \eqref{diff}, \eqref{cpt1}, \eqref{cpt2}, \eqref{cpt3}, \eqref{intrm}, \eqref{badpart} and \eqref{goodpart}, then so does $\th$.

\begin{proof}
We start by noticing that
\[
|\sqrt{|g|} g^{\alpha\beta}-\sqrt{|g_S|}g_S^{\alpha\beta}| \leq |h^{\alpha\beta}| \sqrt{|g|} + |g_S^{\alpha\beta}(\sqrt{|g|}-\sqrt{|g_S|})|
\]
which yields the first half of \eqref{metdiff} since $g_S^{\alpha\beta}$ is uniformly bounded. The second part of \eqref{metdiff} follows immediately since $g_S^{\uL\uL}=0$.

 To prove \eqref{derivmetdiff1} we write
\[\begin{split}
\Bigl|\partial_{\mu} (g^{\alpha\beta} \sqrt{|g|} - g_S^{\alpha\beta} \sqrt{|g_S|}) \Bigr| \lesssim |\partial_{\mu} (\sqrt{|g|} h^{\alpha\beta})| + \Bigl|\partial_{\mu} \Bigl(g_S^{\alpha\beta} (\sqrt{|g|}-\sqrt{|g_S|})\Bigr)\Bigr| \\
\lesssim |(\partial_{\mu}\sqrt{|g|}) h^{\alpha\beta}| +  |\sqrt{|g|}\partial_{\mu} h^{\alpha\beta}| + |(\partial_{\mu} g_S^{\alpha\beta})(\sqrt{|g|}-\sqrt{|g_S|})| + |g_S^{\alpha\beta}\partial_{\mu} (\sqrt{|g|}-\sqrt{|g_S|})|
\end{split}\]

 Let us estimate the first term. We use the formula (see for example \cite{Wald})
\begin{equation}\label{detderiv}
 |g|^{-1}\partial_{\mu} |g|= g^{\alpha\beta} \partial_{\mu} g_{\alpha\beta} = - g_{\alpha\beta} \partial_{\mu} g^{\alpha\beta}
\end{equation}

 This implies
\[
|(\partial_{\mu}\sqrt{|g|}) h^{\alpha\beta}| \lesssim |\sqrt{|g|} g_{\lambda\nu}(\partial_{\mu} g^{\lambda\nu}) h^{\alpha\beta}| \lesssim (r^{-2} + |\partial_\mu h|) |h^{\alpha\beta}|\sqrt{|g|} \lesssim (r^{-2}|h| + |\partial_{\mu} h|) \sqrt{|g|}
\]

The second term clearly satisfies the desired estimate.

 For the third term, we start by noting that since $|\partial g_S^{\alpha\beta}| \lesssim r^{-2}$ we have
\[
 |(\partial_{\mu} g_S^{\alpha\beta})(\sqrt{|g|}-\sqrt{|g_S|})| \lesssim r^{-2}|h|\sqrt{|g|}
\]

 For the fourth term, we note that
\[
|\partial_{\mu} (\sqrt{|g|}-\sqrt{|g_S|})| \lesssim |(\partial_{\mu} h^{\lambda\nu}) g_{\lambda\nu}\sqrt{|g|}| + |\partial_{\mu} g_S^{\lambda\nu} (h_{\lambda\nu}\sqrt{|g|} + (g_{S})_{\lambda\nu} (\sqrt{|g|}-\sqrt{|g_S|}))|
\lesssim  (r^{-2}|h| + |\partial_\mu h|) \sqrt{|g|}
 \]

 The proof of the first half of \eqref{derivmetdiff1} is now complete. For the second part, we use the same argument combined with the fact that $|\partial_t g_S| = 0$.

 To prove \eqref{derivmetdiff2}, we use that $g_S^{\underline{L}\underline{L}} = 0$:
\[
|\partial_{\mu} \th^{\uL\uL}| \lesssim |\partial_{\mu} h^{\uL\uL}|\sqrt{|g|} + |h^{\uL\uL}\partial_{\mu}\sqrt{|g|}| \lesssim (|\partial_{\mu} h^{\uL\uL}| + |h^{\uL\uL}|(r^{-2} + |\partial_\mu h|)) \sqrt{|g|}
\]
\end{proof}

The second lemma is the following refined version of a weighted local energy estimate in \cite{LR1}:

\begin{lemma}\label{tangweight}
Assume that the metric $g$ and the functions $u$ and $F$ are the ones in Theorem~\ref{LE}.
Then
\begin{equation}\label{tangweighteq}
 \int_{t_0}^{t_1} \int_{r\geq R_1} \frac{|\overline{\partial} u|^2}{\la t-\rs\ra^{1+\delta}}dV_g \lesssim E[t_0] + \int_{t_0}^{t_1} \int_{r\geq R_1/2} r^{-1-\delta}|\partial u|^2 + |F\partial_t u| dV_g
\end{equation}

Here and for the rest of the paper, we denote by $\overline{\partial}$ the directions tangent to the light cones in Schwarzschild:
\[
\overline{\partial} = \{L, \frac1r e_A, \frac1r e_B\}
\]
where $\{e_A, e_B\}$ is an orthonormal frame associated to the unit sphere.
\end{lemma}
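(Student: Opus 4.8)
\emph{Plan.} The proof is the Alinhac ``ghost weight'' multiplier argument, in the form used in \cite{LR1}, adapted to the far region $r\geq R_1$. The two facts that make it go are: in the rectangular Regge--Wheeler coordinates $x^*=\rs\omega$ the Schwarzschild metric is a small perturbation of Minkowski with $|\partial g_S|\lesssim r^{-2}$, and $\partial_t$ is a Killing field for $g_S$, so a multiplier of the form (weight)$\,\cdot\,\partial_t$ produces no Schwarzschild bulk other than what comes from differentiating the weight. Concretely I would fix a smooth weight $w=w(\rs-t)$, bounded above and below by positive constants, with $w'(s)\approx\la s\ra^{-1-\delta}$ (e.g.\ $w(s)=\exp(\int_{-\infty}^{s}\la\sigma\ra^{-1-\delta}\,d\sigma)$), together with the cutoff $\chi=\chi_{R_1/2}$ (equal to $1$ for $r\geq R_1$, vanishing for $r\leq R_1/2$), and apply the divergence identity \eqref{div} with $q=m=0$ to the multiplier $X=\chi\,w\,\partial_t$, integrated over $\M_{[t_0,t_1]}$ against $dV_g$. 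Since $\chi$ vanishes near $r=r_e$ there is no lateral boundary term; the slice $t=t_1$ carries the favorable sign (positivity of the energy density of the timelike $\partial_t$) and is discarded, while the slice $t=t_0$ is bounded by $E[t_0]$ because $|P_\alpha[g,X]|\lesssim|\partial u|^2$ there and $|\chi w|\lesssim 1$; the source contribution $\int F\cdot Xu\,dV_g$ is $\lesssim\int_{r\geq R_1/2}|F\,\partial_t u|\,dV_g$.

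The content is the identification of the bulk $\tfrac12\int Q[g,X]\,dV_g$. Using \eqref{defform} and $\partial_t(\sqrt{|g_S|}g_S^{\alpha\beta})=0$, a direct computation in $(t,x^*)$ coordinates gives
\[
Q[g,X]=\chi\,w'\Bigl(\weight^{-1}|Lu|^2+|\ang u|^2\Bigr)+E_\chi+E_S+E_h ,
\]
where $L=\partial_t+\partial_{\rs}$, $\ang$ is the angular gradient, $E_\chi$ is supported in $r\approx R_1$ and $\lesssim|\partial u|^2$, $E_S=O(r^{-2})|\partial u|^2$ comes from the $\rs$-dependence of $\weight$, and $E_h$ collects the perturbative errors. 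Since $w'>0$ the main term is $\gtrsim\chi\,w'\,|\overline{\partial}u|^2$, which after integration against $dV_g$ is comparable to the left side of \eqref{tangweighteq}; $E_\chi$ is absorbed into $\int_{r\geq R_1/2}r^{-1-\delta}|\partial u|^2$ (where $r^{-1-\delta}\approx1$), and so is $E_S$ (for $r\geq R_1$, $r^{-2}\leq r^{-1-\delta}$).

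The delicate point — and the place where the hypotheses \eqref{badpart}--\eqref{goodpart} are needed — is the perturbative error, schematically
\[
E_h\ \sim\ w'\,h^{\alpha\beta}\partial_\alpha u\,\partial_\beta u\ +\ w\,\partial_t h^{\alpha\beta}\partial_\alpha u\,\partial_\beta u\ +\ w\,\partial_t\widetilde h^{\alpha\beta}\partial_\alpha u\,\partial_\beta u ,
\]
the last term (with $\widetilde h$ as in Lemma~\ref{wavecoords}) accounting for the determinant factor. I would decompose each contraction in the null frame \eqref{nullframe}. When both $u$-derivatives are transversal the coefficient is the $\uL\uL$-component, and the improved bounds \eqref{badpart} (with Lemma~\ref{wavecoords} and $|\partial h|\lesssim\epsilon$), together with $\max(\la t\ra,\la t-\rs\ra)\gtrsim\la\rs\ra\approx r$, give $|w'\,h^{\uL\uL}|+|w\,\partial_t h^{\uL\uL}|+|w\,\partial_t\widetilde h^{\uL\uL}|\lesssim\epsilon\,r^{-1-\delta}$, so these terms are $\lesssim\epsilon\,r^{-1-\delta}|\partial u|^2$. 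When at least one $u$-derivative is tangential, $|\partial_\alpha u\,\partial_\beta u|\lesssim|\overline{\partial}u||\partial u|$, and the weaker bounds \eqref{intrm} (in $R_1^*\leq\rs\leq t/2$) and \eqref{goodpart} (in $\rs>t/2$), with $|\partial_t h|\lesssim\epsilon$ and Cauchy--Schwarz, give $|E_h|\lesssim\epsilon\,w'|\overline{\partial}u|^2+\epsilon\,r^{-1-\delta}|\partial u|^2$, the first piece being the factor $\epsilon$ extracted from $h$ (indeed $w'|h|^2\lesssim\la t-\rs\ra^{-1-\delta}|h|^2\lesssim\epsilon^2 r^{-1-\delta}$ in both regions, using $\la t-\rs\ra\gtrsim r$ when $\rs\leq t/2$ and, for compactly supported data, $\la t\ra\approx r$ when $\rs>t/2$). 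Since $\epsilon$ is small, the $\epsilon\,w'|\overline{\partial}u|^2$ terms are absorbed into the positive bulk and all other errors into $\int_{r\geq R_1/2}r^{-1-\delta}|\partial u|^2$; collecting everything yields \eqref{tangweighteq}. I expect the main obstacle to be exactly this null-frame bookkeeping: one must verify that \emph{every} component of $h$ paired with a genuinely transversal pair of derivatives carries the extra $\la t-\rs\ra\la t\ra^{-1-\delta}$ decay of \eqref{badpart}, so that no $\la t\ra^{-1/2}$-type loss — which neither the weighted bulk nor the energy could absorb — survives.
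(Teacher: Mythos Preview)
Your proposal is correct and follows essentially the same approach as the paper: a cutoff times a weight in $t-\rs$ times $\partial_t$ as multiplier, exact identification of the Schwarzschild bulk as $(-f'(\rho))\bigl(\weight^{-1}|Lu|^2+|\ang u|^2\bigr)$, and the null-frame splitting of the $h$-errors using \eqref{badpart} for the $\uL\uL$ component and \eqref{intrm}, \eqref{goodpart} for the rest. The only cosmetic differences are that the paper takes $f(\rho)=\rho^{-\delta}$ rather than your bounded Alinhac weight, and that your $E_S$ term is in fact absent---since $\partial_t$ is exactly Killing for $g_S$, the Schwarzschild bulk comes out precisely as the tangential quadratic form with no $O(r^{-2})$ remainder (see the paper's computation leading to \eqref{gS}); its inclusion is harmless but unnecessary.
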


\begin{proof}
Let $\rho=t-\rs$ and $Y=f(\rho) \partial_t$, where $f(x)= x^{-\delta}$. By using \eqref{defform} we obtain
\begin{multline}
Q[g, Y] +\frac{1}{\sqrt{|g|}}(Y(\sqrt{|g|}g^{\alpha\beta}))
\partial_\alpha u\,\partial_\beta u =2g^{\alpha\gamma}\partial_\gamma f(\rho)\,\partial_\alpha u\,\partial_t u- \partial_t f(\rho) \, g^{\alpha\beta}\partial_\alpha u\,\partial_\beta u\\
=\partial_t f(\rho)\, \big( g^{00} (\partial_t u)^2-g^{ij}\partial_i u\,\partial_j u\big)+2\partial_i f(\rho)\, g^{\alpha i} \partial_t u\, \partial_\alpha u\\
=f^\prime(\rho)\Big( g^{00} (\partial_t u)^2-g^{ij}\partial_i u\,\partial_j u-2\omega_i \frac{dr^*}{dr} g^{\alpha i} \partial_t u\, \partial_\alpha u\Big)
\label{QgY}
\end{multline}

In particular if, $g=g_S$ is the Schwarzschild metric  then
\begin{multline*}
Q[g_S, Y] =-f^\prime(\rho)\,\Big( \big(1-\frac{2M}{r}\big)^{-1}(\partial_t u)^2+
 \big(1-\frac{2M}{r}\big) (\partial_r u)^2 +|\ang u|^2 +2
 \big(1-\frac{2M}{r}\big)^{-1} \omega_i\, g_S^{\alpha i} \partial_t u\, \partial_\alpha u
\Big)\\
=-f^\prime(\rho)\,\Big( \big(1-\frac{2M}{r}\big)^{-1} \big((\partial_t+\partial_{r^*})u\big)^2  +|\ang u|^2\Big)
\end{multline*}
and so
\begin{equation}\label{gS}
Q[g_S, Y] \approx \frac{|\overline{\partial} u|^2}{\la t-\rs\ra^{1+\delta}}, \qquad r\geq R_1
\end{equation}

For the difference we have using the first line of \eqref{QgY}
\begin{equation*}
\sqrt{|g|} \,Q[g, Y]-\sqrt{|g_S|} \, Q[g_S, Y]
=f(\rho)\partial_t \th^{\alpha\beta}\, \partial_\alpha u\, \partial_\beta u
-f^\prime(\rho)\Big( 2\th^{\alpha\gamma}\partial_\gamma\rho\,\partial_\alpha u\,\partial_t u + \, \th^{\alpha\beta}\partial_\alpha u\,\partial_\beta u\Big).
\end{equation*}
(One can alternatively write $\partial_\gamma\,\rho =-g_{S\,\gamma\delta} {L}^\delta dr^*\!/dr$ and $L=\partial_t+\partial_{r^*}$.)
Expanding $h$ in the null frame using \eqref{nullframeexpansion} and using that
 $\partial_T \rho=0$, for $T\in\mathcal{T}$, we see that
\[
|\th^{\alpha\gamma}\partial_\gamma \rho\,\partial_\alpha u\,\partial_t u| \lesssim
|\th^{\uL\uL}|(\partial u)^2+|\th| |\overline{\partial} u| |\partial u|,\qquad\quad
|\th^{\alpha\beta}\partial_\alpha u\,\partial_\beta u| \lesssim |\th^{\uL\uL}|(\uL u)^2 + |\th| |\overline{\partial} u| |\partial u|.
\]
 By using Cauchy Schwarz and the extra decay for $h^{\uL\uL}$, namely \eqref{derivmetdiff1} and \eqref{badpart}, in conjunction with Lemma~\ref{wavecoords}, we obtain that
\begin{equation}\label{g-gS}
|\sqrt{|g|} \,Q[g, Y]-\sqrt{|g_S|} \, Q[g_S, Y]| \lesssim \epsilon \Bigl(r^{-1-\delta} |\partial u|^2 + \rho^{-1-\delta}|\overline{\partial} u|^2\Bigr)
\end{equation}

We now define
\[
\tY = \chi_{R_1/2} (r) Y
\]
Clearly we have that
\begin{equation}\label{Y-tY}
Q[g, \tY] = 0, \quad r\leq R_1/2, \qquad |Q[g, \tY] - Q[g, Y]| \lesssim \chi\prime_{R_1/2} |\partial u|^2, \quad r\geq R_1/2
\end{equation}
In order to prove the lemma, we multiply \eqref{inhomwave} by $\tY u$ and apply the divergence theorem. We obtain
\[
\int_{\M_{[t_0,t_1]}} Q[g, \tY] dV_g = -\int_{\M_{[t_0,t_1]}} F \cdot  \tY u \  dV_{g} - \left. BDR^{g}[u]\right|_{t = t_0}^{t = t_1}
\]

 Due to \eqref{gS}, \eqref{g-gS} and \eqref{Y-tY} we obtain
\[
\int_{\M_{[t_0,t_1]}} Q[g, \tY] dV_g \gtrsim \int_{t_0}^{t_1} \int_{r\geq R_1} \frac{|\overline{\partial} u|^2}{\la t-\rs\ra^{1+\delta}}dV_g - \int_{t_0}^{t_1} \int_{r\geq R_1/2} r^{-1-\delta} |\partial u|^2
\]

 On the other hand, the two boundary terms trivially satisfy
\[
\left. BDR^{\mathbf S}[u]\right|_{t = t_i} \approx  \  \|\chi_{R_1/2} \partial  u(t_i)\|_{L^2}^2,
  \qquad i=1,2
\]

 Lemma~\ref{tangweight} now follows from the divergence theorem.
\end{proof}

 The third lemma is a Hardy type inequality near the cone.

 \begin{lemma}\label{Hardylemma}

The lower order terms satisfy
\begin{equation}\label{lot}
\int_{t_0}^{t_1} \int_{r\geq R_1} r^{-2} \la t\ra^{-\frac12-\delta} \la t-\rs\ra^{-\frac12-\delta} |u|^2 dV_g \lesssim  \int_{t_0}^{t_1} \int_{r\geq R_1} r^{-1-\delta}|\partial u|^2 + r^{-3-\delta}|u|^2 dV_g
\end{equation}

\end{lemma}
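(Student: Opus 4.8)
The plan is to reduce \eqref{lot} to a one–dimensional weighted Hardy inequality in the tortoise variable $\rs$, in the spirit of the weighted estimates of Lindblad--Rodnianski. I first record the reductions valid in $r\ge R_1\gg M$: there $\rs\approx r$, the vector field $\partial_{\rs}=(1-\tfrac{2M}{r})\,\omega^i\partial_i$ satisfies $|\partial_{\rs}u|\le|\partial u|$, and since $\det g_S=-1$ and $g$ is $\epsilon$–close to $g_S$ one has $dV_g\approx dt\,r^2\,dr\,d\omega\approx r^2\,d\rs\,d\omega\,dt$. Moreover, by finite speed of propagation the solutions to which this lemma is applied (coming, as in all our applications, from compactly supported data) are supported in the outgoing region $\{\rs\le t+C\}$, so on $\{r\ge R_1\}\cap\{u\ne 0\}$ we have $\la t\ra\gtrsim r$ and $\la t-\rs\ra\lesssim\la t\ra$. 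Finally I split $\{r\ge R_1\}$ into $\mathcal D=\{\rs\le t/2\}$ and $\mathcal C=\{\rs>t/2\}$.

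On $\mathcal D$ the bound is pointwise: $\rs\le t/2$ forces $\la t-\rs\ra\approx\la t\ra$, and $\la t\ra\gtrsim\rs\approx r$, so the weight on the left of \eqref{lot} satisfies $r^{-2}\la t\ra^{-\frac12-\delta}\la t-\rs\ra^{-\frac12-\delta}\lesssim r^{-2}\la t\ra^{-1-2\delta}\lesssim r^{-3-\delta}$; hence the contribution of $\mathcal D$ is already $\lesssim\int_{r\ge R_1}r^{-3-\delta}|u|^2\,dV_g$.

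On $\mathcal C$, fix $t$ and $\omega$ and set $W(\rs)=\la t-\rs\ra^{-\frac12-\delta}$ and $\Phi(\rs)=\int_{t/2}^{\rs}W(s)\,ds$. Integrating by parts in $\rs$ on $[t/2,\infty)$ — the boundary terms vanish because $\Phi(t/2)=0$ and $u$ is compactly supported in $\rs$ — and using Young's inequality with weights to absorb a copy of $\int W|u|^2$, one obtains
\[
\int_{\rs>t/2}\la t-\rs\ra^{-\frac12-\delta}|u|^2\,d\rs\ \lesssim\ \int_{\rs>t/2}\frac{\Phi(\rs)^2}{W(\rs)}\,|\partial_{\rs}u|^2\,d\rs .
\]
Multiplying by $\la t\ra^{-\frac12-\delta}$ and integrating in $t$ and $\omega$ (and recalling $dV_g\approx r^2\,d\rs\,d\omega\,dt$, so the $r^2$ cancels the $r^{-2}$) reproduces the $\mathcal C$–part of the left side of \eqref{lot} on the left. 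It therefore remains to verify the pointwise weight estimate $\la t\ra^{-\frac12-\delta}\,\Phi(\rs)^2/W(\rs)\lesssim r^{1-\delta}$ on $\mathcal C\cap\{u\ne 0\}$; granting it, the right side above is $\lesssim\int r^{1-\delta}|\partial u|^2\,d\rs\,d\omega\,dt\approx\int_{r\ge R_1}r^{-1-\delta}|\partial u|^2\,dV_g$, which combined with the $\mathcal D$–estimate finishes the proof. For the verification one has on $\mathcal C$ that $|\Phi(\rs)|=\int_{t/2}^{\rs}\la t-s\ra^{-\frac12-\delta}\,ds\lesssim\la t\ra^{\frac12-\delta}$ — the integral over $t/2\le s\le t$ is $\approx(t/2)^{\frac12-\delta}$ and, on the support, the part with $s>t$ contributes only $O(1)$ — so $\Phi^2/W\lesssim\la t\ra^{1-2\delta}\la t-\rs\ra^{\frac12+\delta}$; then $\la t-\rs\ra\lesssim\la t\ra$ together with $r\approx\la t\ra$ on $\mathcal C\cap\{u\ne 0\}$ give $\la t\ra^{-\frac12-\delta}\Phi^2/W\lesssim\la t\ra^{1-2\delta}\lesssim\la t\ra^{1-\delta}\approx r^{1-\delta}$.

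The main difficulty is precisely this cone region: near $\rs=t$ the left-hand weight in \eqref{lot} behaves like $r^{-\frac52-\delta}$, which is strictly larger than the $r^{-3-\delta}$ available on the right, so no pointwise domination is possible and one is forced to integrate by parts. What makes the Hardy argument close is that the factor $\la t-\rs\ra^{\frac12+\delta}$ produced by the integration by parts can be traded for a power of $r$, and this works exactly because $\la t-\rs\ra\lesssim\la t\ra\approx r$ there — which is where the restriction to $\{\rs>t/2\}$ and the finite–speed support property both enter (and, as the scaling count shows, both are genuinely needed: without the support constraint the inequality fails for data concentrated at large $r$ and small $t$).
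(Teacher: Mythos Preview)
Your proof is correct and follows essentially the same route as the paper: split into a region away from the cone where the weight is pointwise dominated by $r^{-3-\delta}$, and a near-cone region handled by a one-dimensional Hardy inequality in the $\rs$ variable. The only differences are cosmetic: the paper uses a smooth cutoff $\chi(r/t)$ rather than your sharp split at $\rs=t/2$, and it invokes the Lindblad--Rodnianski Hardy inequality
\[
\int_{\R^3}\frac{f^2}{\la t-\rs\ra^{\gamma}}\,dx\ \lesssim\ \int_{\R^3}\frac{(\partial_r f)^2}{\la t-\rs\ra^{\gamma-2}}\,dx,\qquad \gamma>1,\ \gamma\ne 3,
\]
with $\gamma=1+\delta$, whereas you rederive an equivalent bound by hand via the antiderivative $\Phi$. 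Your explicit mention of the finite-speed support constraint is a useful clarification --- the paper's argument relies on it as well (to ensure $\la t-\rs\ra\lesssim\la t\ra$ on the support of $\chi(r/t)u$), though it is left implicit there.
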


\begin{proof}
Let $\chi(x)$ be a smooth cutoff so that $\chi\equiv 1$ when $x\geq 1$ and $\chi\equiv 0$ when $x\leq 1/2$. Since $\la t-\rs\ra \gtrsim r$ in the region where $\chi(r/t) \ne 1$, we obtain far from the cone:
\[
\int_{t_0}^{t_1} \int_{r\geq R_1} r^{-2} \la t\ra^{-\frac12-\delta} \la t-\rs\ra^{-\frac12-\delta} (1-\chi^2(r/t))|u|^2 dV_g \lesssim  \int_{t_0}^{t_1} \int_{r\geq R} r^{-3-\delta}|u|^2 dV_g
\]

 Close to the cone, we apply the following Hardy-type inequality from \cite{LR1}, which holds for all $C^1$ compactly supported functions $f$:
\begin{equation}\label{LRHardy}
 \int_{\R^3} \frac{f^2}{\la t-\rs\ra^{\gamma}} dx \lesssim \int_{\R^3} \frac{(\partial_r f)^2}{\la t-\rs\ra^{\gamma-2}} dx, \qquad \gamma > 1, \qquad \gamma\ne 3
\end{equation}

 We will apply \eqref{LRHardy} to the function $\chi(r/t)u$, and obtain (taking also into account the support properties of $\chi$ and $\sqrt{|g|}\approx \sqrt{|g_S|}$):
\[\begin{split}
& \int_{t_0}^{t_1} \int_{r\geq R_1} r^{-2} \la t\ra^{-\frac12-\delta} \la t-\rs\ra^{-\frac12-\delta} \chi^2(r/t)|u|^2 dV_g
 \lesssim \int_{t_0}^{t_1} \int_{r\geq R_1} t^{-2-\delta} \la t-\rs\ra^{-1-\delta} \chi^2(r/t)|u|^2 dV_S \\
& \lesssim \int_{t_0}^{t_1} t^{-2-\delta} \int_{r\geq R_1} \la t-\rs\ra^{1-\delta} \partial_r (\chi^2(r/t)u)^2 dV_S
 \lesssim \int_{t_0}^{t_1} \int_{r\geq R_1} r^{-2-\delta}(\partial_r u)^2 + r^{-3-\delta} u^2 dV_g
\end{split}
\]
 The proof of \eqref{lot} is now complete.
\end{proof}

 We are now ready for the proof of Theorem~\ref{LE}.

 \begin{proof}
  Let $X$ and $q$ be the vector field and scalar function defined in \eqref{Xdef}. We proved in the previous section (see \eqref{intdivs2}) that
 \[
    \int_{\M_{[t_0,t_1]}}  Q[g_S,X,q,m]  dt dx=
    - \int_{\M_{[t_0,t_1]}} \Box_{\mathbf S} u \cdot  Xu \  dV_{\mathbf S}
    - \left. BDR^{\mathbf S}[u]\right|_{t = t_0}^{t = t_1} - \left. BDR^{\mathbf S}[u] \right|_{r=r_e}
 \]
  with (see \eqref{imprvinftynorm}
  \[
  \int_{\M_{[t_0,t_1]}}  Q[g_S,X,q,m]  dt dx\gtrsim
\| u\|_{LE_{S,\delta}[t_0, t_1]}^2
  \]
and the boundary terms satisfying
\[
  \left. BDR^{\mathbf S}[u]\right|_{t = t_i} \approx  \  \|\partial  u(t_i)\|_{L^2}^2,
  \quad i=1,2,\qquad\quad
  \left. BDR^{\mathbf S}[u]\right|_{r=r_e} \approx \ \| u\|_{H^1(\Sigma^+_{[t_0,t_1]})}^2.
\]

  We will first prove a weaker version of \eqref{LES} for the metric $g$, namely
 \begin{equation}\label{wkdecay}
\begin{split}
\|\partial  u\|_{L^{\infty}[t_0, t_1] L^2}^2 & + \| u\|_{LE_{S,\delta}[t_0, t_1]}^2  \lesssim  \int_{\M_{[t_0,t_1]}^{ps}} \frac{\epsilon}{t}|\partial u|^2 dV_g + \|\partial u(t_0)\|_{L^2}^2 + \\ & \inf_{F=F_1+F_2} \int_{\M_{[t_0,t_1]}} |F_1| (r^{\delta} |\partial u| + r^{-1+\delta} |u|) + |\partial F_2| |u| dV_g
\end{split}\end{equation}

 We use the same $X$ and $q$ from \eqref{Xdef} as a multiplier for our metric $g$. We obtain
\begin{equation}\label{intdivg}
    \int_{\M_{[t_0,t_1]}} Q[g,X,q,m] dt dx=
    - \int_{\M_{[t_0,t_1]}} \Box_g u \cdot  Xu \  dV_{g}
    - \left. BDR^{g}[u]\right|_{t = t_0}^{t = t_1} - \left. BDR^{g}[u] \right|_{r=r_e}
  \end{equation}

 Due to the smallness condition \eqref{diff} we see that the boundary terms still satisfy \eqref{bdrpos}. Moreover,
\[\begin{split}
\Bigl |\int_{\M_{[t_0,t_1]}}\!\!\!\!\!\!\!\!\!\!\!\! F_1 \cdot  Xu \  dV_{g} \Bigr|  \lesssim \int_{\M_{[t_0,t_1]}}\!\!\!\!\!\!\!\!\!\!\!\!  |F_1| (|\partial u| + r^{-1} |u|) dV_g \leq \delta_0 \| u\|_{LE_{S,\delta}[t_0, t_1]}^2   + C(\delta_0) \int_{\M_{[t_0,t_1]}} \!\!\!\!\!\!\!\!\!\!\!\! |F_1| (r^{\delta} |\partial u| + r^{-1+\delta} |u|)  dV_g
\end{split}\]
and we can absorb the first term to the LHS of \eqref{wkdecay} for a small enough $\delta_0$.

 For the term involving $F_2$, the only problematic component is $CK$. We integrate by parts in time and use the trace theorem for the boundary terms at $t=t_0$ and $t=t_1$. Since $F_2$ is compactly supported, we obtain
\[
\Bigl |\int_{\M_{[t_0,t_1]}} F_2 \cdot  Xu \  dV_{g} \Bigr| \leq \delta_0 (\|\partial  u\|_{L^{\infty}[t_0, t_1] L^2}^2 + \| u\|_{LE_{S,\delta}[t_0, t_1]}^2) + C(\delta_0) \int_{\M_{[t_0,t_1]}}|\partial_{\leq 1} F_2|^2 dV_g
\]

 By letting $\delta_0$ be small enough, we can absorb the terms involving $\|\partial  u\|_{L^{\infty}[t_0, t_1] L^2}^2$ and $\| u\|_{LE_{S,\delta}[t_0, t_1]}^2$ to the left hand side of \eqref{wkdecay}.

 In order to finish the proof of \eqref{wkdecay}, it remains to show that
\begin{equation}\label{diffQint}
\begin{split}
& \int_{\M_{[t_0,t_1]} } Q[g,X,q,m]\sqrt{|g|} - Q[g_S,X,q,m]\sqrt{|g_S|} dt dx \\ & \lesssim \epsilon \Bigl(\int_{\M_{[t_0,t_1]} } Q[g_S,X,q,m]\sqrt{|g_S|}dt dx + \int_{\M_{[t_0,t_1]}^{ps}} t^{-1} |\partial u|^2 dt dx+ \|\partial u\|^2_{L^{\infty}[t_0, t_1]L^2}\Bigr)
\end{split}
\end{equation}
 We note here that it is the presence of the term $\int_{\M_{[t_0,t_1]}^{ps}} t^{-1} |\partial u|^2 dt dx$ that yields increase in time of the norms; without it, Gronwall's inequality gives boundedness. We will come back to this in Section 6.

 It will be convenient to use the notation of \cite{MMTT}. We write down more explicitly the vector $\tilde X$ as
 \begin{equation}\label{tXdef}
   \tilde X = X_1 + X_2 + CK
  \end{equation}
Here $X_2$ is a smooth vector field supported when $r<\frac{5M}2$, and
\[
 X_1 = b(r)\partial_{r} = \weight a(r)\partial_{r}
\]
 where $a: [r_e, \infty) \to \R$ is a smooth, bounded function also satisfying
\begin{equation}\label {condona}
 a'(r)\approx r^{-2}, \qquad a(3M)=0
 \end{equation}
 The exact formulas for $a$ and $X_2$ are not important, only the properties listed (see \cite{MMTT} for more details).

 The scalar function $\tq$ is (see \cite{MMTT})
 \[
  \tq(r) = \frac12 \weight \frac{1}{r^2}\partial_r\Bigl(r^2 a(r)\Bigr) + \delta_1 \chi_{\{ r > 5M/2\}} \frac{(r-3M)^2}{r^4}
 \]
 for some $\delta_1 \ll 1$, and it satisfies
\begin{equation}\label{condonq}
|\partial_r^\alpha \tq| \leq c_\alpha r^{-1-\alpha} \quad \alpha = 0,1,2
\end{equation}

 We will prove \eqref{diffQint} by splitting the integrating region into several parts.

 The error terms are easily handled near the event horizon. Indeed, due to \eqref{diff} we have
\begin{equation}\label{eherr}
\int_{\M_{[t_0,t_1]}^{[r_e, \frac{5M}2)}} |Q[g,X,q,m]\sqrt{|g|} - Q[g_S,X,q,m]\sqrt{|g_S|}| dt dx \lesssim \int_{\M_{[t_0,t_1]}^{[r_e, \frac{5M}2)}} \epsilon (|\partial u|^2 + u^2) dV_g
\end{equation}
 In particular this handles the error coming from $X_2$ and $m$.

 In the region $\frac{5M}2 < r < R_1$ we will prove that
\begin{equation}\label{diffQ}
\begin{split}
\int_{\M_{[t_0,t_1]}^{[\frac{5M}2, R_1)}}& |Q[g,X,q,m]\sqrt{|g|} -  Q[g_S,X,q,m]\sqrt{|g_S|}| dtdx \lesssim \\
& \epsilon \int_{\M_{[t_0,t_1]}^{[\frac{5M}2, R_1)}} Q[g_S,X,q,m]\sqrt{|g_S|} + t^{-1-\delta} |\partial u|^2 dt dx + \int_{\M_{[t_0,t_1]}^{ps}} \frac{\epsilon}{t}|\partial u|^2 dV_g
\end{split}
\end{equation}

In the region $r>R_1$ we will prove
\begin{equation}\label{intfar}
\begin{split}
\int_{\M_{[t_0,t_1]}^{[R_1, \infty)}} &|Q[g,X,q,m]\sqrt{|g|} - Q[g_S,X,q,m]\sqrt{|g_S|}| dt dx \lesssim \epsilon \int_{\M_{[t_0,t_1]}^{[R_1, \infty)}} r^{-1-\delta} |\partial u|^2 \\ &+ r^{-3-\delta}u^2
+ \la t-\rs\ra^{-1-\delta}|\overline{\partial} u|^2 +  r^{-2} \la t\ra^{-\frac12-\delta} \la t-\rs\ra^{-\frac12-\delta} |u|^2 dV_g
\end{split}
\end{equation}

By using Lemma~\ref{tangweight} and Lemma~\ref{Hardylemma} we can bound the last two terms on the RHS of \eqref{intfar} and we obtain
\begin{equation}\label{intfar2}
\begin{split}
\int_{\M_{[t_0,t_1]}^{[R_1, \infty)}} & |Q[g,X,q,m]\sqrt{|g|} - Q[g_S,X,q,m]\sqrt{|g_S|}| dt dx \lesssim \\ & \epsilon \Bigl(\int_{\M_{[t_0,t_1]}^{[\frac12 R_1, \infty)}} r^{-1-\delta} |\partial u|^2 + r^{-3-\delta}u^2 + |F\partial_t u| dV_g + E[t_0]\Bigr)
\end{split}
\end{equation}

 The desired estimate now follows from adding \eqref{eherr}, \eqref{diffQ}, and \eqref{intfar2}.

 We already dealt with \eqref{eherr} above. In the region $r>\frac{5M}2$ we will have to carefully analyze the contributions coming from $X_1$, $CK$, $q$ and $X_4$.
 By applying \eqref{defform} we immediately obtain
\begin{equation}\label{CK}
\sqrt{|g|} Q[g,CK] = C\frac12 (\partial_{t} \th^{\alpha\beta})\partial_\alpha u \partial_\beta u
\end{equation}

For $X_2$ we obtain from \eqref{QpiX2g}:
\begin{multline}
\frac12\Bigl(\sqrt{|g|}Q[g,X_2]-\sqrt{|g_S|}Q[g_S,X_2]\Bigr) =-\frac12 b(r)\partial_r \th^{\alpha\beta}\,
\partial_\alpha u\,\partial_\beta u \\
+\frac{b(r)}{r} \big(\th^{\alpha j}-\th^{\alpha r} \omega^j\big) \partial_\alpha u\, \partial_j u
+b^\prime(r) \th^{\alpha r}\partial_r u\,\partial_\alpha u- \frac{1}{2r^2} \partial_r(r^2 b(r)) \, \th^{\alpha\beta}\partial_\alpha u\,\partial_\beta u
\end{multline}

On the other hand, for a scalar function $q$ we calculate
\begin{equation}\label{qdiff}
\sqrt{|g|}Q[g,0, q, 0]-\sqrt{|g_S|}Q[g_S, 0, q, 0] = q (\th^{\alpha\beta}\partial_\alpha u\,\partial_\beta u) -\frac12 [(\sqrt{|g|}\Box_g - \sqrt{|g_S|}\Box_{g_S}) q] u^2
\end{equation}

 Since $b=\weight a$, and from the definition of $\tq$, we obtain that
\begin{multline}\label{X2diff}
\sqrt{|g|}Q[g,X_2, \tq, 0]-\sqrt{|g_S|}Q[g_S,X_2, \tq, 0] = -\frac12 b(r)\partial_r \th^{\alpha\beta}\,
\partial_\alpha u\,\partial_\beta u +\frac{b(r)}{r} \big(\th^{\alpha j}-\th^{\alpha r} \omega^j\big) \partial_\alpha u\, \partial_j u\\
+ b^\prime(r) \th^{\alpha r}\partial_r u\,\partial_\alpha u- \frac{M}{r^2} a(r) \, \th^{\alpha\beta}\partial_\alpha u\,\partial_\beta u
-\frac12 [(\sqrt{|g|}\Box_g - \sqrt{|g_S|}\Box_{g_S}) \tq] u^2
\end{multline}

 For the term involving $X_4$, a quick computation using \eqref{QpiX2g} and \eqref{qdiff} yields
\begin{multline}\label{X4diff}
\sqrt{|g|} Q[g, X_4, q_2, 0] - \sqrt{|g_S|} Q[g_S, X_4, q_2, 0] = -\frac12 \chi_{R_1}f(r)\partial_r \th^{\alpha\beta}\,
\partial_\alpha u\,\partial_\beta u +\frac{\chi_{R_1}f(r)}{r} \big(\th^{\alpha j}-\th^{\alpha r} \omega^j\big) \partial_\alpha u\, \partial_j u\\
+(\chi_{R_1}f)^\prime(r) \th^{\alpha r}\partial_r u\,\partial_\alpha u- \frac12 (\chi_{R_1}f)^\prime(r) \th^{\alpha\beta}\partial_\alpha u\,\partial_\beta u
-\frac12 [(\sqrt{|g|}\Box_g - \sqrt{|g_S|}\Box_{g_S}) q_2] u^2
\end{multline}

  Let us start with the analysis in the compact region $\frac{5M}2\leq r\leq R_1$.
 We first note that due to \eqref{CK}, \eqref{cpt2} and \eqref{derivmetdiff1}, we have
\begin{equation}\label{Kestcpt}
 |\sqrt{|g|} Q[g,CK] - \sqrt{|g_S|} Q[g_S,CK]| \lesssim \epsilon t^{-1-\delta} |\partial u|^2
\end{equation}

 For the $X_2$ term, we obtain from \eqref{X2diff}, \eqref{diff}, \eqref{metdiff}, \eqref{derivmetdiff1}, \eqref{condona} and Cauchy Schwarz that
\begin{equation}\label{X2estcpt}
 |\sqrt{|g|}Q[g,X_2, \tq, 0]-\sqrt{|g_S|}Q[g_S,X_2, \tq, 0]| \lesssim \epsilon \Bigl[t^{-1} |\partial u|^2 + (r-3M)^2 |\partial u|^2 + (\partial_r u)^2 + u^2 \Bigr]
\end{equation}

 Combining \eqref{Kestcpt}, \eqref{X2estcpt},  and the fact that $X_4$ and $q_2$ are supported in $\{r\geq R_1\}$, we obtain \eqref{diffQ} after integration.

 Let us now prove \eqref{intfar}. It is enough to prove the pointwise bound for $\{r\geq R_1\}$
 \begin{equation}\label{ptwseerrfar}
 \begin{split}
 |Q[g,X,q,m]\sqrt{|g|} - & Q[g_S,X,q,m]\sqrt{|g_S|}| \lesssim \epsilon \Bigl[r^{-1-\delta} |\partial u|^2 + r^{-3-\delta}u^2 \\ & + \la t-\rs\ra^{-1-\delta}|\overline{\partial} u|^2 +  r^{-2} \la t\ra^{-\frac12-\delta} \la t-\rs\ra^{-\frac12-\delta} |u|^2\Bigr]\sqrt{|g|}.
 \end{split}
 \end{equation}

 We start by estimating the contribution of $CK$ given in \eqref{CK}. Note that by Cauchy Schwarz and using \eqref{badpart}, \eqref{goodpart} we obtain
\[
 |(\partial_{t} \th^{\alpha\beta}) \partial_{\alpha} u \partial_{\beta} u| \lesssim \Bigl(|\partial_{t} \th^{\uL\uL}|(\underline{L}u)^2 + |\partial_t \th||\overline{\partial} u|^2\Bigr)\sqrt{|g|} \lesssim \epsilon\sqrt{|g|}\Bigl(r^{-1-\delta}(\underline{L}u)^2 + \la t-\rs\ra^{-1-\delta}|\overline{\partial} u|^2\Bigr)
\]
which suffices.

 For the term $Q[g, X_2, \tq, 0] + Q[g, X_4, q_2, 0]$ we estimate each term in \eqref{X2diff}, \eqref{X4diff}.

 For the first term we use \eqref{intrm}, \eqref{badpart}, \eqref{goodpart}, \eqref{derivmetdiff2} and Cauchy Schwarz:
\begin{multline}
| b(r)\partial_r \th^{\alpha\beta}\,\partial_\alpha u\,\partial_\beta u| + |\chi_{R_1} f(r)\partial_r \th^{\alpha\beta}\,\partial_\alpha u\,\partial_\beta u|
\lesssim \Bigl(|\partial_{r} \th^{\uL\uL}|(\underline{L}u)^2 + |\partial_r \th||\overline{\partial} u|^2\Bigr) \sqrt{|g|} \\
\lesssim \epsilon\sqrt{|g|}\Bigl(t^{-1-\delta}(\underline{L}u)^2 + \la t-\rs\ra^{-1-\delta}|\overline{\partial} u|^2 + r^{-1-\delta}|\partial u|^2\Bigr)
\end{multline}
Similarly the second term is estimated from the boundedness of $b$ and $f$, and \eqref{intrm}, \eqref{goodpart} and \eqref{metdiff}
\begin{equation}
|\frac{b(r)}{r} \big(\th^{\alpha j}-\th^{\alpha r} \omega^j\big) \partial_\alpha u\, \partial_j u| + |\frac{\chi_{R_1}f(r)}{r} \big(\th^{\alpha j}-\th^{\alpha r} \omega^j\big) \partial_\alpha u\, \partial_j u| \lesssim
r^{-1}|\th||\partial u|^2 \lesssim r^{-1-\delta}|\partial u|^2
\end{equation}
For the third term , we use that
\[
|b'| + |(\chi_{R_1}f)'| \lesssim r^{-1-\delta}
\]
in conjunction with \eqref{goodpart} and \eqref{metdiff}
\[
|b^\prime(r) \th^{\alpha r}\partial_r u\,\partial_\alpha u| + |(\chi_{R_1}f)^\prime(r) \th^{\alpha r}\partial_r u\,\partial_\alpha u| \lesssim r^{-1-\delta}|\partial u|^2
\]
Similarly for the fourth term,
\[
|\frac{M}{r^2} a(r) \, \th^{\alpha\beta}\partial_\alpha u\,\partial_\beta u| + |(\chi_{R_1}f)^\prime(r) \th^{\alpha\beta}\partial_\alpha u\,\partial_\beta u| \lesssim r^{-1-\delta}|\partial u|^2
\]

We are left with estimating the terms involving the Lagrangian corrections $q=\tq + q_2$. We have
\[
|\sqrt{|g|} \Box_g q - \sqrt{|g_S|} \Box_S q| \lesssim |\th^{\alpha\beta} \partial_{\alpha}\partial_{\beta} q| + |\partial_{\alpha}\th^{\alpha\beta}\partial_{\beta} q|
\]

 For the first term, we have from \eqref{condonq} and \eqref{imprvinfty} that
\[
 |\partial_{\alpha}\partial_{\beta} q| \lesssim r^{-3}
\]
which combined with \eqref{metdiff}, \eqref{goodpart} yields
\begin{equation}\label{q2estfar}
|\th^{\alpha\beta} \partial_{\alpha}\partial_{\beta} q| \lesssim \epsilon r^{-3-\delta}
\end{equation}

 For the second term, we use \eqref{condonq}, \eqref{derivmetdiff1}, and \eqref{goodpart} :
\[
|\partial_{\alpha}\th^{\alpha\beta}\partial_{\beta} q| \lesssim r^{-2}|\partial\th| \lesssim \epsilon \sqrt{|g|} (r^{-3-\delta} + r^{-2} \la t\ra^{-\frac12-\delta} \la t-\rs\ra^{-\frac12-\delta})
\]

 \eqref{intfar} now follows in the region $\{r\geq R_1\}$.

 This concludes the proof of \eqref{wkdecay}. The transition to the stronger estimate \eqref{LES} is now straightforward, following the methods of \cite{MT} and \cite{MMTT}. Indeed, it is enough to improve the estimate for
 $w =\chi_{R_1} u$, which satisfies the equation
\[
\Box_g w = \chi_{R_1} F + [\Box_g ,\chi_{R_1}] u := \tilde F
\]

 For some fixed dyadic number $\rho\geq R_1$ we use a multiplier of the form
\[
 X_{\rho} = C\partial_t + \tf(r)\partial_r , \qquad q_{\rho} = \frac{\tf(r)}r , \qquad \tf(r)= \frac{r}{r+\rho}
\]

 A quick computation yields (see also \cite{MT} Proposition 8):
\[
\left. BDR^{g}[u]\right|_{t = t_i} \approx \|\partial  u(t_i)\|_{L^2}^2
\]
\[
\int_{\M_{[t_0,t_1]}} Q[g_S,X_{\rho},q_{\rho},0] dV_g \gtrsim \| \la r\ra^{-\frac12} w\|_{L^2 ([t_0, t_1] \times A_{\rho})}^2 + \| \la r\ra^{-\frac32} w\|_{L^2 ([t_0, t_1] \times A_{\rho})}^2
\]
 On the other hand, the analogue estimate to \eqref{ptwseerrfar} also holds in this case by the same proof. We thus have
 \begin{equation}\label{ptwseerrfar2}
 \begin{split}
 \Bigl |Q[g,X_{\rho},q_{\rho},0]\sqrt{|g|} - & Q[g_S,X_{\rho},q_{\rho},0]\sqrt{|g_S|}\Bigr| \lesssim \epsilon \Bigl[r^{-1} |\partial w|^2 + r^{-3}w^2 \\ & + \la t-\rs\ra^{-1-\delta}|\overline{\partial} w|^2 +  r^{-2} \la t\ra^{-\frac12-\delta} \la t-\rs\ra^{-\frac12-\delta} |w|^2\Bigr]\sqrt{|g|}
 \end{split}
 \end{equation}
and by Lemmas \ref{tangweight} and \ref{Hardylemma} we get
\[\begin{split}
\int_{\M_{[t_0,t_1]}} \Big|Q[g,X_{\rho},q_{\rho},0]\sqrt{|g|} -  Q[g_S,X_{\rho},q_{\rho},0]\sqrt{|g_S|}\Big| dV_g \lesssim \epsilon E[t_0] + \epsilon \|w\|_{LE[t_0, t_1]}^2 + \int_{\M_{[t_0,t_1]}} \!\!\!\!\!\!\!\!\!\!\!\! |\tilde F|(|\partial u| + r^{-1}|u|) dV_g
\end{split}\]
The desired estimate \eqref{LES} follows after taking the supremum over the dyadic numbers $\rho$.
\end{proof}

\newsection{Commuting with derivatives and vector fields}
We will now prove the equivalent of Theorem~\ref{LE} for higher order derivatives and vector fields.
The goal of this section is to prove the following:
\begin{theorem}\label{higherLE} Let $u$ solve \eqref{maineeqn}, where $g$ is a Lorentzian metric satisfying the conditions from Section 3.
 Then for some constant $C_N$ independent of $t_0$, $t_1$ we have:
\begin{equation}\label{higherLES}
 \|\partial  u_{\leq N}\|_{L^{\infty}[t_0, t_1] L^2} + \|u_{\leq N}\|_{LE_S^1[t_0, t_1]} \leq C_N t_1^{C_N\epsilon} \|\partial u_{\leq N}(t_0)\|_{L^2}
\end{equation}
\end{theorem}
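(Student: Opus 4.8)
The plan is to commute \eqref{maineeqn} with the vector fields $Z^\Lambda$, $|\Lambda|\le N$, feed each commuted equation into Theorem~\ref{LE}, sum over $\Lambda$, and close by a Gr\"onwall argument in $t_1$; the outcome is the higher-order analogue of \eqref{LESgrow}. Write $\Box_g=\Box_{g_S}+L_h$, so that by \eqref{W-def} one has $L_h u=h^{\alpha\beta}\partial_\alpha\partial_\beta u-W^\beta\partial_\beta u$. Since $\Box_g u=0$, each $u_\Lambda$ solves $\Box_g u_\Lambda=F_\Lambda$ with
\[
F_\Lambda=[\Box_{g_S},Z^\Lambda]u+[L_h,Z^\Lambda]u .
\]
The algebraically crucial point is that $[L_h,Z^\Lambda]u$ contains no term $h^{\alpha\beta}\partial_\alpha\partial_\beta u_\Lambda$: the top-order derivative of $u$ is never multiplied by an undifferentiated coefficient, as one sees already from $[L_h,\partial_i]u=-(\partial_i h^{\alpha\beta})\partial_\alpha\partial_\beta u+(\partial_i W^\beta)\partial_\beta u$. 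Hence $[L_h,Z^\Lambda]u$ is a sum of terms of the form (vector fields applied to $h$ or $W$)$\,\cdot\,$(derivatives of $u_{\Lambda'}$, $|\Lambda'|\le N$) in which the complementary factor is always differentiated, and by \eqref{metricform}--\eqref{derivcoeff}, \eqref{W-estfar}--\eqref{W-est} and \eqref{metricvsu} (together with Lemma~\ref{wavecoordscommut}) each such product is dominated by $\partial h_{\le\Lambda}$, $h_{\le\Lambda}$ or $W_{\le\Lambda}$ times $\partial u_{\le N}$ or $u_{\le N}$, of which one factor is of order $\le N/2$ and hence bounded pointwise through \eqref{ptwseapbds}, \eqref{metricdecay}. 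The Schwarzschild commutator $[\Box_{g_S},Z^\Lambda]u$ vanishes for $Z=\partial_t,\Omega$; for $\partial_i$ and $S$ it produces only error terms with coefficients controlled by $\partial g_S=O(r^{-2})$ and, for $S$, by the interpolation $\tilde r\neq\rs$, which one handles as in \cite{MMTT} using the improved norms $LE_{S,\delta}$ of Section~2. By the Section~3 hypotheses (concretely \eqref{metricdecay}, \eqref{badpart2}, \eqref{metricvsu}) the metric $g$ along $u$ meets the hypotheses of Theorem~\ref{LE}, so \eqref{LES} applies to each $u_\Lambda$.

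Applying \eqref{LES} to each $u_\Lambda$ and summing over $|\Lambda|\le N$, one gets on the left $\|\partial u_{\le N}\|_{L^\infty[t_0,t_1]L^2}^2+\|u_{\le N}\|_{LE_S^1[t_0,t_1]}^2$, and on the right $\|\partial u_{\le N}(t_0)\|_{L^2}^2$, the photonsphere integral $\sum_{|\Lambda|\le N}\int_{\M_{[t_0,t_1]}^{ps}}\tfrac{\epsilon}{t}|\partial u_\Lambda|^2\,dV_g$, and the source contributions obtained by splitting each $F_\Lambda=F_{1,\Lambda}+F_{2,\Lambda}$ with $F_{2,\Lambda}$ supported near $r=3M$. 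Away from $r=3M$ and from the cone $\rs\sim t$, the decay estimates \eqref{intrm}, \eqref{badpart}, \eqref{goodpart} --- supplemented near the cone by the improved bounds \eqref{badpart2}--\eqref{nullcond} for the null component $h^{\uL\uL}$, extracted via \eqref{nullframeexpansion}, \eqref{huLuLcomponent} --- leave exactly enough room that $\int|F_{1,\Lambda}|\big(|\partial u_\Lambda|+r^{-1}|u_\Lambda|\big)\,dV_g$ is dominated, after Cauchy--Schwarz and Lemmas~\ref{tangweight} and \ref{Hardylemma} applied verbatim as in the proof of Theorem~\ref{LE}, by $\epsilon$ times the left-hand side plus absolutely convergent integrals of strictly lower order.

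The genuinely delicate contributions are those near the trapped set $r=3M$, where $LE_S^1$ degenerates --- the weight $(1-3M/r)$ multiplies $\partial_t u$ and $\ang u$ there --- so that commuting with $\Omega$ or $S$, which reinstate exactly those derivatives without the vanishing factor, is the step at which a derivative loss would occur. Here the $\partial_t$-commutators carry the time-integrable weight $\la t\ra^{-1-\delta}$ of \eqref{cpt2}; the $\Omega$- and $S$-commutators produce terms in which the radial Morawetz multiplier $b(r)$ vanishes to first order at $r=3M$ while $\partial_r h$ and $h$ decay only like $\epsilon\la t\ra^{-1/2}$ by \eqref{cpt1}, \eqref{cpt3}, and an arithmetic--geometric-mean splitting then writes each as a piece $\lesssim(r-3M)^2|\partial u_\Lambda|^2$, absorbed by the non-degenerate part of $Q[g_S,X,q,m]$, plus a piece $\lesssim\tfrac{\epsilon}{t}|\partial u_\Lambda|^2$; finally the first-order terms involving $W$ are routed into $F_{2,\Lambda}$, for which $\int_{\M_{[t_0,t_1]}^{ps}}|\partial_{\le 1}F_{2,\Lambda}|^2\,dV_g$ is controlled, using \eqref{W-est} and the pointwise low-order bounds \eqref{ptwseapbds}, by $\epsilon^2\|u_{\le N}\|_{LE_S^1[t_0,t_1]}^2$ plus an absolutely convergent integral. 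This is precisely where the extra hypotheses \eqref{W-est}, \eqref{cpt1}--\eqref{cpt3} and the $F_2$ slot in \eqref{LES} are used, and the rate $\la t\ra^{-1/2}$ there is the critical threshold; any slower decay would break the scheme.

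Collecting everything and absorbing the $\epsilon$-small terms into the left-hand side, with $\mathcal E_N[t_0,t]:=\|\partial u_{\le N}\|_{L^\infty[t_0,t]L^2}^2+\|u_{\le N}\|_{LE_S^1[t_0,t]}^2$ one obtains
\[
\mathcal E_N[t_0,t]\ \lesssim\ \|\partial u_{\le N}(t_0)\|_{L^2}^2\ +\ C_N\,\epsilon\int_{t_0}^{t}\frac{\mathcal E_N[t_0,s]}{s}\,ds ,
\]
the $1/s$ kernel being furnished by the photonsphere terms; Gr\"onwall's inequality then gives $\mathcal E_N[t_0,t_1]\lesssim t_1^{C_N\epsilon}\|\partial u_{\le N}(t_0)\|_{L^2}^2$, which on taking square roots is \eqref{higherLES}. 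I expect the main obstacle to be the bookkeeping of the last paragraph near $r=3M$: making every commutator term there either time-integrable or absorbable modulo the $\epsilon/t$ Gr\"onwall term, while respecting the $LE_S^1$-degeneracy at the photonsphere and the merely $\la t\ra^{-1/2}$ decay of the metric --- this is exactly what the conditions \eqref{W-est}, \eqref{cpt1}--\eqref{cpt3} and the derivative-losing $F_2$ term in Theorem~\ref{LE} are designed to accommodate.
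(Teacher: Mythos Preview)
Your overall shape---commute, apply Theorem~\ref{LE}, sum, Gr\"onwall---is right, but the proposal has a genuine gap at the photonsphere that would prevent it from closing.

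The problem is your treatment of the spatial derivatives $\partial_i$. You write that $[\Box_{g_S},\partial_i]$ ``produces only error terms with coefficients controlled by $\partial g_S=O(r^{-2})$''. That bound is only valid for large $r$; near $r=3M$ the coefficients $\partial_r g_S^{\alpha\beta}$ are $O(1)$, and the commutator $[\Box_{g_S},\partial_r]u$ is a genuine second-order term of size $|\partial^2 u|$ with no smallness. Placed in the $F_1$ slot of \eqref{LES} it yields $\int_{\M^{ps}}|\partial^2 u|\,|\partial\partial_r u|$, which is not controlled by $\|u\|_{LE_S^1}^2$ because that norm degenerates at $r=3M$; placed in the $F_2$ slot it costs an extra derivative and is still top order (a single $\partial$ counts as one, so $\partial_{\le1}[\Box_{g_S},\partial_r]u$ is order $|\Lambda|+2$). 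Either way the estimate does not close. The paper avoids this entirely: it never commutes with $\partial_r$ directly. Instead it commutes only with $\partial_t$ (which commutes exactly with $\Box_{g_S}$, and whose metric commutator $[\partial_t,L_h]$ carries the improved decay \eqref{t-derivest}, \eqref{tderivglobal}), and then recovers all spatial derivatives through the elliptic estimate Lemma~\ref{elliptic}, which trades $\partial^2 u_J$ for $\partial\partial_t u_{\le|J|}$ plus lower order. This ``time-derivatives first, then elliptic'' mechanism is the missing idea in your argument.

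Two further points. First, you route the $W$ terms into $F_2$; the paper does not---those are handled in $F_1$ via \eqref{W-est} and the pointwise bound $|\partial u|\lesssim\epsilon t^{-1/2}$ (see \eqref{vfcpt2}, \eqref{vfcpt2hgh}). What actually goes into $F_2$ is $\chi_{ps}[\Box_{g_S},S]u$: since $S$ counts as three derivatives, $\int|\partial_{\le1}F_2|^2\lesssim\|\partial_{\le2}u\|_{LE_S^1}^2$ is strictly lower order and falls to the induction hypothesis (see \eqref{vfcpt3}). Second, to bound $|\partial^2 u_{\Lambda_2}|$ pointwise when $|\Lambda_2|\le N/2$ (needed in \eqref{vfcpt}--\eqref{vfcone} and \eqref{vfinter3hgh}), the paper invokes the Klainerman--Sideris estimate Lemma~\ref{KS1}, which you do not mention. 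Finally, the induction is not a single sum over $\Lambda$: it is first an induction on the number of $\partial_t$'s (closed via Lemma~\ref{elliptic}), and then a layered induction on the number of $\Omega,S$ vector fields, with a secondary ordering in the number of $S$'s because $[\Box_{g_S},S]$ feeds back $\Omega$-terms via \eqref{S-far}; see the sets $\mathcal P^l$ and \eqref{Zenest}.
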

\begin{proof}
 We start by proving the following estimate for $W$:
\begin{lemma}\label{wavecoordscommut}
 If $W$ is defined as in \eqref{W-def}, then
\begin{equation}\label{wavecoords2}
W_{\Lambda} = S^Z(1) \partial u_{\Lambda} + W_{\Lambda}^{low}
\end{equation}
where the lower order term $W_{\Lambda}^{low}$ satisfies
\begin{equation}\label{Wlot3M}
|W_{\Lambda}^{low}| \lesssim |u_{\leq\Lambda}|, \qquad r\leq R_1
\end{equation}
\begin{equation}\label{Wlotfar}
|W_{\Lambda}^{low}| \lesssim |\partial u_{<\Lambda}| + (\frac1{r^2}+ \frac{t^{1+\delta}}{r^{1+\delta}\la t-\rs\ra^{1/2+\delta}}) |u_{\leq \Lambda}|, \qquad R_1^* \leq r^*
\end{equation}
\end{lemma}
\begin{proof}
 We can write
\[
W^{\beta} = \partial_{\alpha} h^{\alpha\beta} + \frac12 \Bigl(g_S^{\alpha\beta} \frac{\partial_{\alpha}|g_S|}{|g_S|} - g^{\alpha\beta} \frac{\partial_{\alpha}|g|}{|g|}\Bigr)
\]

 We clearly have that
\[
(\partial_{\alpha} h^{\alpha\beta})_{\Lambda} = \partial_{\alpha} h^{\alpha\beta}_{\Lambda} + S^Z(1) \partial h_{<\Lambda}
\]

 On the other hand, the second term satisfies
\[
\Bigl(g_S^{\alpha\beta} \frac{\partial_{\alpha}|g_S|}{|g_S|} - g^{\alpha\beta} \frac{\partial_{\alpha}|g|}{|g|}\Bigr)_{\Lambda} = S^Z(1) \Bigl(((g_S)_{\leq \frac{|\Lambda|}2} + h_{\leq \frac{|\Lambda|}2}) \partial h_{\leq \Lambda} + ((\partial g_S)_{\leq \frac{|\Lambda|}2} + \partial h_{\leq \frac{|\Lambda|}2}) h_{\leq \Lambda}\Bigr)
\]

 We also know that
\[
\Bigl|(g_S)_{\leq \frac{|\Lambda|}2}\Bigr| \lesssim 1, \quad  \Bigl|(\partial g_S)_{\leq \frac{|\Lambda|}2}\Bigr| \lesssim r^{-2}
\]

 The conclusion now follows by using \eqref{metricdecay} and \eqref{metricvsu}.
\end{proof}

  Our next remark is that, due to the (proof of) Lemma~\ref{wavecoordscommut} combined with \eqref{metricdecay}, \eqref{t-derivest} and the fact that $\partial_t g_S = 0$ we obtain that
\begin{equation}\label{tderivglobal}
|\partial_t h_{\Lambda}| + |\partial_t W_{\Lambda}| \lesssim \frac{\epsilon}{\la t\ra}, \qquad |\Lambda|\leq  \frac{N}2
\end{equation}

 We will now prove the following very useful elliptic estimate:
\begin{lemma}\label{elliptic}
 Let $u$ be as in the theorem above, and $J$ be a multiindex with $0\leq |J|\leq N-1$. Let $t_0\leq t\leq t_1$. We have
\begin{equation}\label{ell-est} \begin{split}
\|\partial^2 u_J(t)\|_{L^2} + \|\partial u_J\|_{LE_S^1[t_0, t]} & \lesssim \|\partial \partial_{\leq 1}u_J(t_0)\|_{L^2} + \|\partial_t \partial_{\leq 1} u_{\leq |J|}(t)\|_{L^2} + \|\partial u_{\leq |J|}(t)\|_{L^2} \\ & + \|\partial_t u_{\leq |J|}\|_{LE_S^1[t_0, t]} +  \|u_{\leq |J|}\|_{LE_S^1[t_0, t]}
\end{split} \end{equation}
\end{lemma}
\begin{proof}
 We start with the case $|J|=0$. Clearly $u$ satisfies the equation
\begin{equation}\label{ell}
\mathscr{L}_{0} u = \mathscr{L}_t u + \Box_g u
\end{equation}
where
\begin{equation}\label{ellt}
\mathscr{L}_t u = -\frac{1}{\sqrt{|g|}}\partial_t (\sqrt{|g|} g^{tj}\partial_{j} u) -\frac{1}{\sqrt{|g|}}\partial_{j} (\sqrt{|g|} g^{tj}\partial_t u) -\frac{1}{\sqrt{|g|}}\partial_t (\sqrt{|g|} g^{tt}\partial_{t} u)
\end{equation}
\begin{equation}\label{ell0}
\mathscr{L}_0 u = \frac{1}{\sqrt{|g|}}\partial_i (\sqrt{|g|} g^{ij}\partial_{j} u)
\end{equation}

The most important thing here is that $\mathscr{L}_0$ is a second order differential operator which is elliptic in the region $r>2M+\varepsilon$ for some $\varepsilon \ll M$.

Consider now smooth cutoffs $\chi_{eh}$ and $\chi_{out}$, so that $\chi_{eh} = 1$ when $r_e\leq r\leq 2M+2\varepsilon$, $\chi_{eh} = 0$ when $r\geq 2M+3\varepsilon$, and $\chi_{out}=1$ when $2M+2\varepsilon\leq r$, $\chi_{out}=0$ when $r<2M+\varepsilon$.

We will first prove that
\begin{align}\label{spatderivout1}
\|\chi_{out}\partial^2 u(t)\|_{L^2}  &\lesssim  \|\partial_t \partial_{\leq 1} u(t)\|_{L^2} +\|\partial u(t)\|_{L^2} + \|\Box_g u(t)\|_{L^2}\\
\label{spatderivout2}
\|\chi_{out}\partial u\|_{LE_S^1[t_0, t]} &\lesssim \|\partial \partial_{\leq 1} u(t_0)\|_{L^2} + \|\partial_t u\|_{LE_S^1[t_0, t]} +  \|u\|_{LE_S^1[t_0, t]}  + \bigl\|\Box_g u\|_{{LE[t_0, t]}}
\end{align}

To prove \eqref{spatderivout1} we use standard elliptic estimates on constant time slices. Indeed, multiplying \eqref{ell0} by $\partial_{k} (\chi_{out}^2 \partial_{k} u)$ and integrating by parts yields
\[\begin{split}
\int \chi_{out}^2 g^{ij}(\partial_{jk}u)(\partial_{ik}u) \sqrt{|g|} dx =  \int (\mathscr{L}_0 u) \partial_{k} (\chi_{out}^2 \partial_{k} u) \sqrt{|g|} dx - \int \partial_{i} (\chi_{out}^2) \partial_k (\sqrt{|g|} g^{ij}\partial_{j} u) (\partial_k u) dx
\end{split}\]
 There are no boundary terms at infinity since $u$ is compactly supported.

 We can now use \eqref{ell} and Cauchy-Schwarz to bound the right hand side. The second term can easily be estimated by using \eqref{metricdecay}
\[\begin{split}
& \Bigl|\int \partial_{i} (\chi_{out}^2) \partial_k (\sqrt{|g|} g^{ij}\partial_{j} u) (\partial_k u) dx \Bigl| \leq \int \Bigl|\partial_{i} (\chi_{out}^2) \partial_k (\sqrt{|g|} g^{ij})(\partial_{j} u)(\partial_k u)\Bigr| dx \\
& + \frac12 \int \Bigl|\partial_j [\partial_{i} (\chi_{out}^2) (\sqrt{|g|} g^{ij})] (\partial_{k} u)^2\Bigr| dx \lesssim \|\partial u(t)\|_{L^2}^2
\end{split}\]

 On the other hand, by \eqref{ellt} we have
\[\begin{split}
|\mathscr{L}_t u| \lesssim |\partial \partial_t u| + |\partial u|,\qquad\quad
|\partial_{k} (\chi_{out}^2 \partial_{k} u)| \lesssim \chi_{out}^2 |\partial^2 u| + |\chi_{out}' \partial u|
\end{split}\]

We now obtain by Cauchy-Schwarz that
\[\begin{split}
\Bigl| \int (\mathscr{L}_0 u) \partial_{k} (\chi_{out}^2 \partial_{k} u) \sqrt{|g|} dx \Bigr| \leq C_{\td}\Bigl(\|\partial_t \partial_{\leq 1} u(t)\|_{L^2}^2 +\|\partial u(t)\|_{L^2}^2 + \|\Box_g u(t)\|_{L^2}^2\Bigr) + \td \|\chi_{out}\partial^2 u(t)\|_{L^2}^2
\end{split}\]
for any small $\td$ and some large $C_{\td}$. After summing over $k$, using the ellipticity of $\mathscr{L}_0$ on the support of $\chi_{out}$ and taking $\td$ small enough we can absorb the last term to the left hand side, and \eqref{spatderivout1} follows.

 We are left with estimating the space-time term, \eqref{spatderivout2}.

 We start by estimating the lower order term on the left hand side. Let $\chi_{ps}$ denote a cutoff function near $r=3M$. Away from the photosphere we already have the better estimate
\begin{equation}\label{lotout}
\|(1-\chi_{ps})r^{-1} \partial u\|_{LE[t_0, t]} \lesssim \|(1-\chi_{ps}) \partial u\|_{LE[t_0, t]} \lesssim \|\partial u\|_{LE[t_0, t]}
\end{equation}

 Near the photonsphere, we multiply \eqref{ell} by $\chi_{ps}^2 u$, integrate by parts and use Cauchy-Schwarz. We obtain
\begin{equation}\label{lotps}
 \| \chi_{ps} \partial u\|_{L^2[t_0, t]L^2} \lesssim \|\partial u(t_0)\|_{L^2} + \|\partial u(t)\|_{L^2} + \|\partial_t u\|_{LE_S^1[t_0, t]} +  \|u\|_{LE_S^1[t_0, t]} + \|\chi_{ps} \Box_g u\|_{L^2[t_0, t]L^2}
\end{equation}
which takes care of the lower order term in $\|\partial u\|_{LE_S^1[t_0, t]}$ near the photosphere and hence everywhere.

 For $R>4M$ dyadic we can multiply \eqref{ell} by $\frac1R \partial_i \chi_R^2 \partial_i u$, integrate by parts and apply Cauchy-Schwarz to obtain
\[\begin{split}
\|R^{-1/2} \chi_R \partial\partial_i u\|_{L^2[t_0, t]L^2}  \leq C \Bigl(\|\partial_t u\|_{LE_S^1[t_0, t]} +  \|u\|_{LE_S^1[t_0, t]} + \|\Box_g u\|_{LE(r>7M/2)}\Bigr)  + \frac12 \|R^{-1/2} \chi_R\partial\partial_i u\|_{L^2[t_0, t]L^2}
\end{split}\]
where the constant $C$ is independent of $R$. After absorbing the last term to the LHS, summing over $i$ and taking the supremum over $R$ we obtain
\begin{equation}\label{hotout}
\| \partial^2 u\|_{LE(r>4M)}\| \lesssim \|\partial_t u\|_{LE_S^1[t_0, t]} +  \|u\|_{LE_S^1[t_0, t]} + \|\Box_g u\|_{LE(r>7M/2)}
\end{equation}

 In the compact region $r<4M$, we first multiply \eqref{ell} by $\partial_i \chi_{M}^2 (r-3M)^2 \partial_i u$, where $\chi_{M}$ is a cutoff function supported in $2M+\varepsilon \leq r\leq 5M$ which is identically $1$ when $2M+2\varepsilon \leq r\leq 4M$. After integrating by parts and applying Cauchy-Schwarz we obtain
\begin{equation}\label{hotps}
\|\chi_{M}(r-3M)\partial^2 u\|_{L^2L^2} \lesssim  \|\partial_t u\|_{LE_S^1[t_0, t]} +  \|u\|_{LE_S^1[t_0, t]} + \|\chi_{M} (r-3M) \Box_g u\|_{L^2[t_0, t]L^2}
\end{equation}

 Finally, we can multiply \eqref{ell} by $\partial_r \chi_{M}^2 \partial_r u$, integrate by parts and apply Cauchy-Schwarz to obtain
\begin{equation}\label{hotr}\begin{split}
 \|\chi_{M}\partial \partial_r u\|_{L^2[t_0, t]L^2} & \lesssim \|\partial_t \partial_{\leq 1} u\|_{L^{\infty}[t_0, t]L^2} + \|\chi_{M} \partial u\|_{L^2[t_0, t]L^2} + \|\partial_t u\|_{LE_S^1[t_0, t]} +\\ &  \|u\|_{LE_S^1[t_0, t]}  + \|\chi_{M} \Box_g u\|_{L^2[t_0, t]L^2} + \|\chi'_{M} \Box_g u\|_{L^2[t_0, t]L^2}
\end{split}\end{equation}

 The proof of \eqref{spatderivout2} is complete by \eqref{lotout}, \eqref{lotps}, \eqref{hotout}, \eqref{hotps}, and \eqref{hotr}.

  We now estimate the part near the event horizon. Let $v=\chi_{eh} u$. The function $v$ satisfies the equation
\[
 \Box_g v = [\Box_g, \chi_{eh}] u + \chi_{eh}\Box_g u
\]

 By using, for example, Lemma 4.4 of \cite{MTT} we obtain that
\[
\|\partial^2 v\|_{L^{\infty}L^2} + \|\partial v\|_{H^1} \lesssim \|\partial v(t_0)\|_{H^1} + \|\Box_g v\|_{H^1}
\]
and as a consequence
\begin{equation}\label{spatderivin}
\begin{split}
\|\chi_{eh}\partial^2 u(t)\|_{L^2} + \|\chi_{eh}\partial u\|_{LE_S^1[t_0, t]}\lesssim \|\partial\partial_{\leq 1} u(t_0)\|_{L^2} + \|\partial u\|_{H^1(2M+2\varepsilon<r<2M+3\varepsilon)} + \|\Box_g u\|_{H^1(r<2M+3\varepsilon)}
\end{split}\end{equation}

 The middle term on the RHS is supported in the region where $\chi_{out}=1$, so it can be controlled by \eqref{spatderivout2}.The desired conclusion \eqref{ell-est} when $|J|=0$ now follows from \eqref{spatderivout1}, \eqref{spatderivout2} and \eqref{spatderivin}

 Assume now that $J\!=\!(i, j, k)$ is a multi-index with $1\!\leq |J|\! = i+3j\!+3k\leq\! N$ and proceed by induction on $|J|$.
 We have
\[
\mathscr{L}_0 u_J = \mathscr{L}_t u_J + \Box_g u_J
\]

In order to estimate the last term, we observe that since $\Box_g u =0$ we have
\[
|\Box_g u_J| \lesssim (|(g_S)_{\leq |J|}| + |h_{\leq |J|/2}|) \cdot |\partial u_{\leq |J|}| + |\partial u_{\leq |J|/2}| \cdot |h_{\leq |J|}|
\]
Due to \eqref{u-decay} and \eqref{metricdecay} we obtain that
\begin{equation}\label{locommut}
|\Box_g u_J| \lesssim |\partial u_{\leq |J|}| + \frac{1}{r} |u_{\leq |J|}|
\end{equation}

Since by Hardy's inequality we have
\begin{equation}\label{elllot}
\|\frac{1}{r} u_{\leq |J|}(t)\|_{L^2} + \|\frac{1}{r} u_{\leq |J|}\|_{LE[t_0, t]} \lesssim \| \partial u_{\leq |J|}(t)\|_{L^2} + \|\partial u_{\leq |J|}\|_{LE[t_0, t]}
\end{equation}
and one can prove elliptic estimates as above. Away from the event horizon, using \eqref{spatderivout1} and \eqref{spatderivout2} applied to $u_J$ in conjunction with \eqref{locommut} and \eqref{elllot}, we obtain
\begin{equation}\label{hghspatderivout} \begin{split}
& \|\chi_{out}\partial^2 u_J\|_{L^{\infty}[t_0, t]L^2} + \|\chi_{out}\partial u_J\|_{LE_S^1[t_0, t]} \lesssim \\ & \|\partial_t \partial u_{\leq |J|}\|_{L^{\infty}[t_0, t]L^2} + \|\partial u_{\leq |J|}\|_{L^{\infty}[t_0, t]L^2} + \|\partial_t u_{\leq |J|}\|_{LE_S^1[t_0,t]} +  \|u_{\leq |J|}\|_{LE_S^1[t_0,t]}
\end{split} \end{equation}
which suffices away from the event horizon by the induction hypothesis.

On the other hand, near the event horizon, let $v=\chi_{eh} u$ as before. Our goal will be to prove that
\begin{equation}\label{hghspatderivin}\begin{split}
\|\partial^2 v_J\|_{L^{\infty}[t_0, t]L^2} + \|\partial v_J\|_{LE_S^1[t_0, t]} & \lesssim \|\partial^2 v_J(t_0)\|_{L^2} + \epsilon \|\partial v_J\|_{LE_S^1[t_0, t]} + \\ & \|u_{\leq |J|}\|_{LE_S^1[t_0,t]} + \| \chi_{out} \partial u_J\|_{LE_S^1[t_0, t]}
\end{split}\end{equation}

For small enough $\epsilon$ the second term on the RHS can be absorbed on the left, while the last term on the RHS can be estimated from \eqref{hghspatderivout}. The conclusion \eqref{ell-est} now follows from \eqref{hghspatderivout} and \eqref{hghspatderivin}.

By Lemma 4.4 of \cite{MTT} we get
\begin{equation}\label{lemmaMTTJ}
\|\partial^2 v_{J}\|_{L^{\infty}[t_0, t]L^2} + \|\partial v_{J}\|_{LE_S^1[t_0, t]} \lesssim \|\partial^2 v_{J}(t_0)\|_{L^2} + \|\Box_g v_{J}\|_{H^1[t_0,t]}
\end{equation}
Clearly $v_J$ satisfies
\begin{equation}\label{incommut}
 \Box_{g} v_J = [\Box_{g}, Z^J] v + (\Box_g v)_J
\end{equation}
Since $[\Box_g, \chi_{eh}]$ is supported in the where $\chi_{out}=1$, the second term on the RHS of \eqref{incommut} can be controlled:
\[
\|(\Box_g v)_J\|_{H^1[t_0,t]} \lesssim \| \chi_{out} \partial u_J\|_{LE_S^1[t_0, t]} + \| \chi_{eh} (\Box_g u)_J\|_{H^1[t_0,t]}
\]
 For the first term we have
\[
\|[\Box_{g}, Z^J] v\|_{H^1[t_0,t]} \lesssim \|\partial v_J\|_{H^1[t_0,t]} + \|v_{\leq |J|}\|_{H^1[t_0,t]}
\]
The second term on the RHS is controlled by the induction hypothesis, so \eqref{hghspatderivin} will follow if we prove
\begin{equation}\label{boxestin}
\|\partial v_{J}\|_{H^1[t_0,t]} \lesssim \|\partial^2 v_J(t_0)\|_{L^2} + \|v_{\leq |J|}\|_{H^1[t_0,t]} + \|(\Box_g v)_J\|_{H^1[t_0,t]}
\end{equation}

 Let us start with the case of no vector fields, namely $j=k=0$.
We will mimic the proof of Lemma 4.4 of \cite{MTT} in our case. The first key observation is that, since
\[
[\Box_{g_S}, \partial_t] = [\Box_{g_S}, \partial_{\omega}] = 0
\]
we obtain that
\begin{equation}\label{tomega}
\Bigl| [\Box_g, \partial_{t,\omega}]u \Bigr| \lesssim \epsilon |\partial_{\leq 2} u|
\end{equation}

 Let $v_{\tJ}$ denote the set $\{ \partial_t^{i_1} \partial_{\omega}^{i_2} v, i_1+i_2\leq i \}$. By Lemma 4.4 of \cite{MTT} we get
\[
 \|\partial v_{\tJ}\|_{H^1[t_0, t]} \lesssim \|\partial^2 v_{\tJ}(t_0)\|_{L^2} + \|\Box_g v_{\tJ}\|_{H^1[t_0,t]}
\]
and by using \eqref{incommut} and \eqref{tomega}
\[
\|\Box_g v_{\tJ}\|_{H^1[t_0,t]} \lesssim \epsilon \|\partial^2 v_{\tJ} \|_{H^1[t_0, t]} +\|\partial_{\leq |\tJ|} v\|_{H^1[t_0, t]} + \|(\Box_g v)_{\tJ}\|_{H^1[t_0,t]}
\]

 After absorbing the first term on the RHS to the left, we obtain \eqref{boxestin} when all derivatives are either time or angular derivatives.

 On the other hand, the $\partial_r$ derivatives do not commute nicely with the Schwarzschild metric, so we need to use the red shift effect near the event horizon. We add $\partial_r$ derivatives one by one by induction. More specifically, we use the proof of (4.21) from \cite{MTT}, which asserts that for all functions $w$ supported near the event horizon and $\gamma_1>0$ on the support of $w$, then
\begin{equation}\label{rderivin}\begin{split}
\|\partial \partial_r w\|_{L^{\infty}[t_0, t]L^2} + \| \partial_r w\|_{H^1[t_0, t]} & \lesssim \|\partial^2 w(t_0)\|_{L^2} + \| (\Box_g + \gamma_1 \partial_r) \partial_r w\|_{H^1[t_0, t]}
\end{split}\end{equation}

 Assume now that $v_J = \partial_{r}^l u_{\tJ}$ with $l>0$. After commuting we obtain
\[
\Box_g v_J = - l (\partial_r g^{rr}) \partial_{r}^{l+1} v_{\tJ} + O(\epsilon) \partial_{\leq |J|+1} v + \partial_{\leq |J|} v
\]
 The key observation is that, since the metric $g$ is $O(\epsilon)$ perturbation of $g_S$ in $C^1$ and $\partial_r g_S^{rr}(2M)>0$ one has
\[
l (\partial_r g^{rr}) > 0
\]
near $r=2M$. By applying \eqref{rderivin} to $w = \partial_{r}^{l-1} v_{\tJ}$ with $\gamma_1 = l (\partial_r g^{rr})$ we get
\[
\|\partial v_{J}\|_{H^1[t_0, t]} \lesssim \|\partial v_{J}(t_0)\|_{L^2} + \epsilon \|\partial_{\leq |J|} v\|_{H^1[t_0, t]} + \|\partial_{\leq |J|-1} v\|_{H^1[t_0, t]}
\]
 The middle term on the RHS can be absorbed to the left, and \eqref{boxestin} in the case $j\!=\!k\!=\!0$ follows by induction.

 For the general case, the key observation is that commuting $\Box_{g}$ with $\Omega$ or $S$ yields lower order terms, since each one of these vectors counts as three derivatives, while the commutator is of order at most $2$. We can now apply \eqref{boxestin} to $i$ derivatives of the function $v_{\Omega S}:=\Omega^j S^k v$ . We obtain
\[\begin{split}
\|\partial v_{J}\|_{H^1[t_0,t]} = & \|\partial \partial^i (v_{\Omega S})\|_{H^1[t_0,t]}  \lesssim \|\partial^2 v_{J}(t_0)\|_{L^2} + \|v_{\leq |J|}\|_{H^1[t_0,t]} + \|\partial^i(\Box_{g} v_{\Omega S})\|_{H^1[t_0, t]} \\ & \lesssim \|\partial^2 v_{J}(t_0)\|_{L^2} + \|v_{\leq |J|}\|_{H^1[t_0,t]} + \|(\Box_g v)_J\|_{H^1[t_0,t]}
\end{split}\]
where in the last line we used that
\[
[\Box_g, \Omega^jS^k] v \in S^Z(1) v_{\leq 3j+3k}
\]
This finishes the proof of \eqref{boxestin}.
\end{proof}

 The purpose of the above lemma is to replace spatial derivatives by time derivatives when performing the commutations. This is crucial near the photosphere, where the time derivative comes with an extra decay factor of $t^{-1}$ compared to spatial derivatives.

 The following Klainerman-Sideris type estimate will provide better control of $\partial^2 u$.

\begin{lemma}\label{KS1}  Assume that $g$ is a Lorentzian metric satisfying the conditions \eqref{metricdecay}.

 Then for any multi index $|\Lambda|\leq \frac{N}2$ we have when $r\geq 2R_1$:
\begin{equation}\label{improvetwoderivs}
|\partial^2 u_{\Lambda}| \lesssim \frac{t}{r\la t-\rs\ra} |\partial u_{\leq |\Lambda|+3}| + \frac{t}{\la t-\rs\ra} |(\Box_g u)_{\leq|\Lambda|}|
\end{equation}
\end{lemma}

\begin{proof}

 For the duration of this proof, we will let $\Box$ denote the Minkowski d'Alembertian with respect to the $x^*$ coordinates,
\[
\Box = -\partial_t^2 + \sum_{i=1}^3 \partial_{x_i^*}^2
\]

 Let us start by proving the case $\Lambda=0$. Since
 \[
 \partial_{A,B} \approx \frac1{r} \Omega, \qquad \partial_t = \frac1{t} S - \frac{\rs}{t}\partial_{\rs}
 \]
 we get that
 \begin{equation}\label{KStwoderivs}
 |\partial^2 u| \lesssim \frac{1}r |\partial u_{\leq 3}| + |\partial_{\rs}^2 u|
 \end{equation}

  On the other hand, we have for $\partial_{\rs}^2 u$ (see, for example, \cite{KS}):
 \begin{equation}\label{KSest}
 |\partial_{\rs}^2 u| \lesssim \frac1{\la t-\rs\ra} |\partial u_{\leq 3}| + \frac{t}{\la t-\rs\ra} |\Box u|
 \end{equation}

 We estimate
\[
|(\Box_g - \Box) u| \lesssim |\Box_g u - \Box_{S} u| + |\Box_{S} u - \Box u| \lesssim |h||\partial^2 u| + |\partial h||\partial u| + \frac{1}{r} |\partial u_{\leq 1}|
\]
and taking into account \eqref{metricdecay}, \eqref{KStwoderivs} we get
\[ \begin{split}
\frac{t}{\la t-\rs\ra} |\Box u| &  \lesssim \frac{t}{\la t-\rs\ra} |\Box_g u| + \frac{\epsilon}{\la t-\rs\ra^{1/2}} |\partial^2 u| + \frac{\epsilon t}{r \la t-\rs\ra}|\partial u| + \frac{t}{r\la t-\rs\ra} |\partial u_{\leq 3}| \\
& \lesssim \frac{t}{\la t-\rs\ra} |\Box_g u| + \frac{\epsilon}{\la t-\rs\ra^{1/2}} |\partial_{\rs}^2 u| + \frac{t}{r\la t-\rs\ra} |\partial u_{\leq 3}|
\end{split} \]

We now plug back into \eqref{KSest} and absorb the $\partial_{\rs}^2 u$ term to the left side. We obtain
\[
|\partial_{\rs}^2 u| \lesssim \frac{t}{r\la t-\rs\ra} |\partial u_{\leq 3}| + \frac{t}{\la t-\rs\ra} |\Box_g u|
\]

which in conjunction with \eqref{KStwoderivs} implies \eqref{improvetwoderivs} for $\Lambda = 0$.

 The proof now follows by induction. Let us assume that \eqref{improvetwoderivs} holds for all multiindices with  $|\Lambda_1| < |\Lambda|$. We will prove \eqref{improvetwoderivs} for $\Lambda$.

 By applying \eqref{KSest} to $u_{\Lambda}$ we obtain
\begin{equation}\label{KSLbdest}
|\partial_{\rs}^2 u_{\Lambda}| \lesssim \frac1{\la t-\rs\ra} |\partial u_{\leq |\Lambda|+3}| + \frac{t}{\la t-\rs\ra} |\Box u_{\Lambda}|
\end{equation}

 After commuting with the vector fields we obtain
 \[
|\Box u_{\Lambda}| \lesssim |(\Box u)_{\Lambda}| + |(\Box u)_{\leq |\Lambda|-3}|
 \]

Clearly
\[
|(\Box_S u - \Box u)_{\Lambda}| \lesssim r^{-1} (|\partial^2 u_{\Lambda}| + |\partial u_{\leq |\Lambda|}|)
\]
and using \eqref{metricdecay}
\[
|(\Box_g u - \Box_S u)_{\Lambda}| \lesssim \frac{\epsilon\la t-\rs\ra^{1/2}}{t} (|\partial^2 u_{\Lambda}| + |\partial^2 u_{\leq|\Lambda|-1}|) + \frac{\epsilon}{r}|\partial u_{\leq |\Lambda|}||
\]

 The last two inequalities combined with \eqref{KSest} imply
\[\begin{split}
\frac{t}{\la t-\rs\ra} |(\Box u)_{\Lambda}| \lesssim & \frac{\epsilon}{\la t-\rs\ra^{1/2}} (|\partial^2 u_{\Lambda}| +|\partial^2 u_{\leq|\Lambda|-1}|) + \frac{\epsilon t}{r \la t-\rs\ra}|\partial u_{\leq |\Lambda|}|| \\ & + \frac{t}{r\la t-\rs\ra} |\partial u_{\leq |\Lambda|+3}|  + \frac{t}{\la t-\rs\ra} |(\Box_g u)_{\Lambda}|
\end{split}\]

 A similar estimate holds for $|(\Box u)_{\leq |\Lambda|-3}|$. By using \eqref{KStwoderivs} (applied to $u_{\Lambda}$) we can plug back into \eqref{KSLbdest} and obtain
\[
 |\partial_{\rs}^2 u_{\Lambda}| \lesssim \frac{t}{r\la t-\rs\ra} |\partial u_{\leq |\Lambda|+3}| + \frac{t}{\la t-\rs\ra} |(\Box_g u)_{\Lambda}|
\]
which combined with \eqref{KStwoderivs} finishes the proof of \eqref{improvetwoderivs}.
\end{proof}

 Coming back to proving \eqref{higherLES}, we will proceed by induction. We first prove the result for derivatives only. Let us start with commuting with the time derivative. A simple computation gives, as $\partial_t$ and $\Box_{g_S}$ commute:
\[
\Box_g \partial_t u = F_1 + \partial_t \Box_g u
\qquad\text{where}\qquad
F_1 =  (\partial_t h^{\alpha\beta}) \partial_{\alpha}\partial_{\beta} u + (\partial_t W^{\alpha}) \partial_{\alpha} u
\]

 Let
\[
 E_1(t) = \|\partial \partial_t u(t)\|_{L^2}^2 + \|\partial_t u\|_{LE_S^1[t_0, t]}^2
\]
 By \eqref{LES} applied to $\partial_t u$ we obtain:
\begin{equation}\label{partialtcommut}
E_1(t) \lesssim \int_{\M_{[t_0,t]}} |F_1| (|\partial \partial_t u| + r^{-1} |\partial_t u|)dV_g + \int_{\M_{[t_0,t]}^{ps}}\frac{\epsilon}{\la\tau\ra} |\partial \partial_t u|^2 dV_g + \|\partial \partial_t u(t_0)\|_{L^2}^2
\end{equation}

Due to \eqref{t-derivest} we have
\[
 |\partial_t h^{\alpha\beta}| \lesssim \epsilon\la t\ra^{-1}
\]
and thus by taking Lemma~\ref{elliptic} into account
\begin{equation}\label{onetderiv1}\begin{split}
\int_{\M_{[t_0,t]}} |(\partial_t h^{\alpha\beta}) \partial_{\alpha}\partial_{\beta} u| (|\partial \partial_t u| + r^{-1} |\partial_t u|) dV_g & \lesssim \int_{\M_{[t_0,t]}} \frac{\epsilon} {\la\tau\ra} |\partial\partial_{\leq 1} u|^2 dV_g \\ & \lesssim \int_{\M_{[t_0,t]}}  \frac{\epsilon} {\la\tau\ra}  |\partial \partial_t u|^2dV_g + t^{C\epsilon} \|\partial \partial_{\leq 1} u(t_0)\|_{L^2}^2
\end{split}\end{equation}
On the other hand, by Lemma~\ref{wavecoordscommut} we can write
\[
\partial_t W^{\alpha} = S^Z(1) \partial \partial_t u + W^{low}, \qquad |W^{low}| \lesssim \epsilon |\partial_{\leq 1} u|
\]

 In the region $\frac{5M}2\leq r\leq \frac{7M}2$ we additionally have by our first technical assumption \eqref{W-est} that
\[
\partial_t W^{\alpha} = S^Z(t^{-1/2}) \partial \partial_t u + W^{low}
\]
We get
\[\begin{split}
& \int_{\M_{[t_0,t_1]}^{[r_e, R_1)}} |(\partial_t W^{\alpha}) \partial_{\alpha} u| (|\partial \partial_t u| + r^{-1} |\partial_t u|) dV_g \lesssim \int_{\M_{[t_0,t]}^{ps}} \frac{\epsilon} {\la\tau\ra} |\partial \partial_t u|^2 + \frac{\epsilon}{\la\tau\ra^{1/2}}|\partial_{\leq 1} u|^2 dV_g
\\ &+ \int_{\M_{[t_0,t_1]}^{[r_e, R_1)}\setminus \M_{[t_0,t]}^{ps}} \frac{\epsilon}{\la\tau\ra^{1/2}}|\partial_{\leq 2} u|^2 dV_g \lesssim \int_{\M_{[t_0,t]}^{ps}} \frac{\epsilon} {\la\tau\ra} |\partial \partial_t u|^2 dV_g + \epsilon \|\partial_{\leq 1} u\|_{LE_S^1[t_0, t]}^2
\end{split}\]
which by the induction hypothesis and Lemma~\ref{elliptic} implies that
\begin{equation}\label{onetderiv2}
\int_{\M_{[t_0,t_1]}^{[r_e, R_1)}} |(\partial_t W^{\alpha}) \partial_{\alpha} u| (|\partial \partial_t u| + r^{-1} |\partial_t u|) dV_g  \lesssim \int_{\M_{[t_0,t]}^{ps}} \frac{\epsilon} {\la\tau\ra}  |\partial \partial_t u|^2dV + \epsilon E_1(t) + t^{C\epsilon} \|\partial \partial_{\leq 1} u(t_0)\|_{L^2}^2
\end{equation}

 In the intermediate region $R_1^*\leq \rs\leq\frac{\tau}2$ we know by \eqref{u-decay} that
\[
|\partial_{\alpha} u| \lesssim \frac{\epsilon}{r\la t-\rs\ra^{1/2}}
\]
and thus by Lemma~\ref{wavecoordscommut} :
\begin{equation}\label{onetderiv3}\begin{split}
\int_{\M_{[t_0,t]}^{[R_1, \frac{\tau}2)}} |(\partial_t W^{\alpha}) \partial_{\alpha} u| (|\partial \partial_t u| + r^{-1} |\partial_t u|) dV_g & \lesssim \epsilon \|\partial_t u\|_{LE_S^1[t_0, t]}^2 + \epsilon \|u\|_{LE_S^1[t_0, t]}^2 \\ & \lesssim \epsilon E_1(t) + t^{C\epsilon} \|\partial \partial_{\leq 1} u(t_0)\|_{L^2}^2
\end{split}\end{equation}

 In the region $\frac{\tau}2\leq \rs$ we have, for the higher order term in $W$, since $|\partial_{\alpha} u|\lesssim \epsilon \la t\ra^{-1}$:
\begin{equation}\label{onetderiv4}
\int_{\M_{[t_0,t]}^{[\frac{\tau}2, \infty)}} |\partial_t \partial u| |\partial_{\alpha} u| (|\partial \partial_t u| + r^{-1} |\partial_t u|) dV_g \lesssim \int_{\M_{[t_0,t]}} \frac{\epsilon} {\la\tau\ra}  |\partial \partial_t u|^2dV_g + t^{C\epsilon} \|\partial \partial_{\leq 1} u(t_0)\|_{L^2}^2
\end{equation}

 For the lower order term we use \eqref{Wlotfar}. We obtain
\begin{equation}\label{onetderiv5}
\begin{split}
& \int_{\M_{[t_0,t]}^{[\frac{\tau}2, \infty)}} |\frac{\tau^{1+\delta}}{r^{1+\delta}\la t-\rs\ra^{1/2+\delta}}u_{\leq 1} \partial_{\alpha} u| (|\partial \partial_t u| + r^{-1} |\partial_t u|) dV_g \\& \lesssim \int_{\M_{[t_0,t]}^{[\frac{\tau}2, \infty)}} \epsilon \la\tau\ra^{-1} \la\tau-\rs\ra^{-1-\delta} |u_{\leq 1}| |\partial_{\leq 1}\partial_t u| dV_g  \lesssim  \int_{\M_{[t_0,t]}} \frac{\epsilon} {\la\tau\ra}  |\partial \partial_t u|^2dV_g + t^{C\epsilon} \|\partial \partial_{\leq 1} u(t_0)\|_{L^2}^2 \\ & + \int_{\M_{[t_0,t]}^{[\frac{\tau}2, \infty)}} \frac{\epsilon} {\la\tau\ra\la t-\rs\ra^{2+\delta}} u^2 dV_g \lesssim \int_{\M_{[t_0,t]}} \frac{\epsilon} {\la\tau\ra}  |\partial \partial_t u|^2dV_g + t^{C\epsilon} \|\partial \partial_{\leq 1} u(t_0)\|_{L^2}^2
\end{split}\end{equation}

 For the last inequality we used \eqref{LRHardy} to bound
\begin{equation}\label{Hardy2}
\int_{\rs\geq\frac{\tau}2} \frac{u^2}{\la t-\rs\ra^{2+\delta}} dx \lesssim \int_{\rs\geq\frac{\tau}4} |\partial u|^2 + r^{-1} u^2 dx
\end{equation}
 and Hardy's inequality.

 By \eqref{onetderiv1}, \eqref{onetderiv2}, \eqref{onetderiv3}, \eqref{onetderiv4} and \eqref{onetderiv5} we get
\begin{equation}\label{t-est}
 \int_{\M_{[t_0,t_1]}} |F_1| (|\partial \partial_t u| + r^{-1} |\partial_t u|) \lesssim \int_{\M_{[t_0,t]}} \frac{\epsilon} {\la\tau\ra}  |\partial \partial_t u|^2dV_g + t^{C\epsilon} \|\partial \partial_{\leq 1} u(t_0)\|_{L^2}^2 + \epsilon E_1(t)
\end{equation}

Gronwall's inequality combined with \eqref{partialtcommut} and \eqref{t-est} yields
\[
E_1(t) \lesssim t^{C\epsilon} \|\partial \partial_{\leq 1} u(t_0)\|_{L^2}^2
\]

which is the desired estimate for $\partial_t u$. By applying Lemma~\ref{elliptic}, the same holds true for $\partial u$.

 We now proceed by induction. We will show how to get the most difficult case when we commute with $N$ derivatives. We assume that the conclusion \eqref{higherLES} holds for up to $N-1$ derivatives, namely
\[
\|\partial \partial_{\leq N-1} u(t)\|_{L^2} + \|\partial_{\leq N-1} u\|_{LE_S^1[t_0, t]} \lesssim t^{C \epsilon} \|\partial \partial_{\leq N-1} u(t_0)\|_{L^2}, \qquad t_0 \leq t \leq t_1
\]
 and want to prove that
 \begin{equation}\label{Nderivs}
 \|\partial_{N+1} u(t)\|_{L^2} + \|\partial_{N} u\|_{LE_S^1[t_0, t]} \lesssim t^{C \epsilon} \|\partial \partial_{\leq N} u(t_0)\|_{L^2}, \qquad t_0 \leq t \leq t_1
 \end{equation}

  We start by commuting with $N$ time derivatives. We obtain
\begin{equation}
\Box_g \partial_t^N u = F_N\label{boxuN}
\end{equation}
 where
\begin{equation}
 F_N = \sum_{m+n=N} A_{mn} \partial_t^m h^{\alpha\beta} \partial_t^n \partial_{\alpha}\partial_{\beta} u + B_{mn} \partial_t^{m} W^{\alpha} \partial_t^n \partial_{\alpha} u\label{FN}
\end{equation}
 for some constants $A_{mn}$, $B_{mn}$ with $A_{0N} = B_{0N} = 0$.

  Let
\begin{equation}\label{enuN}
 E_N(t) = \|\partial \partial_t^N u(t)\|_{L^2}^2 + \|\partial_t^N u\|_{LE_S^1[t_0, t]}^2
\end{equation}

  By \eqref{LES} applied to $\partial_t^N u$ we know that
\begin{equation}\label{partialtcommutN}
E_N(t) \lesssim \int_{\M_{[t_0,t]}} |F_N| (|\partial \partial_t^N u| + r^{-1} |\partial_t^N u|)dV_g + \int_{\M_{[t_0,t]}^{ps}} \frac{\epsilon}{\la\tau\ra}|\partial \partial_t^N u|^2 dV_g + \|\partial \partial_t^n u(t_0)\|_{L^2}^2
\end{equation}

 We will now show that
 \begin{equation}\label{FN-est}
\int_{\M_{[t_0,t]}} |F_N| |\partial_{\leq 1} \partial_t^N u| dV_g \lesssim \int_{\M_{[t_0,t]}} \frac{\epsilon} {\la\tau\ra}  |\partial \partial_t^N u|^2dV_g + \epsilon E_N(t) + t^{C\epsilon}\|\partial \partial_{\leq N} u(t_0)\|_{L^2}^2
\end{equation}

 The conclusion \eqref{Nderivs} will now follow after applying Gronwall's inequality in \eqref{partialtcommutN} and using Lemma~\ref{elliptic}.

 Clearly either $m$ or $n$ is at most $\frac{N}2$. If $1\leq m\leq \frac{N}2$ we can apply \eqref{metricform}, \eqref{t-derivest} and Lemma~\ref{wavecoordscommut} to get
\begin{equation}\label{tderivglobal}
 |\partial_t^m h^{\alpha\beta}| + |\partial_t^{m} W^{\alpha}| \lesssim \epsilon \la t \ra^{-1}
\end{equation}
which suffices.

 On the other hand, if $1\leq n\leq \frac{N}2$ we also obtain from \eqref{metricform} and \eqref{t-derivest} that
\[
 |\partial_t^n \partial_{\alpha}\partial_{\beta} u| + |\partial_t^n \partial_j u| \lesssim \epsilon \la t \ra^{-1}, \qquad n\geq 1
\]

We are left with the case $n=0$.  Near the trapped set we have by \eqref{u-decay} and Lemma~\ref{elliptic}:
\[\begin{split}
&\int_{\M_{[t_0,t]}^{[r_e, R_1)}} |(\partial_t^N h^{\alpha\beta}) (\partial_{\alpha}\partial_{\beta} u) \partial_{\leq 1} \partial_t^N u| dV_g  \lesssim \int_{\M_{[t_0,t]}^{[r_e, R_1)}} \frac{\epsilon} {\la\tau\ra^{\frac12}} |(\partial_t^N h^{\alpha\beta})  \partial_{\leq 1} \partial_t^N u| dV_g \\ & \lesssim \int_{\M_{[t_0,t]}^{[r_e, R_1)}} \frac{\epsilon} {\la\tau\ra}  |\partial \partial_t^N u|^2 + \epsilon |\partial_{\leq N}u|^2 dV_g \lesssim \int_{\M_{[t_0,t]}^{[r_e, R_1)}} \frac{\epsilon} {\la\tau\ra}  |\partial \partial_t^N u|^2dV_g + \epsilon E_N(t) + \|\partial_{\leq N-1} u\|_{LE_S^1[t_0, t]}^2
\end{split}\]
which suffices by the induction hypothesis.

On the other hand for the second term we use \eqref{metricform}, \eqref{u-decay} and \eqref{W-est} as in \eqref{onetderiv2}. We obtain
\[\begin{split}
&\int_{\M_{[t_0,t]}^{[r_e, R_1)}} |(\partial_t^{N} W^{\alpha}) (\partial_{\alpha} u) \partial_{\leq 1} \partial_t^N u| dV_g  \lesssim \int_{\M_{[t_0,t]}^{[r_e, R_1)}} \frac{\epsilon} {\la\tau\ra^{\frac12}} |(\partial_t^{N} W^{\alpha}) \partial_{\leq 1} \partial_t^N u| dV_g \\ & \lesssim \int_{\M_{[t_0,t]}^{ps}} \frac{\epsilon} {\la\tau\ra} |\partial \partial_t^N u|^2 + |\partial_{\leq N}u|^2 dV_g + \int_{\M_{[t_0,t_1]}^{[r_e, R_1)}\setminus \M_{[t_0,t]}^{ps}} \epsilon |\partial_{\leq N+1} u|^2 dV_g \\ &
\lesssim \int_{\M_{[t_0,t]}} \frac{\epsilon} {\la\tau\ra}  |\partial \partial_t^N u|^2dV_g + \epsilon E_N(t) + \|\partial_{\leq N-1} u\|_{LE_S^1[t_0, t]}^2
\end{split}\]

 In the intermediate region $R_1^*\leq\rs\leq\frac{\tau}2$ we get as in \eqref{onetderiv1} and \eqref{onetderiv3}:
\[\begin{split}
\int_{\M_{[t_0,t]}^{[R_1, \frac{\tau}2})} (|\partial_t^N h^{\alpha\beta} \partial_{\alpha}\partial_{\beta}| + |\partial_t^{N} W^{\alpha} \partial_{\alpha} u|)|\partial_{\leq 1} \partial_t^N u| dV_g  \lesssim \int_{\M_{[t_0,t]}} \frac{\epsilon} {\la\tau\ra} & |\partial \partial_t^N u|^2dV_g + \epsilon E_N(t) + \\& t^{C\epsilon}\|\partial \partial_{\leq N} u(t_0)\|_{L^2}^2
\end{split}\]

In the region $\frac{\tau}2\leq \rs$ near the cone we have as in \eqref{onetderiv1}, \eqref{onetderiv4} and \eqref{onetderiv5}
\[
\int_{\M_{[t_0,t]}^{[\frac{\tau}2, \infty)}} (|\partial_t^N h^{\alpha\beta} \partial_{\alpha}\partial_{\beta}| + |\partial_t^{N} W^{\alpha} \partial_{\alpha} u|)|\partial_{\leq 1} \partial_t^N u| dV_g \lesssim \int_{\M_{[t_0,t]}} \frac{\epsilon} {\la\tau\ra}  |\partial \partial_t^N u|^2dV_g + t^{C\epsilon} \|\partial \partial_{\leq N-1} u(t_0)\|_{L^2}^2
\]

The conclusion \eqref{FN-est} now follows.

 We now use induction to add the rest of the vector fields in $Z$. Let us start by proving the desired estimate for $Zu$, where $Z\in\{\Omega, S\}$. A quick computation gives
\[
 \Box_g Zu = F_Z
\]
 where
 \begin{equation}\label{Z-error}
 F_Z = (Zh^{\alpha\beta})\partial_{\alpha}\partial_{\beta} u + h^{\alpha\beta} [Z, \partial_{\alpha}\partial_{\beta}]u + (ZW^{\alpha})\partial_{\alpha} u  + W^{\alpha} [Z, \partial_{\alpha}]u + [\Box_S, Z] u
 \end{equation}

 We will write
\[
F_Z = F_Z^1 + F_Z^2, \qquad F_Z^2 = \chi_{ps} [\Box_{g_S}, Z] u
\]

 Let
 \[
 E_Z (t) = \|\partial Zu(t)\|_{L^2}^2 + \|Zu\|_{LE_S^1[t_0, t]}^2
 \]

We would like to show that
\begin{equation}\label{Zvfd}
E_Z (t) \lesssim t^{C \epsilon} \|\partial u_{\leq 3}(t_0)\|_{L^2}^2
\end{equation}

 By applying Theorem~\ref{LE} to $Zu$ we obtain that
\begin{equation}\label{commutZ}
E_Z (t) \lesssim \int_{\M_{[t_0,t]}^{ps}} \frac{\epsilon}{\la\tau\ra}|\partial Zu|^2 dV_g + \|\partial u(t_0)\|_{L^2}^2 + \int_{\M_{[t_0,t_1]}} |F_1^Z| (|\partial Zu| + r^{-1} |Zu|) + |\partial_{\leq 1} F_2^Z|^2 dV_g
\end{equation}

Let us first estimate the last term. Since $[\Box_{g_S}, \Omega] = 0$, we only have to worry about the case $Z=S$. Since near the trapped set we have that $[\Box_{g_S}, Z] \approx \partial^2$ we get
\begin{equation}\label{vfcpt3}
\int_{\M_{[t_0,t]}} |\partial_{\leq 1} F_Z^2|^2 dV_g \lesssim \int_{\M_{[t_0,t_1]}^{ps}} |\partial \partial_{\leq 2} u|^2 dV_g \lesssim \|\partial_{\leq 3} u\|_{LE_S^1[t_0, t]}^2 \lesssim t^{C \epsilon} \|\partial \partial_{\leq 3} u(t_0)\|_{L^2}^2
\end{equation}
by the induction hypothesis.

 We will now prove that
\begin{equation}\label{Zinhom}\begin{split}
& \int_{\M_{[t_0,t]}} |F_Z^1| (|\partial u| + r^{-1} |u|) dV_g \leq \delta_0 E_Z(t) +  C(\delta_0) \Bigl(\int_{\M_{[t_0,t]}} \frac{\epsilon}{\la\tau\ra}|\partial Zu|^2 dV_g + \delta_{ZS} E_{\Omega}(t)\Bigr) \\ &  + C(\delta_0) t^{C\epsilon} \Bigl(\|\partial_{\leq 3} u \|_{LE_S^1[t_0, t]}^2 + \|\partial \partial_{\leq 3} u\|_{L^{\infty}[t_0, t]L^2}^2\Bigr)
\end{split}\end{equation}
Here $\delta_0$ is small, $\epsilon$-independent, and $\delta_{ZS} =0$ when $Z=\Omega$, $\delta_{ZS}=1$ when $Z=S$. The first term on the RHS can be absorbed to the LHS of \eqref{commutZ}, while in view of \eqref{Nderivs} we can bound the last term:
\[
\|\partial_{\leq 3} u \|_{LE_S^1[t_0, t]}^2 + \|\partial \partial_{\leq 3} u\|_{L^{\infty}[t_0, t]L^2}^2 \lesssim t^{C\epsilon}\|\partial \partial_{\leq 3} u(t_0)\|_{L^2}^2
\]

 When $Z=\Omega$, \eqref{Zvfd} follows from the inequality above and Gronwall's inequality. When $Z=S$ the conclusion will similarly follow since we have the needed estimate for $E_{\Omega}$.

  We treat each term in \eqref{Z-error} separately.

  For the first term we will use the pointwise bounds \eqref{u-decay} and \eqref{improvetwoderivs} to bound
 \[
 |\partial_{\alpha}\partial_{\beta} u| \lesssim \epsilon \frac{t}{r^2\la t-\rs\ra^{\frac32}}
 \]
  and the fact that
 \[
 |Zh| \lesssim |Zu| + |u|
 \]

  We divide our region into three parts as before.

  In the region $r_e\leq r\leq R_1$ we estimate:
\begin{equation}\label{vfcpt}
\begin{split}
\int_{\M_{[t_0,t_1]}^{[r_e, R_1)}} & |(Zh^{\alpha\beta})(\partial_{\alpha}\partial_{\beta} u)| (|\partial Zu| + r^{-1} |Zu|) dV_g  \lesssim \int_{\M_{[t_0,t_1]}^{[r_e, R_1)}} \frac{\epsilon}{\la\tau\ra^{\frac12}} |Zh| |\partial_{\leq 1}Zu| dV_g \\ &  \lesssim \int_{\M_{[t_0,t_1]}^{[r_e, R_1)}} \frac{\epsilon}{\la\tau\ra}|\partial Zu|^2 dV_g + \epsilon E_Z(t) + \epsilon \|\partial_{\leq 3}u\|_{LE_S^1[t_0, t]}^2
\end{split} \end{equation}

 In the intermediate region $R_1^* \leq \rs\leq \frac{\tau}2$ we get:
 \begin{equation}\label{vfinter}
\begin{split}
\int_{\M_{[t_0,t]}^{[R_1, \frac{\tau}2)}} & |(Zh^{\alpha\beta})(\partial_{\alpha}\partial_{\beta} u)| (|\partial Zu| + r^{-1} |Zu|) dV_g  \lesssim \int_{\M_{[t_0,t]}^{[R_1, \frac{\tau}2)}} \frac{\epsilon}{r^{\frac52}} |Zh| (|\partial Zu| + r^{-1} |Zu|) dV_g \\ &  \lesssim  \epsilon E_Z(t) + \epsilon \|\partial_{\leq 3}u\|_{LE_S^1[t_0, t]}^2
\end{split} \end{equation}

 Near the cone, where $\frac{\tau}2 \leq \rs$, we have
\begin{equation}\label{vfcone} \begin{split}
& \int_{\M_{[t_0,t]}^{[\frac{\tau}2, \infty)}}  |(Zh^{\alpha\beta})(\partial_{\alpha}\partial_{\beta} u)| (|\partial Zu| + r^{-1} |Zu|) dV_g  \lesssim \int_{\M_{[t_0,t]}^{[\frac{\tau}2, \infty)}} \frac{\epsilon}{\la\tau\ra\la t-\rs\ra^{\frac32}} |Zh| (|\partial Zu| + \\& r^{-1} |Zu|) dV_g \lesssim \int_{\M_{[t_0,t]}^{[\frac{\tau}2, \infty)}} \frac{\epsilon}{\la\tau\ra} |\partial Zu|^2 + \frac{\epsilon}{\la\tau\ra^2}(|Zu|^2 + |u|^2) + \frac{\epsilon}{\la\tau\ra\la \tau-\rs\ra^3}(|Zu|^2 + |u|^2) dV_g  \\ & \lesssim \int_{\M_{[t_0,t]}} \frac{\epsilon}{\la\tau\ra}|\partial Zu|^2 dV + t^{C\epsilon}\|\partial \partial_{\leq 3} u\|_{L^{\infty}[t_0, t]L^2}^2
\end{split}\end{equation}
where we used Hardy's inequality and \eqref{Hardy2} for the last estimate.

The desired conclusion thus follows for the first term in \eqref{Z-error}. The same argument also works for the second term in \eqref{Z-error} since $[Z, \partial_{\alpha}\partial_{\beta}] \approx \partial^2$ and $Zh\approx h$.

 For the third term, the crucial observation is that, while in general we have (see Lemma~\ref{wavecoordscommut})
\[
|ZW| \lesssim |\partial Zu| + |\partial \partial_{\leq 2} u| + |Zu| + |u|, \quad r\leq R_1
\]
 near $r=3M$ we have due to \eqref{W-est}
\[
|ZW| \lesssim \la t\ra^{-1/2} |\partial Zu| + |Zu| + |\partial_{\leq 3} u|
\]

 We thus get :
\begin{equation}\label{vfcpt2}
\begin{split}
 & \int_{\M_{[t_0,t_1]}^{[r_e, R_1)}}  |(ZW^{\alpha})(\partial_{\alpha} u)| (|\partial Zu| + r^{-1} |Zu|) dV_g \lesssim \int_{\M_{[t_0,t_1]}^{ps}} \frac{\epsilon}{\la\tau\ra} |\partial Zu|^2 + \epsilon(|Zu|^2 + |\partial_{\leq 3} u|^2) dV_g \\ & + \int_{\M_{[t_0,t_1]}^{[r_e, R_1)}\setminus \M_{[t_0,t]}^{ps}} \epsilon (|\partial_{\leq 1} Zu|^2 + |\partial_{\leq 3} u|^2) dV_g \lesssim \int_{\M_{[t_0,t]}} \frac{\epsilon}{\la\tau\ra}|\partial Zu|^2 dV_g + \epsilon E_Z(t) + \epsilon \|\partial_{\leq 3}u\|_{LE_S^1[t_0, t]}^2
\end{split}
\end{equation}

 In the region $\rs\geq R_1^*$ we have by Lemma~\ref{wavecoordscommut}
\[
|ZW| \lesssim |\partial Zu| + |\partial \partial_{\leq 2} u| + (\frac1{r^2}+ \frac{t^{1+\delta}}{r^{1+\delta}\la t-\rs\ra^{1/2+\delta}}) (|Zu| + |u|)
\]

In the intermediate region $R_1^* \leq \rs\leq \frac{\tau}2$ we now obtain
 \begin{equation}\label{vfinter2}\begin{split}
& \int_{\M_{[t_0,t]}^{[R_1, \frac{\tau}2)}} |(ZW^{\alpha})(\partial_{\alpha} u)| (|\partial Zu| + r^{-1} |Zu|) dV_g  \\ & \lesssim \int_{\M_{[t_0,t]}^{[R_1, \frac{\tau}2)}} \frac{\epsilon} {r\la t-\rs\ra^{\frac12}} |ZW| (|\partial Zu| + r^{-1} |Zu|) dV_g  \lesssim  \epsilon E_Z(t) + \epsilon\|\partial_{\leq 3} u\|_{LE_S^1[t_0, t]}^2
 \end{split}\end{equation}

Near the cone we have, similarly to \eqref{vfcone}
\begin{equation}\label{vfcone2}\begin{split}
\int_{\M_{[t_0,t]}^{[\frac{\tau}2, \infty)}}  |(ZW^{\alpha})(\partial_{\alpha} u)| & (|\partial Zu| + r^{-1} |Zu|) dV_g  \lesssim \int_{\M_{[t_0,t]}^{[\frac{\tau}2, \infty)}}  \frac{\epsilon} {\la\tau\ra} |ZW| (|\partial Zu| + r^{-1} |Zu|) dV_g  \\ & \lesssim \int_{\M_{[t_0,t]}} \frac{\epsilon}{\la\tau\ra}|\partial Zu|^2 dV_g + t^{C\epsilon}\|\partial \partial_{\leq 2} u\|_{L^{\infty}[t_0, t]L^2}^2
\end{split}\end{equation}

The fourth term is \eqref{Z-error} is treated similarly, since $ZW\approx W$ and $[Z, \partial_{\beta}]\approx \partial$.

Finally we have to deal with the fifth term, which comes from the linear part. In the compact region $r_e \leq r \leq R_1$ away from the photonsphere we have $[\Box_{g_S}, Z] \approx \partial^2$ and so we trivially estimate
\begin{equation}\label{vfcpt3}\begin{split}
\int_{\M_{[t_0,t_1]}^{[r_e, R_1)}} & (1-\chi_{ps})([\Box_{g_S}, Z] u) (|\partial Zu| + r^{-1} |Zu|) dV_g  \leq \int_{\M_{[t_0,t_1]}^{[r_e, R_1)}} (1-\chi_{ps}) \Bigl(\delta_0 (|\partial Zu|^2 + |Zu|^2)\\ & + C(\delta_0)|\partial_{\leq 2} u|^2\Bigr) dV_g  \leq \delta_0 E_Z(t) + C(\delta_0) \|\partial_{\leq 3} u\|_{LE_S^1[t_0, t]}^2
\end{split}\end{equation}

In the region $r\geq R_1$ we use the fact that the commutator has good decay properties. More precisely, $[\Box_{g_S}, \Omega] =0$, while for $S$ we have (see, for example, \cite{Luk1}, Proposition 4):
\begin{equation}\label{S-far}
[\Box_{g_S}, S] u \in S^Z(1) \Box_{g_S} u + S^Z(r^{-2+})(\partial u, \partial\Omega u)
\end{equation}

 Since
\[
 \Box_g u - \Box_{g_S} u = h^{\alpha\beta} \partial_{\alpha}\partial_{\beta} u + W^{\alpha}\partial_{\alpha} u
\]
the right hand side can be handled as in \eqref{vfinter}, \eqref{vfcone}, \eqref{vfinter2}, and \eqref{vfcone2}. On the other hand, we also have
\begin{equation}\label{vfcone3}
\begin{split}
&\int_{\M_{[t_0,t]}^{[R_1, \infty)}} r^{-2+} (|\partial u| + |\partial\Omega u|) (|\partial Su| + r^{-1} |Su|) dV_g  \leq \delta_0 E_S(t) +  \\ &C(\delta_0) (\|\partial u\|_{LE[t_0, t]}^2 + \|\partial \Omega u\|_{LE[t_0, t]}^2)\leq \delta_0 E_S(t) + C(\delta_0) (\|\partial_{\leq 3} u\|_{LE_S^1[t_0, t]}^2 + E_{\Omega}(t))
\end{split}\end{equation}

 This completes the proof of \eqref{Zinhom}.

 We now proceed by induction. Fix a positive integer $K\leq \frac{N}3$, and define
\[
\mathcal{P} = \{\Lambda=(i, j, k), j+k=K\}
\]
\[
\mathcal{P}^l = \{\Lambda=(i, j, k)\in \mathcal{P}, k\leq l\}, \qquad 0\leq l\leq K
\]

 We will prove the theorem by induction on $K$. We have explicitly obtained the estimates in the cases $K=0$ and $K=1$, $i=0$, which are the first two steps in the induction argument.

 For $\Lambda = (i, j, k)$ we say that $\Lambda\prec \mathcal{P}$ if $j+k<K$.

We assume that the theorem holds for all $\Lambda\prec\mathcal{P}$ and we will prove it for $\Lambda\in\mathcal{P}$.
 Let us define
\begin{equation}\label{ZKen}
E_{\mathcal{P}^l}(t) = {\sum}_{\Lambda\in\mathcal{P}^l} \|\partial u_{\Lambda}(t)\|_{L^2}^2 + \|u_{\Lambda}\|_{LE_S^1[t_0, t]}^2, \quad 0\leq l\leq K
\end{equation}
\begin{equation}\label{Zen}
E_{\mathcal{P}}(t) ={ \sum}_{\Lambda\in\mathcal{P}} \|\partial u_{\Lambda}(t)\|_{L^2}^2 + \|u_{\Lambda}\|_{LE_S^1[t_0, t]}^2
\end{equation}
and
\[
E_{\prec\mathcal{P}}(t) ={ \sum}_{\Lambda_1\prec\Lambda\in\mathcal{P}} \|\partial u_{\Lambda_1}(t)\|_{L^2}^2 + \|u_{\Lambda_1}\|_{LE_S^1[t_0, t]}^2
\]

 Pick some $\Lambda\in\mathcal{P}^l$. We will show that
\begin{equation}\label{Zenest}
E_{\mathcal{P}^l}(t) \lesssim \int_{\M_{[t_0,t]}} \epsilon \la \tau\ra^{-1} |\partial u_{\Lambda}|^2 dV_g +  E_{\mathcal{P}^{l-1}}(t) + t^{C\epsilon}E_{\prec\mathcal{P}}(t)
\end{equation}
where we define $E_{\mathcal{P}^{-1}}= 0$. \eqref{Zenest} easily implies that
\[
E_{\mathcal{P}}(t) \lesssim \int_{\M_{[t_0,t]}} \epsilon \la \tau\ra^{-1} E_{\mathcal{P}}(\tau) dV_g + t^{C\epsilon}E_{\prec\mathcal{P}}(t)
\]

  Gronwall's inequality and the induction hypothesis now imply the desired result.

  We start by noticing that, due to the elliptic estimate Lemma~\ref{elliptic}, we may assume that all derivatives in $\Lambda$ are time derivatives.

 After commuting the equation we get that
\begin{equation}
\Box_g u_{\Lambda} = F_{\Lambda} + L_{\Lambda}\label{boxuLambda}
\end{equation}
where
\begin{equation}\label{FLambda}
F_{\Lambda} = \sum_{\substack{\Lambda_1+\Lambda_2 = \Lambda \\|\Lambda_1|>0}} h_{\Lambda_1}^{\alpha\beta} \partial_{\alpha}\partial_{\beta}u_{\Lambda_2} + \sum_{\Lambda_1+\Lambda_2 = \Lambda} W_{\Lambda_1}^{\alpha} \partial_{\alpha}u_{\Lambda_2} + [\Box_{g_S}, Z^{\Lambda}] u
\end{equation}
and $L_{\Lambda}$ consists of lower order terms satisfying the bound
\begin{equation}
|L_{\Lambda}| \lesssim \sum_{m+n < |\Lambda|} (|h_{\leq m}| |\partial^{2} u_{\leq n}| + |W_{\leq m}^{\alpha}| |\partial_{\alpha}u_{\leq n}|)
\end{equation}

 We would like to prove that
\begin{equation}\label{FLambdainhom}
\begin{split}
\inf_{F_{\Lambda}=F_{\Lambda}^1+F_{\Lambda}^2} &\int_{\M_{[t_0,t]}} |F_{\Lambda}^1| (|\partial u_{\Lambda}| + r^{-1} |u_{\Lambda}|) + |\partial_{\leq 1} F_{\Lambda}^2|^2 dV_g \leq \delta_0 E_{\mathcal{P}^{l}}(t) \\ & + C(\delta_0) \Bigl(\int_{\M_{[t_0,t]}} \epsilon \la \tau\ra^{-1} |\partial u_{\Lambda}|^2 dV_g +  E_{\mathcal{P}^{l-1}}(t)\Bigr) + C(\delta_0) t^{C\epsilon}E_{\prec\mathcal{P}}(t)
\end{split}\end{equation}
and a similar result for $L_{\Lambda}$.

By applying Theorem~\ref{LE}, summing after $\Lambda\in\mathcal{P}^l$, and absorbing the first term on the right hand side of \eqref{FLambdainhom} to the left hand side, we obtain the desired conclusion \eqref{Zenest}.

 We will prove \eqref{FLambdainhom}. The corresponding estimate for $L_{\Lambda}$ is similar but easier since it is lower order.

 We write as before
\[
F_{\Lambda} = F_{\Lambda}^1 + F_{\Lambda}^2, \qquad F_{\Lambda}^2 = \chi_{ps} [\Box_{g_S}, Z^{\Lambda}] u
\]

 For the second term, it is immediate to see that
\[
\int_{\M_{[t_0,t]}} |\partial_{\leq 1} F_{\Lambda}^2|^2 dV_g \lesssim E_{\prec\mathcal{P}}(t)
\]

 We now estimate $F_{\Lambda}^1$. Clearly we must have either $|\Lambda_1|\leq \frac{N}2$ or $|\Lambda_2|\leq \frac{N}2$.

 In the second case, the estimates follow similarly to \eqref{vfcpt}, \eqref{vfinter}, \eqref{vfcone}, \eqref{vfcpt2}, \eqref{vfinter2} and \eqref{vfcone2}. Indeed, when $r_e\leq r\leq R_1$ we have
\begin{equation}\label{vfcpt1hgh}
\begin{split}
\int_{\M_{[t_0,t_1]}^{[r_e, R_1)}} & |h_{\Lambda_1}^{\alpha\beta} \partial_{\alpha}\partial_{\beta}u_{\Lambda_2}| (|\partial u_{\Lambda}| + r^{-1} |u_{\Lambda}|) dV_g \lesssim \int_{\M_{[t_0,t_1]}^{[r_e, R_1)}} \frac{\epsilon}{\la\tau\ra^{\frac12}} |h_{\Lambda_1}| |\partial_{\leq 1}u_{\Lambda}| dV_g  \\ & \lesssim \int_{\M_{[t_0,t]}} \frac{\epsilon}{\la\tau\ra}|\partial u_{\Lambda}|^2 dV_g + \epsilon E_{\mathcal{P}^l}(t) + E_{\prec\mathcal{P}}(t)
\end{split}\end{equation}
\begin{equation}\label{vfcpt2hgh}\begin{split}
& \int_{\M_{[t_0,t_1]}^{[r_e, R_1)}} |W_{\Lambda_1}^{\alpha} \partial_{\alpha}u_{\Lambda_2}| (|\partial u_{\Lambda}| + r^{-1} |u_{\Lambda}|) dV_g \lesssim \int_{\M_{[t_0,t_1]}^{[r_e, R_1)}} \frac{\epsilon}{\la\tau\ra^{\frac12}} |W_{\Lambda_1}| |\partial_{\leq 1}u_{\Lambda}| dV_g \\ & \lesssim \int_{\M_{[t_0,t_1]}^{ps}}  \frac{\epsilon}{\la \tau\ra} |\partial u_{\Lambda}|^2 + \epsilon |u_{\leq\Lambda}|^2 dV_g + \int_{\M_{[t_0,t_1]}^{[r_e, R_1)}\setminus \M_{[t_0,t]}^{ps}} \epsilon |\partial_{\leq 1} u_{\leq \Lambda}|^2 dV_g \\& \lesssim \int_{\M_{[t_0,t]}} \frac{\epsilon}{\la\tau\ra}|\partial u_{\Lambda}|^2 dV_g + \epsilon E_{\mathcal{P}^l}(t) + E_{\prec\mathcal{P}}(t)
\end{split} \end{equation}

In the intermediate region $R_1^* \leq \rs\leq \frac{\tau}2$ we obtain
\begin{equation}\label{vfinter1hgh}
\begin{split}
\int_{\M_{[t_0,t]}^{[R_1, \frac{\tau}2)}} & |h_{\Lambda_1}^{\alpha\beta} \partial_{\alpha}\partial_{\beta}u_{\Lambda_2}| (|\partial u_{\Lambda}| + r^{-1} |u_{\Lambda}|) dV_g \lesssim \int_{\M_{[t_0,t]}^{[R_1, \frac{\tau}2)}}\frac{\epsilon} {r^{\frac52}} |h_{\Lambda_1}| (|\partial u_{\Lambda}| + r^{-1} |u_{\Lambda}|) dV_g \\ & \lesssim  \epsilon E_{\mathcal{P}^l}(t) + E_{\prec\mathcal{P}}(t)
\end{split}\end{equation}
 \begin{equation}\label{vfinter2hgh}
\begin{split}
& \int_{\M_{[t_0,t]}^{[R_1, \frac{\tau}2)}}  |W_{\Lambda_1}^{\alpha} \partial_{\alpha}u_{\Lambda_2}| (|\partial u_{\Lambda}| + r^{-1} |u_{\Lambda}|) dV_g
 \lesssim \int_{\M_{[t_0,t]}^{[R_1, \frac{\tau}2)}} \frac{\epsilon}{r^{\frac32}} |W_{\Lambda_1}| (|\partial u_{\Lambda}| + r^{-1} |u_{\Lambda}|) dV_g
 \\ & \lesssim  \epsilon E_{\mathcal{P}^l}(t) + \epsilon E_{\prec\mathcal{P}}(t)
\end{split}\end{equation}

 Near the cone, we get by Hardy's inequality and \eqref{Hardy2}:
\begin{equation}\label{vfconehgh} \begin{split}
& \int_{\M_{[t_0,t]}^{[\frac{\tau}2, \infty)}} |h_{\Lambda_1}^{\alpha\beta} \partial_{\alpha}\partial_{\beta}u_{\Lambda_2}| (|\partial u_{\Lambda}| + r^{-1} |u_{\Lambda}|) dV_g \lesssim \int_{\M_{[t_0,t]}^{[\frac{\tau}2, \infty)}}\frac{\epsilon}{\la\tau\ra\la \tau-\rs\ra^{\frac32}} |u_{\leq \Lambda_1}| (|\partial u_{\Lambda}| \\ & + r^{-1} |u_{\Lambda}|) dV_g  \lesssim \int_{\M_{[t_0,t]}^{[\frac{\tau}2, \infty)}} \frac{\epsilon}{\tau} |\partial u_{\leq\Lambda}|^2 + \frac{\epsilon}{\tau^2} |u_{\leq\Lambda}|^2 dV_g \lesssim \int_{\M_{[t_0,t]}} \frac{\epsilon}{\la\tau\ra}|\partial u_{\Lambda}|^2 dV_g + t^{C\epsilon} E_{\prec\mathcal{P}}(t)
\end{split}\end{equation}
and by Lemma~\ref{wavecoordscommut}
\begin{equation}\label{vfcone2hgh}\begin{split}
& \int_{\M_{[t_0,t]}^{[\frac{\tau}2, \infty)}}  |W_{\Lambda_1}^{\alpha} \partial_{\alpha}u_{\Lambda_2}| (|\partial u_{\Lambda}| + r^{-1} |u_{\Lambda}|) dV_g \lesssim \int_{\M_{[t_0,t]}^{[\frac{\tau}2, \infty)}} \frac{\epsilon} {\la\tau\ra} |W_{\Lambda_1}| (|\partial u_{\Lambda}| + r^{-1} |u_{\Lambda}|) dV_g   \lesssim \\ & \int_{\M_{[t_0,t]}} \frac{\epsilon}{\la\tau\ra}|\partial u_{\Lambda}|^2 dV_g + t^{C\epsilon}\|\partial u_{\prec \Lambda}\|_{L^{\infty}[t_0, t]L^2}^2  \lesssim \int_{\M_{[t_0,t]}} \frac{\epsilon}{\la\tau\ra}|\partial u_{\Lambda}|^2 dV_g + \epsilon E_{\mathcal{P}^l}(t) + t^{C\epsilon} E_{\prec\mathcal{P}}(t)
\end{split}\end{equation}

   Let us treat the first case, when $|\Lambda_1|\leq \frac{N}2$.  If $i_1 > 0$ we get by \eqref{tderivglobal}, since we are assuming that the derivatives are time derivatives:
\[
|h_{\Lambda_1}| + |W^{\alpha}_{\Lambda_1}|\lesssim \epsilon \la t\ra^{-1}
\]
and thus
\begin{equation}\label{vfderhgh}\begin{split}
\int_{\M_{[t_0,t]}}  (|h_{\Lambda_1}^{\alpha\beta} \partial_{\alpha}\partial_{\beta}u_{\Lambda_2}| + |W_{\Lambda_1}^{\alpha} \partial_{\alpha} u_{\Lambda_2}) (|\partial u_{\Lambda}| + r^{-1} |u_{\Lambda}|) dV_g \lesssim \int_{\M_{[t_0,t]}} \frac{\epsilon}{\la\tau\ra}|\partial u_{\Lambda}|^2 dV_g + t^{C\epsilon} E_{\prec\mathcal{P}}(t)
\end{split} \end{equation}

 On the other hand, if $i_1=0$ then a-priori we only know that $|h_{\Lambda_1}|\lesssim \epsilon\la t\ra^{-1}\la t-\rs\ra^{\frac12}$. The crucial observation for the first term is that, since $|\Lambda_1|>0$, we have that $(2, 0, 0) + \Lambda_2\prec\Lambda$, and thus $\partial_{\alpha}\partial_{\beta} u_{\Lambda_2}$ is of lower order. Near the trapped set we thus get
\begin{equation}\label{vfcpt4hgh}
\int_{\M_{[t_0,t_1]}^{[r_e, R_1)}}  |h_{\Lambda_1}^{\alpha\beta} \partial_{\alpha}\partial_{\beta}u_{\Lambda_2}| (|\partial u_{\Lambda}| + r^{-1} |u_{\Lambda}|) dV_g  \lesssim \int_{\M_{[t_0,t_1]}^{[r_e, R_1)}} \epsilon |u_{\prec\Lambda}|^2 dV_g  \lesssim \epsilon E_{\prec\mathcal{P}}(t)
\end{equation}

 In the intermediate region and near the cone we use Klainerman-Sideris:
\begin{equation}\label{vfinter3hgh}\begin{split}
& \int_{\M_{[t_0,t]}^{[R_1, \infty)}}  |h_{\Lambda_1}^{\alpha\beta} \partial_{\alpha}\partial_{\beta}u_{\Lambda_2}| (|\partial u_{\Lambda}| + r^{-1} |u_{\Lambda}|) dV_g  \lesssim \int_{\M_{[t_0,t]}^{[R_1, \infty)}}  \frac{\epsilon\la \tau-\rs\ra^{\frac12}}{\la \tau\ra} |\partial_{\alpha}\partial_{\beta} u_{\Lambda_2}|  (|\partial u_{\Lambda}| \\ & + r^{-1} |u_{\Lambda}|) dV_g  \lesssim \int_{\M_{[t_0,t]}^{[R_1, \infty)}} \sum_{Z\in\{\Omega, S\}} \Bigl(\frac{\epsilon} {\la \tau\ra^\la \tau-r\ra^{\frac12}} |\partial Zu_{\Lambda_2}| + \epsilon \la \tau-r\ra^{-\frac12} |\Box u_{\Lambda_2}|\Bigr) (|\partial u_{\Lambda}| \\ & + r^{-1} |u_{\Lambda}|) dV_g  \lesssim \int_{\M_{[t_0,t]}}  \frac{\epsilon} {\la \tau\ra} |\partial u_{\Lambda}|^2 dV_g + t^{C\epsilon} E_{\prec\mathcal{P}}(t) + \int_{\M_{[t_0,t]}^{[R_1, \infty)}} \epsilon \la \tau-r\ra^{-\frac12} |(\Box_g u - \Box u)_{\Lambda_2}| \\ & (|\partial u_{\Lambda}| + r^{-1} |u_{\Lambda}|) dV_g  \lesssim \int_{\M_{[t_0,t]}} \frac{\epsilon} {\la \tau\ra} |\partial u_{\Lambda}|^2 dV_g + t^{C\epsilon}E_{\prec\mathcal{P}}(t)
\end{split} \end{equation}
where we can use the induction hypothesis to estimate the last term.

The estimate for the $W^{\alpha}$ term is similar, but easier. Near the trapped set we have
\begin{equation}\label{vfcpt5hgh}\begin{split}
\int_{\M_{[t_0,t_1]}^{[r_e, R_1)}}  |W_{\Lambda_1}^{\alpha} \partial_{\alpha}u_{\Lambda_2}| (|\partial u_{\Lambda}| + r^{-1} |u_{\Lambda}|) dV_g & \lesssim \int_{\M_{[t_0,t_1]}^{[r_e, R_1)}} \frac{\epsilon} {\la \tau\ra} |\partial u_{\Lambda}|^2 + \epsilon |u_{\prec\Lambda}|^2 dV_g \\ & \lesssim \int_{\M_{[t_0,t]}} \frac{\epsilon} {\la \tau\ra} |\partial u_{\Lambda}|^2 dV_g + \epsilon E_{\prec\mathcal{P}}(t)
\end{split}\end{equation}
In the intermediate region we get from Lemma~\ref{wavecoords2} and \eqref{u-decay} that $|W_{\Lambda_1}| \lesssim \epsilon r^{-1-\delta}$ and thus
\begin{equation}\label{vfinter3hgh}\begin{split}
\int_{\M_{[t_0,t]}^{[R_1, \frac{\tau}2)}}  |W_{\Lambda_1}^{\alpha} \partial_{\alpha}u_{\Lambda_2}| (|\partial u_{\Lambda}| + r^{-1} |u_{\Lambda}|) dV_g &  \lesssim \int_{\M_{[t_0,t]}^{[R_1, \frac{\tau}2)}} \epsilon r^{-1-\delta} |\partial_{\alpha} u_{\Lambda_2}| (|\partial u_{\Lambda}| + r^{-1} |u_{\Lambda}|) dV_g \\ & \lesssim \epsilon E_{\mathcal{P}^l}(t) + E_{\prec\mathcal{P}}(t)
\end{split}\end{equation}
Near the cone Lemma~\ref{wavecoords2} and \eqref{u-decay} imply that $|W_{\Lambda_1}| \lesssim \frac{\epsilon}{\la t\ra \la t-\rs\ra^{\delta}}$ and thus
\begin{equation}\label{vfcone3hgh}\begin{split}
\int_{\M_{[t_0,t]}^{[\frac{\tau}2, \infty)}} |W_{\Lambda_1}^{\alpha} \partial_{\alpha}u_{\Lambda_2}| (|\partial u_{\Lambda}| + r^{-1} |u_{\Lambda}|) dV_g  & \lesssim \int_{\M_{[t_0,t]}^{[\frac{\tau}2, \infty)}} \frac{\epsilon}{\la\tau\ra} |\partial_{\alpha} u_{\Lambda_2}| (|\partial u_{\Lambda}| + r^{-1} |u_{\Lambda}|) dV_g \\ & \lesssim \int_{\M_{[t_0,t]}} \frac{\epsilon} {\la \tau\ra} |\partial u_{\Lambda}|^2 dV_g + t^{C\epsilon} E_{\prec\mathcal{P}}(t)
\end{split}\end{equation}

 Finally, for the last term in \eqref{FLambdainhom}, we see, due to \eqref{S-far}, that
\begin{equation}\label{S-farhgh}
[\Box_{g_S}, Z^{\Lambda}] u \in S^Z(r^{-2+}) (\partial u_{\tilde\Lambda} + |\partial u_{\prec\Lambda}|) + F_{\prec\Lambda}, \quad  \tilde\Lambda\in \mathcal{P}^{l-1}
\end{equation}
  The last term can be estimated by our induction hypothesis. The highest order term can be treated as in \eqref{vfcpt3} and \eqref{vfcone3}:
\begin{equation}\label{lincomm}
 \int_{\M_{[t_0,t]}}  (1-\chi_{ps}) r^{-2+}|\partial u_{\tilde\Lambda}| (|\partial u_{\Lambda}| + r^{-1} |u_{\Lambda}|) dV_g \leq \delta_0 E_{\mathcal{P}^{l}}(t) + C(\delta_0) (E_{\mathcal{P}^{l-1}}(t) + E_{\prec\mathcal{P}}(t))
 \end{equation}
while for the lower order term we immediately get
\begin{equation}\label{lincommlot}
 \int_{\M_{[t_0,t]}}  (1-\chi_{ps}) r^{-2+}|\partial u_{\prec\Lambda}| (|\partial u_{\Lambda}| + r^{-1} |u_{\Lambda}|) dV_g \leq \delta_0 E_{\mathcal{P}^{l}}(t) + C(\delta_0)E_{\prec\mathcal{P}}(t)
\end{equation}
The estimate \eqref{FLambdainhom} now follows from \eqref{vfcpt1hgh} - \eqref{lincommlot}.
\end{proof}

%\newpage
\newsection{Pointwise decay}
In this section we will establish the required pointwise decay for $u$ and vector fields applied to $u$. More precisely, we will prove the following
\begin{theorem}\label{ptwsedcy} Let $T$ be a fixed time and $u$ solve \eqref{maineeqn} in the time interval $T\leq t\leq 2T$. Assume that $g(u,t,x)$ satisfies the conditions from Section 3.
Then for any multi index $|\Lambda|\leq N-13$ we have for $T\leq t\leq 2T$:
\begin{equation}\label{ptdecayu}
|u_{\Lambda}| \leq C'_{|\Lambda|} \la t\ra^{-1} \la t-\rs\ra^{1/2} \|u_{\leq |\Lambda|+13}\|_{LE_S^1[T, 2T]}
\end{equation}
\begin{equation}\label{ptdecaydu}
|\partial u_{\Lambda}| \leq C'_{|\Lambda|} \la r\ra^{-1} \la t-\rs\ra^{-1/2} \|u_{\leq |\Lambda|+13}\|_{LE_S^1[T, 2T]}
\end{equation}
\end{theorem}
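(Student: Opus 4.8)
The plan is to derive the pointwise bounds from fixed-time weighted $L^2$ information by Sobolev embedding, and to produce that $L^2$ information from the local energy norm $\|u_{\leq|\Lambda|+13}\|_{LE_S^1[T,2T]}$ together with the energy flux it controls, the Klainerman--Sideris estimate of Lemma~\ref{KS1}, and the elliptic estimate of Lemma~\ref{elliptic}. Throughout I would work on the fixed dyadic block $\tau\in[T,2T]$, so $\langle\tau\rangle\approx\langle t\rangle\approx T$, and use finite speed of propagation: since the data are compactly supported, $u_\Lambda(\tau,\cdot)$ vanishes for $\rs\geq\tau+C_0$, so all radial integrals below are over finite ranges. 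Commuting \eqref{maineeqn} with $Z^\Lambda$ gives $\Box_g u_\Lambda=F_\Lambda$ with $F_\Lambda$ as in \eqref{FLambda}; by \eqref{metricdecay}, \eqref{S-far} and Lemma~\ref{KS1} this is a lower order term with strictly better decay, so it suffices to treat a generic $v=u_\Lambda$ solving $\Box_g v=(\text{small, lower order})$. The budget of $13$ extra derivatives ($|\Lambda|+13\leq N$) is spent on: two rotations for Sobolev embedding on $\S^2$, three vector fields per application of Lemma~\ref{KS1}, one for Lemma~\ref{elliptic}, and a few more for commutators in the propagation step below. Crucially, by Lemma~\ref{elliptic} I may assume at the outset that \emph{all} derivatives in $\Lambda$ are time derivatives; this is what makes the photonsphere manageable, since there the $LE_S^1$ norm is degenerate in $\partial_t$ and $\ang$ but not in $\partial_r$, and the spatial regularity needed for Sobolev embedding has to be recovered from the equation.

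For the bounded region $r\leq 2R_1$ and the intermediate region $R_1\leq\rs\leq t/2$, one has $\langle t-\rs\rangle\approx\langle t\rangle\approx T$, so the targets are $|u_\Lambda|\lesssim T^{-1/2}(\cdots)$ and $|\partial u_\Lambda|\lesssim\langle r\rangle^{-1}T^{-1/2}(\cdots)$. On each dyadic shell $A_R$ with $R$ away from $3M$ the relevant local energy norm is a genuine weighted $H^1$ norm, so three-dimensional Sobolev embedding $H^2\hookrightarrow L^\infty$ on a ball of radius $\sim\langle r\rangle$, combined with the bound $\int_T^{2T}\|u_{\leq|\Lambda|+2}(\tau)\|_{H^1(A_R)}^2\,d\tau\lesssim\langle R\rangle^{1+\delta}\|u_{\leq|\Lambda|+3}\|_{LE_{S,\delta}[T,2T]}^2$ (the extra $\langle r\rangle^{-\delta}$ makes the sum over shells converge) and a pigeonhole in $\tau$ to select a good slice $\tau_0$, gives the bound at $\tau_0$. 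The bound at the remaining times is propagated using the energy identity for $\Box_g v_{\partial_t^j}$: since $[\Box_{g_S},\partial_t]=0$ and $|\partial_t h|\lesssim\epsilon\langle\tau\rangle^{-1}$ by \eqref{metricdecay}, the inhomogeneity is a small, $\langle\tau\rangle^{-1}$-decaying error, so the propagation costs no power of $T$. Finally, a pointwise value at a point with $r=3M$ is obtained by transporting the bound just proved at a nearby radius $r_1\ne3M$ along a radial segment, integrating the $LE_S^1$-controlled quantity $\partial_r u_\Lambda$.

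The region near the cone, $t/2\leq\rs\leq\tau+C_0$, is the main case. Here $r\approx\rs\approx\tau\approx T$. Fixing $\omega$ and integrating inward from where $u_\Lambda$ vanishes,
\[
u_\Lambda(\tau,\rs\omega)=-\int_{\rs}^{\tau+C_0}\partial_{s^*}u_\Lambda(\tau,s^*\omega)\,ds^*,\qquad
\partial_{\rs}u_\Lambda(\tau,\rs\omega)=-\int_{\rs}^{\tau+C_0}\partial_{s^*}^2u_\Lambda(\tau,s^*\omega)\,ds^*,
\]
and in the second identity I would bound $\partial_{s^*}^2 u_\Lambda$ by Lemma~\ref{KS1}, $|\partial_{s^*}^2u_\Lambda|\lesssim\langle\tau-s^*\rangle^{-1}|\partial u_{\leq|\Lambda|+3}|+(\text{small})$. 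Applying Cauchy--Schwarz with a weight $\langle\tau-s^*\rangle^{\beta}$, chosen so that the companion integral $\int_{\rs}^{\tau+C_0}\langle\tau-s^*\rangle^{-\gamma}\,ds^*\lesssim\langle\tau-\rs\rangle^{1-\gamma}$ produces exactly the factor $\langle\tau-\rs\rangle^{1/2}$, respectively $\langle\tau-\rs\rangle^{-1/2}$, then integrating over $\S^2$, using $H^2(\S^2)\hookrightarrow L^\infty(\S^2)$ (the two rotations), and using $r\leq s$ on the range, yields
\[
r^2|u_\Lambda(\tau,\rs\omega)|^2\lesssim\langle\tau-\rs\rangle\!\int_{s^*\geq\rs}\!|\partial u_{\leq|\Lambda|+2}(\tau)|^2\,dx,\qquad
r^2|\partial u_\Lambda(\tau,\rs\omega)|^2\lesssim\langle\tau-\rs\rangle^{-1}\!\int_{s^*\geq\rs}\!\langle\tau-s^*\rangle^{\beta}|\partial u_{\leq|\Lambda|+5}(\tau)|^2\,dx.
\]
The remaining fixed-time weighted $L^2$ quantities are then bounded, uniformly in $\tau\in[T,2T]$, by $\|u_{\leq|\Lambda|+13}\|_{LE_S^1[T,2T]}$: for a pigeonholed slice $\tau_0$ this is a mean-value argument against $\int_T^{2T}(\cdots)\,d\tau$, which is finite because the $\langle\tau-s^*\rangle$-weights are supplied by Lemma~\ref{tangweight} and the Hardy inequality \eqref{LRHardy} of Lemma~\ref{Hardylemma}; for general $\tau$ it is propagated by the energy/flux identity exactly as in the previous paragraph, the errors again being small and $\langle\tau\rangle^{-1}$-decaying because $\Box_g u=0$ and $|\partial h|\lesssim\epsilon\langle\tau\rangle^{-1}$ in this region.

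I expect the region near the cone to be the main obstacle, and within it the simultaneous bookkeeping that (i) extracts the correct powers of $\langle t-\rs\rangle$ from the radial integration and the weighted Cauchy--Schwarz, (ii) keeps every estimate free of powers of $T$ --- which forces one to always pass through a pigeonholed slice and to propagate with an \emph{integrated} error carrying $\langle\tau\rangle^{-1}$ decay rather than with a fixed-time energy bound --- and (iii) never differentiates transversally at the photonsphere, which is precisely why Lemma~\ref{elliptic} is invoked at the very start to replace all spatial derivatives by time derivatives. The only other delicate point is the passage from the $L^2$-in-time control that $LE_S^1$ provides to the $L^\infty$-in-time bound asserted in the theorem, and this too is handled uniformly on the dyadic block by the energy identity combined with the smallness and decay of $\Box_g u_\Lambda$ established along the way.
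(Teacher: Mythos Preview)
Your strategy diverges substantially from the paper's, and the near-cone estimate for $\partial u_\Lambda$ contains a real gap. The paper never works at fixed time: it proves a spacetime Sobolev lemma (Lemma~\ref{l:l2tolinf}) in exponential coordinates, using one factor of $S$ and two of $\Omega$, on double-dyadic slabs $C_T^R$ (dyadic in $r$) and $C_T^U$ (dyadic in $t-\rs$). Since $LE_S^1[T,2T]$ is already a spacetime $L^2$ norm, this feeds directly into the right-hand side; no pigeonholing, no propagation, and no fixed-time energy is needed. On $C_T^U$ one has $\la t-\rs\ra\approx U$ throughout, so the Klainerman--Sideris bound $|\partial^2 u_\Lambda|\lesssim U^{-1}|\partial u_{\leq|\Lambda|+3}|+U^{-2}|u_{\leq|\Lambda|}|$ can be pulled out as a constant, and a Hardy-type bound \eqref{Hrdycone} relates $\|w\|_{L^2(C_T^U)}$ back to $\|w\|_{LE_S^1}$. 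This is how the correct $U^{-1/2}$ emerges for $\partial u_\Lambda$.

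Your radial-integration argument cannot produce that factor. After $|\partial_{\rs}^2u_\Lambda|\lesssim\la\tau-s^*\ra^{-1}|\partial u_{\leq|\Lambda|+3}|$, any Cauchy--Schwarz split
\[
\Bigl|\int_{\rs}^{\tau+C_0}\la\tau-s^*\ra^{-1}|\partial u|\,ds^*\Bigr|^2
\leq\Bigl(\int_{\rs}^{\tau+C_0}\la\tau-s^*\ra^{-2-\beta}\,ds^*\Bigr)\Bigl(\int_{\rs}^{\tau+C_0}\la\tau-s^*\ra^{\beta}|\partial u|^2\,ds^*\Bigr)
\]
has the first factor equal to $O(1)$ when $\beta>-1$ and $\approx\la\tau-\rs\ra^{-1-\beta}$ when $\beta<-1$; in neither regime do you get $\la\tau-\rs\ra^{-1}$ together with a second factor that is controllable at fixed time by $\|u\|_{LE_S^1}$ alone. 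The displayed formula you wrote for $r^2|\partial u_\Lambda|^2$ therefore does not follow. A secondary issue: you invoke Lemma~\ref{KS1}, but that lemma carries the restriction $|\Lambda|\leq N/2$, while the theorem allows $|\Lambda|\leq N-13$; the paper proves a separate version (Lemma~\ref{KS2}, valid for all $|\Lambda|\leq N$) precisely for this purpose. Finally, even if the pigeonhole-and-propagate mechanism could be made to work for $u_\Lambda$, it is unnecessary once you use $S$ in the Sobolev step, and it introduces the extra circularity of needing $\sup_\tau E[u](\tau)$ controlled by $\|u\|_{LE_S^1[T,2T]}$ alone, which is not a priori available.
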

 In particular we obtain by Theorem~\ref{higherLE} that
\begin{equation}\label{ptdecayu2}
|u_{\leq N-13}| \lesssim \la t\ra^{-1+C\epsilon} \la t-\rs\ra^{1/2}
\end{equation}
\begin{equation}\label{ptdecaydu2}
|\partial u_{\leq N-13}| \lesssim \la t\ra^{C\epsilon} \la r\ra^{-1} \la t-\rs\ra^{-1/2}
\end{equation}
\begin{proof}
 We follow notation from \cite{MTT}. For the region
 \[
C_{T} = \{ T \leq t \leq 2T, \ \ r \leq t\}.
\]
 we use a double dyadic decomposition of it
with respect to either the size of $t-\rs$ or the size of $r$, depending
on whether we are close or far from the cone,
\[
C_{T} ={\bigcup}_{1\leq  R \leq T/4}  C_{T}^{R} \, \,\,\bigcup\,\,\, {\bigcup}_{1\leq  U < T/4} C_T^{U}
\]
where for $R,U > 1$ we set
\[
 C_{T}^{R} = C_T \cap \{ R < r < 2R \},
\qquad
C_{T}^{U} = C_T \cap \{ U < t-\rs < 2U\}
\]
while for $R=1$ and $U= 1$ we have
\[
 C_{T}^{R=1} = C_T \cap \{ 0 < r < 2 \},
\qquad
C_{T}^{U=1} = C_T \cap \{ 0 < t-\rs < 2\}
\]
The sets $C_T^R$ and $C_T^U$ represent the setting in which we apply
Sobolev embeddings, which allow us to obtain pointwise bounds from
$L^2$ bounds. Precisely, we have

\begin{lemma} \label{l:l2tolinf}
 For any function $w$ and all $T \geq 1$ and $1 \leq R,U \leq T/4$  we have
 \begin{equation}
  \!  \| w\|_{L^\infty(C_T^{R})} \lesssim
\frac{1}{T^{\frac12} R^{\frac32}} \sum_{i\leq 1, j \leq 2}
    \|S^i \Omega^j w\|_{L^2( C_T^{R})} +  \frac{1}{T^{\frac12} R^{\frac12}}
\sum_{i\leq 1, j \leq 2}  \|S^i \Omega^j \nabla w\|_{L^2( C_T^{R})},
    \label{l2tolinf-r}\end{equation}
respectively
  \begin{equation}
    \| w\|_{L^\infty(C_T^{U})} \lesssim \frac{1}{T^{\frac32} U^{\frac12}} \sum_{i\leq 1, j \leq 2}
    \|S^i \Omega^j w\|_{L^2( C_T^{U})} +  \frac{U^{\frac12}}{T^{\frac32}}
 \sum_{i\leq 1, j \leq 2} \|S^i \Omega^j \nabla w\|_{L^2(C_T^{U})}.
    \label{l2tolinf-u}\end{equation}
\end{lemma}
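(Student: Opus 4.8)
\emph{Proof proposal.} The plan is to prove both \eqref{l2tolinf-r} and \eqref{l2tolinf-u} by rescaling the dyadic region to a fixed‐size domain, applying an anisotropic Sobolev embedding there, and rescaling back. I describe \eqref{l2tolinf-r} in detail; \eqref{l2tolinf-u} is entirely analogous, the two relevant length scales being $U$ (in the $t-\rs$ direction) and $T$ (in the $t+\rs$ direction) rather than $R$ and $T$.

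First I would introduce rescaled variables on $C_T^R$: set $y_0 = t/T$ and $y = x/R$, so that $C_T^R$ is carried onto a fixed compact annular domain $K\subset\R^{1+3}$ (with $y_0\approx 1$, $|y|\approx 1$) independent of $T$ and $R$, and write $\tilde w(y_0,y) = w(Ty_0, Ry)$. The key point is the behavior of our vector fields under this change of variables. The rotations $\Omega$ commute with the scaling and are, on $K$, smooth fields tangent to the spheres with coefficients bounded above and below. The scaling field $S = t\partial_t + \tilde r\partial_{\tilde r}$ becomes on $K$ the field $y_0\partial_{y_0} + |y|\partial_{|y|}$ up to multiplication by quantities comparable to $1$ (because $t/T$, $r/R$ and $\tilde r/r$ are all $\approx 1$ on $C_T^R$), hence a smooth vector field on $K$ with bounded coefficients and bounded derivatives. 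Finally the rescaled raw radial derivative $\partial_{|y|} = R\,\partial_r$ is a unit‑size vector field on $K$, and $|\partial_{|y|}\tilde w|\lesssim R|\nabla w|$. Since $R\le T/4$, the spatial rescaling dominates the temporal one, and one checks that $\{S,\ \Omega,\ \partial_{|y|}\}$ span the tangent space of $K$ at every point: $\Omega$ furnishes the two sphere directions, $\partial_{|y|}$ the radial one, and $S$ together with $\partial_{|y|}$ the direction $y_0\partial_{y_0}$ (recall $y_0\approx 1$).

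The main step is then an anisotropic (tensor‑product) Sobolev embedding on $K$: one uses $H^1$ regularity in the one‑dimensional $S$‑direction, $H^2$ regularity in the two‑dimensional spherical directions, and $H^1$ regularity in the one remaining direction supplied by $\partial_{|y|}$. Since each exponent exceeds half the dimension of the corresponding factor, this product space embeds into $L^\infty(K)$, giving
\[
\|\tilde w\|_{L^\infty(K)} \lesssim \sum_{i\le 1,\ j\le 2}\bigl(\|S^i\Omega^j\tilde w\|_{L^2(K)} + \|S^i\Omega^j \partial_{|y|}\tilde w\|_{L^2(K)}\bigr),
\]
which is exactly the form of \eqref{l2tolinf-r} after returning to the original variables. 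Indeed $\|\tilde w\|_{L^\infty(K)} = \|w\|_{L^\infty(C_T^R)}$; the measure satisfies $dt\,dx \approx T R^3\, dy_0\, dy$ on $C_T^R$, so $\|f\|_{L^2(K)} \approx (TR^3)^{-1/2}\|f\|_{L^2(C_T^R)}$, producing the weight $T^{-1/2}R^{-3/2}$ in front of the first sum; and since $\partial_{|y|}\tilde w$ corresponds to $R$ times a component of $\nabla w$, the second sum acquires the extra factor $R\cdot(TR^3)^{-1/2} = T^{-1/2}R^{-1/2}$. For \eqref{l2tolinf-u} one uses the coordinates $s = t+\rs$, $\rho = t-\rs$ (so $\tilde r = \rs$ for large $r$), rescales by $s\mapsto s/T$, $\rho\mapsto \rho/U$; then $S$ becomes $\varsigma\partial_\varsigma + \varrho\partial_\varrho$, the unit‑size raw derivative is $\partial_\varrho = U\partial_\rho$ (which, since $U\le T/4$, differs from $U\partial_{\rs}$ by a negligible $S$‑controlled multiple of $\partial_\varsigma$), the volume element is $\approx T^3 U\, d\varsigma\, d\varrho\, d\omega$, and the same anisotropic Sobolev embedding yields \eqref{l2tolinf-u} with the weights $T^{-3/2}U^{-1/2}$ and $U^{1/2}T^{-3/2}$.

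The point requiring genuine care — and the main obstacle — is checking that the good fields $S$ and $\Omega$ together with a single raw derivative span the rescaled tangent space uniformly, and that exactly one raw derivative suffices: this is where one uses $R\le T/4$ (resp.\ $U\le T/4$), and it is what forces the precise multi‑index ranges $i\le 1$, $j\le 2$ (one derivative in the $S$‑direction, two in the angular directions). The remaining discrepancies — $\tilde r$ versus $r$ or $\rs$, the metric volume $dV_g$ versus $dt\,dx$, and $\partial_{\rs}$ versus $\partial_r$ — are harmless, being bounded multiplicative factors with bounded derivatives on the regions in question, hence absorbed into the implicit constants.
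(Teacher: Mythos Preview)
Your argument is correct and follows the same strategy as the paper: rescale the dyadic region to unit size, identify $S$, $\Omega$, and one raw derivative as unit-size vector fields spanning the tangent space there, and apply a tensor-product Sobolev embedding ($H^1$ in the $S$-direction, $H^2$ on the sphere, $H^1$ in the remaining direction).

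The only difference is cosmetic. The paper uses \emph{exponential} coordinates $t=e^s$, $r=e^{s+\rho}$ (respectively $t=e^s$, $t-\rs=e^{s+\rho}$), under which $S$ becomes \emph{exactly} $\partial_s$ and $r\partial_r$ (respectively $(t-\rs)\partial_{(t-\rs)}$) becomes exactly $\partial_\rho$, so the spanning statement and the fundamental-theorem-of-calculus step are immediate without the ``approximately coordinate vector field'' discussion you had to go through. Your linear rescaling $y_0=t/T$, $y=x/R$ (respectively $\varsigma=(t+\rs)/T$, $\varrho=(t-\rs)/U$) achieves the same thing but requires checking by hand that $S$ and the raw derivative combine to recover the missing coordinate direction; you did this correctly. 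The exponential change of variables is worth remembering as the canonical way to make scaling fields into coordinate fields.
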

\begin{proof}
  In exponential coordinates $(s,\rho,\omega)$ with $t = e^s$ and $r =
  e^{s+\rho}$ the region $C_T^R$ becomes a region of size $1$ in all directions. The bound \eqref{l2tolinf-r} follows from applying the fundamental theorem of calculus in the $s$ and $\rho$ directions,
  combined with the usual
  Sobolev embeddings on the sphere $\S^2$.  The
  same applies for \eqref{l2tolinf-u} in exponential coordinates
  $(s,\rho,\omega)$ with $t = e^s$ and $t-\rs = e^{s+\rho}$.
\end{proof}

  By applying \eqref{l2tolinf-r} to $u_{\Lambda}$ and taking the supremum over $R\leq\frac{T}2$ we obtain \eqref{ptdecayu} in the region $\rs\leq \frac{t}2$.

  Near the cone we use \eqref{l2tolinf-u} in conjunction with the Hardy-type inequality \eqref{LRHardy}. Let $\chi$ be a cutoff so that $\chi\equiv 1$ when $x\geq \frac12$ and $\chi\equiv 0$ when $x\leq \frac14$.  For any function $w$ we have
\begin{equation}\label{Hrdycone}
 U^{-1} \|w\|_{L^2( C_T^{U})} \lesssim \Big\|\frac{\chi(\rs/t) w}{\la t-\rs\ra}\Big\|_{L^2[T, 2T]L^2} \lesssim \|\partial_{\rs} (\chi(\rs/t) w)\|_{L^2[T, 2T]L^2} \lesssim T^{\frac12} \|w\|_{LE_S^1[T, 2T]}
\end{equation}
Using the inequality above for $w=S^i \Omega^j u_{\Lambda}$ in \eqref{l2tolinf-u} we obtain the desired estimate
 \[
 \|u_{\Lambda}\|_{L^\infty(C_T^{U})} \lesssim \frac{U^{\frac12}}{T} \|u_{\leq |\Lambda|+10}\|_{LE_S^1[T, 2T]}
 \]

  In order to estimate the derivatives, we need another Klainerman-Sideris type estimate for second derivatives of higher order terms.
\begin{lemma}\label{KS2}  Assume that $g$ is a Lorentzian metric satisfying the conditions \eqref{metricdecay}.
 Then for any multi index $|\Lambda|\leq N$ we have for $r\geq 2R_1$:
\begin{equation}\label{improvetwoderivs2}
|\partial^2 u_{\Lambda}| \lesssim \frac{t}{r\la t-\rs\ra} |\partial u_{\leq |\Lambda|+3}| + \frac{t^2}{r^2\la t-\rs\ra^{2}} |u_{\leq |\Lambda|}| + \frac{t}{\la t-\rs\ra} |(\Box_g u)_{\Lambda}|
\end{equation}
\end{lemma}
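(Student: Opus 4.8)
The plan is to prove \eqref{improvetwoderivs2} by induction on $|\Lambda|$, adapting the argument for Lemma~\ref{KS1}. For $\Lambda=0$ the claim is weaker than \eqref{improvetwoderivs}, so there is nothing new. Assuming it for all multi-indices of order $<|\Lambda|$, we pass as in the proof of Lemma~\ref{KS1} to the $x^*$ coordinates, let $\Box=-\partial_t^2+\sum_i\partial_{x_i^*}^2$, and apply \eqref{KStwoderivs} and \eqref{KSest} to $u_\Lambda$; this reduces the estimate to bounding $\frac{t}{\la t-\rs\ra}|\Box u_\Lambda|$.

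Commuting $\Box$ past $Z^\Lambda$ and trading $\Box$ for $\Box_g$ as in the derivation of \eqref{FLambda}, we write
\[
\Box u_\Lambda = (\Box_g u)_\Lambda + \sum_{\Lambda_1+\Lambda_2=\Lambda}\!\!\big(h^{\alpha\beta}_{\Lambda_1}\partial_\alpha\partial_\beta u_{\Lambda_2}+W^\alpha_{\Lambda_1}\partial_\alpha u_{\Lambda_2}\big) + R_\Lambda ,
\]
where $R_\Lambda$ gathers the commutators $[\Box_{g_S},Z^\Lambda]u$ together with the Minkowski--Schwarzschild discrepancy $(\Box_{g_S}-\Box)u_\Lambda$. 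The first term $(\Box_g u)_\Lambda$ is left alone; times $\frac{t}{\la t-\rs\ra}$ it is the last term of \eqref{improvetwoderivs2} (the lower order copies of $\Box_g u$ produced by the recursion vanish when $\Box_g u=0$, as in all our applications, and are otherwise handled as in Lemma~\ref{KS1}). The remainder $R_\Lambda$ is supported in $r\geq 2R_1$ and carries an extra $r^{-1}$ (from $(1-2M/r)^{-1}-1$, from $\rs-r\lesssim\log r$, and from differentiating the $\omega$-coefficients), so $\frac{t}{\la t-\rs\ra}|R_\Lambda|\lesssim\frac{t}{r\la t-\rs\ra}\big(|\partial^2 u_{<|\Lambda|}|+|\partial u_{\leq|\Lambda|+3}|\big)$; since $\frac{t}{r\la t-\rs\ra}\lesssim1$ on $\{2R_1\leq r\lesssim t\}$, the second-derivative terms are of strictly smaller vector-field order and are absorbed by the induction hypothesis. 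This is exactly the bookkeeping already performed for Lemma~\ref{KS1} and around \eqref{higherLES}.

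The one genuinely new point is the perturbative sum, and it is what forces the term $\frac{t^2}{r^2\la t-\rs\ra^2}|u_{\leq|\Lambda|}|$ into the statement. Split each product by Leibniz according to whether $|\Lambda_1|\leq\frac N2$ or $|\Lambda_2|\leq\frac N2$. If $|\Lambda_1|\leq\frac N2$ then $|h_{\Lambda_1}|,|W_{\Lambda_1}|\lesssim\epsilon\la t-\rs\ra^{1/2}\la t\ra^{-1}$ by \eqref{metricdecay} and Lemma~\ref{wavecoordscommut}, while $|\partial^2 u_{\Lambda_2}|$ is controlled by the induction hypothesis (and $|\partial u_{\Lambda_2}|$ trivially); the gain $\epsilon\la t-\rs\ra^{1/2}\la t\ra^{-1}$ beats every weight that appears and the contribution lands inside the right-hand side of \eqref{improvetwoderivs2}. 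If instead $|\Lambda_1|>\frac N2$, then $|\Lambda_2|\leq\frac N2\leq N_1-1$, so $\partial^{\leq2}u_{\Lambda_2}$ is of low order and \eqref{u-decay} gives $|\partial^2 u_{\Lambda_2}|+|\partial u_{\Lambda_2}|\lesssim\epsilon\la r\ra^{-1}\la t-\rs\ra^{-1/2}$, while \eqref{metricvsu} and Lemma~\ref{wavecoordscommut} give $|h_{\Lambda_1}|\lesssim|u_{\leq\Lambda}|$ and $|W_{\Lambda_1}|\lesssim|\partial u_{\leq\Lambda}|+|u_{\leq\Lambda}|$. Multiplying $\frac{\epsilon}{r\la t-\rs\ra^{1/2}}(|\partial u_{\leq\Lambda}|+|u_{\leq\Lambda}|)$ by $\frac{t}{\la t-\rs\ra}$ and using $\la t-\rs\ra\geq1$, $\la t-\rs\ra\lesssim\la t\ra$ (as $u$ vanishes for $r\gtrsim t$) and $r\geq 2R_1$, we obtain a bound by $\frac{t}{r\la t-\rs\ra}|\partial u_{\leq|\Lambda|+3}|+\frac{t^2}{r^2\la t-\rs\ra^2}|u_{\leq|\Lambda|}|$, via the numerical comparison $\frac{\epsilon t}{r\la t-\rs\ra^{3/2}}\lesssim\frac{t^2}{r^2\la t-\rs\ra^2}$, i.e. $\epsilon\lesssim\frac{tr}{\la t-\rs\ra^{1/2}}$, which holds because $r\geq 2R_1$ is large and $\la t-\rs\ra^{1/2}\lesssim\la t\ra$.

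The main obstacle is organizational rather than conceptual: one must track derivative counts through the Leibniz splitting and the commutators so that the loss stays within the budget $u_{\leq|\Lambda|}$, $\partial u_{\leq|\Lambda|+3}$, $(\Box_g u)_\Lambda$, and check in each case that the weight produced is dominated by one of the three model weights $\frac{t}{r\la t-\rs\ra}$, $\frac{t^2}{r^2\la t-\rs\ra^2}$, $\frac{t}{\la t-\rs\ra}$ on $\{2R_1\leq r\lesssim t\}$ — the hypothesis $r\geq 2R_1$ entering precisely through the comparison above, which is what allows the full range $|\Lambda|\leq N$ here, in contrast to the range $|\Lambda|\leq\frac N2$ in Lemma~\ref{KS1}.
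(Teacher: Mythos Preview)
Your overall strategy matches the paper's: induction on $|\Lambda|$, the Klainerman--Sideris reduction \eqref{KSLbdest}, and the Leibniz splitting by whether $|\Lambda_1|\leq N/2$ or $|\Lambda_2|\leq N/2$. The case $|\Lambda_1|\leq N/2$ is handled correctly, as is the linear remainder $R_\Lambda$.

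The gap is in the case $|\Lambda_2|\leq N/2$, precisely in the term $h_{\Lambda_1}^{\alpha\beta}\partial_\alpha\partial_\beta u_{\Lambda_2}$. You invoke \eqref{u-decay} alone to get $|\partial^2 u_{\Lambda_2}|\lesssim \epsilon r^{-1}\la t-\rs\ra^{-1/2}$, and then try to absorb
\[
\frac{t}{\la t-\rs\ra}\cdot\frac{\epsilon}{r\la t-\rs\ra^{1/2}}\,|u_{\leq|\Lambda|}|
=\frac{\epsilon t}{r\la t-\rs\ra^{3/2}}\,|u_{\leq|\Lambda|}|
\]
into $\frac{t^2}{r^2\la t-\rs\ra^2}|u_{\leq|\Lambda|}|$. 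Your ``numerical comparison'' is miswritten: the actual requirement is $\epsilon\lesssim \dfrac{t}{r\la t-\rs\ra^{1/2}}$ (not $\epsilon\lesssim tr/\la t-\rs\ra^{1/2}$), and this inequality \emph{fails} in the interior of the cone. For example at $r\approx t/2$, where $\la t-\rs\ra\approx t/2$, it would demand $\epsilon\lesssim t^{-1/2}$, which is false for large $t$. So the step does not close.

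The paper avoids this by using Lemma~\ref{KS1} (not just \eqref{u-decay}) on the low-order factor: since $|\Lambda_2|\leq N/2$, one feeds $|\partial u_{\leq|\Lambda_2|+3}|\lesssim \epsilon r^{-1}\la t-\rs\ra^{-1/2}$ from \eqref{u-decay} into \eqref{improvetwoderivs} to get the sharper bound
\[
|\partial^2 u_{\Lambda_2}|\lesssim \frac{\epsilon t}{r^2\la t-\rs\ra^{3/2}},
\]
which then gives $\frac{t}{\la t-\rs\ra}|h_{\Lambda_1}||\partial^2 u_{\Lambda_2}|\lesssim \dfrac{\epsilon t^2}{r^2\la t-\rs\ra^{5/2}}|u_{\leq|\Lambda|}|\leq \dfrac{t^2}{r^2\la t-\rs\ra^{2}}|u_{\leq|\Lambda|}|$ outright. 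The same refinement, together with the more precise form of Lemma~\ref{wavecoordscommut} (the weight $t^{1+\delta}r^{-1-\delta}\la t-\rs\ra^{-1/2-\delta}$ on $|u_{\leq\Lambda}|$, rather than the crude $|W_{\Lambda_1}|\lesssim|\partial u_{\leq\Lambda}|+|u_{\leq\Lambda}|$ you use), is what makes the $W$ contribution fit. In short, the missing idea is that Lemma~\ref{KS1} must be applied \emph{again} to the low factor to gain the extra $\frac{t}{r\la t-\rs\ra}$; raw \eqref{u-decay} on second derivatives is not enough.
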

 The proof is similar to the one of Lemma~\ref{KS1}. We start with the estimate \eqref{KSLbdest}, and we need to bound $|\Box u_{\Lambda}|$. We again have
\[
|\Box u_{\Lambda}| \lesssim |(\Box u)_{\Lambda}| + |(\Box u)_{\leq |\Lambda|-3}|
\]
 and
\[
 |(\Box_{g_S} u - \Box u)_{\Lambda}| \lesssim r^{-1} (|\partial^2 u_{\Lambda}| + |\partial u_{\leq |\Lambda|}|)
\]
  We now have
\[ (\Box_g u - \Box_{g_S} u)_{\Lambda} = \sum_{\Lambda_1+\Lambda_2 = \Lambda} h_{\Lambda_1}^{\alpha\beta} \partial_{\alpha}\partial_{\beta}u_{\Lambda_2} + \sum_{\Lambda_1+\Lambda_2 = \Lambda} W_{\Lambda_1}^{\alpha} \partial_{\alpha}u_{\Lambda_2} \]

 Either $|\Lambda_1|\leq \frac{N}2$ or $|\Lambda_2|\leq \frac{N}2$.
  In the first case we have, similarly to the proof of Lemma~\ref{KS1}
\[
|h_{\Lambda_1}|\lesssim \frac{\epsilon\la t-\rs\ra^{1/2}}{\la t\ra}, \qquad |W_{\Lambda_1}|\lesssim \frac{1}{\la r\ra\la t-\rs\ra^{\delta}}
\]
 so
\[
\frac{t}{\la t-\rs\ra} |h_{\Lambda_1}^{\alpha\beta} \partial_{\alpha}\partial_{\beta}u_{\Lambda_2}| \lesssim \frac{\epsilon}{\la t-\rs\ra^{1/2}} (|\partial^2 u_{\Lambda}| + |\partial^2 u_{\leq |\Lambda|-1}|)
\]
\[
\frac{t}{\la t-\rs\ra} |W_{\Lambda_1}^{\alpha} \partial_{\alpha}u_{\Lambda_2}| \lesssim \frac{t}{r\la t-\rs\ra} |\partial u_{\leq |\Lambda|}|
\]

  In the second case, we have by Lemma~\ref{KS1}
\[
|\partial u_{\Lambda_2}| \lesssim \frac{\epsilon}{r\la t-\rs\ra^{\frac12}}, \qquad |\partial^2 u_{\Lambda_2}|\lesssim \frac{\epsilon t}{r^2\la t-\rs\ra^{\frac32}}
\]
so
\[
\frac{t}{\la t-\rs\ra} |h_{\Lambda_1}^{\alpha\beta} \partial_{\alpha}\partial_{\beta}u_{\Lambda_2}| \lesssim \frac{\epsilon t^2}{r^2\la t-\rs\ra^{5/2}} |u_{\leq |\Lambda|}|
\]
\[
\frac{t}{\la t-\rs\ra} |W_{\Lambda_1}^{\alpha} \partial_{\alpha}u_{\Lambda_2}| \lesssim \frac{t}{r\la t-\rs\ra^{3/2}} |W_{\leq |\Lambda|}| \lesssim \frac{t}{r\la t-\rs\ra^{3/2}} |\partial u_{\leq |\Lambda|}| +
\frac{t^2}{r^2\la t-\rs\ra^{2}} |u_{\leq |\Lambda|}|
\]

The conclusion \eqref{improvetwoderivs2} now follows as in Lemma~\ref{KS1}.

Coming back to the proof of \eqref{ptdecaydu}, we apply \eqref{l2tolinf-r} and \eqref{l2tolinf-u} to $\partial u_{\Lambda}$. We obtain
\[ \begin{split}
\| \partial u_{\Lambda}\|_{L^\infty(C_T^{R})} \lesssim & \frac{1}{T^{\frac12} R^{\frac32}} \sum_{i\leq 1, j \leq 2}
    \|S^i \Omega^j \partial u_{\Lambda}\|_{L^2( C_T^{R})} +  \frac{1}{T^{\frac12} R^{\frac12}}
\sum_{i\leq 1, j \leq 2}     \|S^i \Omega^j \partial^2 u_{\Lambda}\|_{L^2( C_T^{R})} \\ & \lesssim \frac{1}{T^{\frac12}R} \|u_{\leq |\Lambda|+13}\|_{LE_S^1[T, 2T]}
\end{split}\]
where we used \eqref{improvetwoderivs2} to bound the last term.
Similarly, by using \eqref{improvetwoderivs2} and \eqref{Hrdycone}, we obtain
\[ \begin{split}
\| \partial u_{\Lambda}\|_{L^\infty(C_T^{U})} \lesssim & \frac{1}{T^{\frac32} U^{\frac12}} \sum_{i\leq 1, j \leq 2}
    \|S^i \Omega^j \partial u_{\Lambda}\|_{L^2( C_T^{U})} +  \frac{U^{\frac12}}{T^{\frac32}}
\sum_{i\leq 1, j \leq 2}     \|S^i \Omega^j \partial^2 u_{\Lambda}\|_{L^2(C_T^{U})} \\ & \lesssim \frac{1}{T^{\frac12}U^{\frac12}} \|u_{\leq |\Lambda|+13}\|_{L^2( C_T^{U})} +
 \frac{1}{(UT)^{\frac32}} \|u_{\leq |\Lambda|+10}\|_{L^2( C_T^{U})}  \lesssim \frac{1}{TU^{\frac12}} \|u_{\leq |\Lambda|+13}\|_{LE_S^1[T, 2T]}
\end{split}\]
\end{proof}

\newsection{Boundedness of the lower order norms}
 In order to close our bootstrap, having growing local energy norms does not suffice. In this section we will show that the lower order norms are actually bounded. We will denote by $\mathcal T = \{\overline\partial\}$ the set of tangential derivatives, and let $\partial_T = T^{\alpha}\partial_{\alpha}$.
We start with the following analogous result to Theorem~\ref{LE}.
\begin{theorem}\label{LESbded} Let $u$ solve $\Box_g u = F$, where $g$ is as in Theorem~\ref{LE}.  Then we have
 \begin{equation}\label{0lowLES}
 \|\partial  u\|_{L^{\infty}[t_0, t_1] L^2}^2 + \|u\|_{LE_S^1[t_0, t_1]}^2 \lesssim \|\partial u_{\leq 1}(t_0)\|_{L^2}^2 + \|F_{\leq 1}\|_{L^1L^2+LE^*_S}^2
\end{equation}
\end{theorem}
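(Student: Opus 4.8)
The plan is to deduce \eqref{0lowLES} from Theorem~\ref{LE} by showing that its only obstruction to boundedness, the photonsphere error
\[
\mathcal{B}[u]:=\int_{\M_{[t_0,t_1]}^{ps}}\frac{\epsilon}{t}\,|\partial u|^2\,dV_g,
\]
is itself controlled, at the price of one derivative. Applying Theorem~\ref{LE} to $u$ (with $F_2\equiv 0$) together with Cauchy--Schwarz gives
\[
\|\partial u\|_{L^\infty[t_0,t_1]L^2}^2+\|u\|_{LE_S^1[t_0,t_1]}^2\ \lesssim\ \mathcal{B}[u]+\|\partial u(t_0)\|_{L^2}^2+\|F\|_{L^1L^2+LE_S^*}^2,
\]
so it suffices to bound $\mathcal{B}[u]$ by $\epsilon$ times the left-hand side of \eqref{0lowLES} plus the right-hand side of \eqref{0lowLES} itself. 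We may assume $t_0\geq 1$, the slab $0\leq t\leq 1$ being handled by local theory and \eqref{localenergyflat}; thus $t^{-1}\lesssim 1$ on the region of integration.

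Near $r=3M$ decompose $|\partial u|^2\approx|\partial_r u|^2+|\partial_t u|^2+|\ang u|^2$. Since $\M_{[t_0,t_1]}^{ps}$ is bounded in $r$, the weight $\la r\ra^{-1/2}$ in $LE$ is comparable to $1$ there and the radial part is absorbable at once: $\int_{\M_{[t_0,t_1]}^{ps}}\frac{\epsilon}{t}|\partial_r u|^2\,dV_g\lesssim\epsilon\|\partial_r u\|_{LE[t_0,t_1]}^2\lesssim\epsilon\|u\|_{LE_S^1[t_0,t_1]}^2$. The difficulty is the trapped pair $\partial_t u$, $\ang u$, which carry the degenerate factor $1-3M/r$ in $LE_S^1$. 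For $\partial_t u$ I would commute with the stationary Killing field $\partial_t$ of $g_S$ and regard $\partial_t u$ as a \emph{zeroth-order} quantity of $v=\partial_t u$:
\[
\int_{\M_{[t_0,t_1]}^{ps}}\frac{\epsilon}{t}|\partial_t u|^2\,dV_g\ \lesssim\ \epsilon\int_{\M_{[t_0,t_1]}^{ps}}|v|^2\,dV_g\ \lesssim\ \epsilon\,\|r^{-1}v\|_{LE[t_0,t_1]}^2\ \lesssim\ \epsilon\,\|v\|_{LE_S^1[t_0,t_1]}^2 .
\]
The last norm is estimated by Theorem~\ref{LE} applied to $v$; this is where the one derivative is spent. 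Its commutator source is $[\Box_g,\partial_t]u+\partial_t F=(\partial_t h^{\alpha\beta})\partial_\alpha\partial_\beta u+(\partial_t W^\alpha)\partial_\alpha u+\partial_t F$, whose coefficients enjoy the improved decay near the trapped set --- $|\partial_t h^{\alpha\beta}|\lesssim\epsilon\la t\ra^{-1-\delta}$ is integrable in $t$ by \eqref{cpt2}, and $|\partial_t W^\alpha|\lesssim\la t\ra^{-1/2}|\partial\partial_t u|+\text{l.o.t.}$ by \eqref{W-est} --- and whose own photonsphere error $\int_{\M^{ps}}\frac{\epsilon}{t}|\partial v|^2$ is handled by the elliptic estimate Lemma~\ref{elliptic}, which trades the spatial second derivatives of $v$ for time derivatives and $LE_S^1$-norms of order $\leq 1$.

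The angular term needs the equation. Using the spherical Poincar\'e inequality $\int_{\S^2_r}|\ang u|^2\lesssim\bigl(\int_{\S^2_r}|u|^2\bigr)^{1/2}\bigl(\int_{\S^2_r}|\ang^2 u|^2\bigr)^{1/2}$ and then solving $\Box_g u=F$ for the angular Laplacian, near $r=3M$ one replaces $\ang^2 u$ by $\partial_t^2 u$, $\partial_r((1-2M/r)\partial_r u)$, $F$ and the small perturbation $h^{\alpha\beta}\partial_\alpha\partial_\beta u+W^\alpha\partial_\alpha u$ (with $|h^{\alpha\beta}|\lesssim\epsilon\la t\ra^{-1/2}$ by \eqref{cpt1}). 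After Cauchy--Schwarz this bounds $\int_{\M^{ps}}\frac{\epsilon}{t}|\ang u|^2$ by $\epsilon$ times $\int_{\M^{ps}}\frac1t|u|^2\lesssim\|u\|_{LE_S^1}^2$ and by $\int_{\M^{ps}}\frac1t(|\partial^2 u|^2+|F|^2)$; the second-derivative term is controlled by Lemma~\ref{elliptic}, by the estimate for $v=\partial_t u$ just obtained, and --- for its radial part --- by the non-degenerate absorption already used. Summing all contributions and absorbing the $\epsilon$-small multiples of $\|\partial u\|_{L^\infty L^2}^2+\|u\|_{LE_S^1}^2$ into the left-hand side yields \eqref{0lowLES}.

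The main obstacle is the circularity in the last two steps: a careless treatment of $\partial_t^2 u$ (equivalently of $\ang^2 u$) at the photonsphere recreates a term with the degenerate weight $1-3M/r$, and since $\int t^{-1}\,dt$ diverges such a residue would reinstate the $t^{C\epsilon}$ growth of \eqref{LESgrow}. Breaking the loop requires, at each occurrence of $\partial_t^2$, first using $\Box_g u=F$ to convert it into the spatial elliptic operator $\mathscr{L}_0$ of \eqref{ell0}, and then invoking only the non-degenerate (the $\partial_r$- and $r^{-1}$-weighted) part of Lemma~\ref{elliptic}, so that what propagates to the next step is strictly lower order or manifestly absorbable. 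The integrability of $\partial_t h$ from \eqref{cpt2} and the $\la t\ra^{-1/2}$ gain of \eqref{W-est} are precisely what make this reduction terminate with only the one-derivative loss recorded in \eqref{0lowLES}; this is the bounded-local-energy analogue of the argument in \cite{MTT}.
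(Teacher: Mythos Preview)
Your approach differs substantially from the paper's, and the circularity you flag in the final paragraph is a genuine gap that your proposed resolution does not close.

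The paper's argument is much shorter. It accepts the polynomially growing estimate \eqref{LESgrow} at one derivative higher (obtained exactly as in the first commutation step of Theorem~\ref{higherLE}, now keeping track of the forcing $F$), and then decomposes the photonsphere error dyadically in time:
\[
\mathcal B[u]\ \lesssim\ \epsilon\sum_{T\ \text{dyadic}}T^{-1}\!\int_{\M_{[T,2T]}^{ps}}\!|\partial u|^2\,dV_g\ \lesssim\ \epsilon\sum_{T}T^{-1}\,\|u_{\leq 1}\|_{LE_S^1[t_0,2T]}^2\ \lesssim\ \epsilon\sum_{T}T^{-1+C\epsilon}\Bigl(\|\partial u_{\leq 1}(t_0)\|_{L^2}^2+\|F_{\leq 1}\|_{L^1L^2+LE_S^*}^2\Bigr).
\]
The middle inequality uses that $\int_{\M^{ps}}|\partial u|^2$ is dominated by the $\|r^{-1}\cdot\|_{LE}$ piece of $\|u_{\leq 1}\|_{LE_S^1}$. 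Since $\sum_T T^{-1+C\epsilon}<\infty$ for small $\epsilon$, this bounds $\mathcal B[u]$ directly by the right-hand side of \eqref{0lowLES}; plugging back into Theorem~\ref{LE} finishes the proof. The point you are missing is that the weight $t^{-1}$ beats the growth $t^{C\epsilon}$ after dyadic summation, so there is no need to avoid the growing bound at all.

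Your attempt to bound $\mathcal B[u]$ directly, without ever passing through a growing estimate, breaks down at the step where you use the equation to handle $\partial_t^2 u$. Writing $g^{tt}\partial_t^2 u=-\mathscr L_0 u+F+\text{(first order)}$ does not help: near $r=3M$ the operator $\mathscr L_0$ contains $r^{-2}\ang^2$ with a nonvanishing coefficient, so you have merely traded one trapped second derivative for the other. The ``non-degenerate part'' of Lemma~\ref{elliptic} that you invoke (the estimates \eqref{hotr} and \eqref{hotps} in its proof) controls only $\partial\partial_r u$ and $(1-\tfrac{3M}{r})\partial^2 u$ in $L^2_tL^2_x$; it does \emph{not} bound $\int_{\M^{ps}}|\ang^2 u|^2$ without the vanishing weight, and that is exactly what your reduction produces. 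Each pass through the loop therefore gains a factor of $\epsilon$ but spends an additional derivative, so the iteration does not terminate with the single derivative loss claimed in \eqref{0lowLES}.
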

\begin{proof}
 We have by Theorem~\ref{higherLE}
 \[ \begin{split}
& \int_{\M_{[t_0,t_1]}^{ps}} \frac{\epsilon}{t}|\partial u|^2 dV_g  \lesssim \sum_{T dyadic} \int_{\M_{[T, 2T]}^{ps}} t^{-1} |\partial u|^2 dV_g  \lesssim \sum_{T dyadic} T^{-1} T^{C\epsilon} (\|\partial u_{\leq 1}(t_0)\|_{L^2}^2 \\ & + \|F_{\leq 1}\|_{L^1L^2+LE^*_S}^2)  \lesssim \|\partial u_{\leq 1}(t_0)\|_{L^2}^2 + \|F_{\leq 1}\|_{L^1L^2+LE^*_S}^2
 \end{split}\]
 The result now follows from \eqref{LES} and Gronwall's inequality.
\end{proof}
 We now prove a higher order version of Theorem~\ref{LESbded} which is a refinement of
 Theorem~\ref{higherLE} for low norms.
\begin{theorem}\label{lowerLE} Let $u$ solve \eqref{maineeqn}, where $g$ is as in Theorem~\ref{higherLE}.
Then for $\tN \leq N-3$, we have:
\begin{equation}\label{lowLES}
 \|\partial  u_{\leq \tN}\|_{L^{\infty}[t_0, t_1] L^2}^2 + \|u_{\leq \tN}\|_{LE_S^1[t_0, t_1]}^2 \lesssim \|\partial u_{\leq N}(t_0)\|_{L^2}^2
\end{equation}
\end{theorem}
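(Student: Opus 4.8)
The plan is to bootstrap off Theorem~\ref{higherLE}, which gives the growing bound $\mathcal{E}_N(t)\lesssim t^{C_N\epsilon}\|\partial u_{\leq N}(t_0)\|_{L^2}^2$, and to convert the loss of one power of $\la t\ra$ in the low-regularity energy $\mathcal{E}_{\tN}$ into a loss of derivatives instead. The source of the growth, as emphasized after \eqref{diffQint} and in the proof of Theorem~\ref{higherLE}, is exactly the photon-sphere error term $\int_{\M^{ps}_{[t_0,t_1]}}\frac{\epsilon}{t}|\partial u|^2\,dV_g$ appearing on the right of \eqref{LES}. So the first step is to run the commutator argument of Theorem~\ref{higherLE} with $|\Lambda|\leq\tN=N-3$, but this time \emph{keeping} the photon-sphere term on the right rather than absorbing it with Gronwall. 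That is, for each $\Lambda$ with $|\Lambda|\leq\tN$ one obtains, in place of \eqref{higherLES},
\[
\|\partial u_{\leq\tN}\|_{L^\infty[t_0,t_1]L^2}^2+\|u_{\leq\tN}\|_{LE_S^1[t_0,t_1]}^2\lesssim \|\partial u_{\leq\tN}(t_0)\|_{L^2}^2+\int_{\M^{ps}_{[t_0,t_1]}}\frac{\epsilon}{t}|\partial u_{\leq\tN+2}|^2\,dV_g,
\]
where the $+2$ comes from the fact that near $r=3M$ the commutators $[\Box_{g_S},Z^\Lambda]$ and the $F_2$ term in \eqref{LES} cost two extra derivatives (cf.\ \eqref{vfcpt3}) — this is precisely why the statement only asks for $\tN\leq N-3$, leaving one derivative of room so that $\tN+2\leq N-1<N$.

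The second step is to estimate the photon-sphere integral by chopping time into dyadic pieces $[T,2T]$ and invoking Theorem~\ref{higherLE} on each piece, exactly as in the proof of Theorem~\ref{LESbded}:
\[
\int_{\M^{ps}_{[t_0,t_1]}}\frac{\epsilon}{t}|\partial u_{\leq\tN+2}|^2\,dV_g\lesssim\sum_{T\text{ dyadic}}T^{-1}\,\mathcal{E}_{\tN+2}(2T)\lesssim\sum_T T^{-1}\,T^{C\epsilon}\|\partial u_{\leq N}(t_0)\|_{L^2}^2,
\]
using $\tN+2\leq N$. For $\epsilon$ small enough the exponent $-1+C\epsilon<0$, so the dyadic sum converges and is $\lesssim\|\partial u_{\leq N}(t_0)\|_{L^2}^2$. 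Combining the two steps yields \eqref{lowLES}. One should also recall, as in \eqref{partialtcommutN}, that an honest induction on the number of vector fields is needed: when commuting, the inhomogeneous terms $F_\Lambda$, $L_\Lambda$ from \eqref{FLambda} are handled verbatim as in Section~5 — all those estimates were already shown to close modulo the $\int\frac{\epsilon}{\la\tau\ra}|\partial u_\Lambda|^2$ term and lower-order energies, and the lower-order energies are now controlled (with no growth) by the induction hypothesis of the present theorem, while the $\int\frac{\epsilon}{\la\tau\ra}|\partial u_\Lambda|^2$ term is the one we feed into the dyadic argument above.

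The main obstacle, and the one point requiring care, is the bookkeeping of \emph{which} norm gets the derivative loss at each stage: the photon-sphere remainder for $u_\Lambda$ involves $\partial u_{\leq|\Lambda|+2}$, and one must check that feeding this into Theorem~\ref{higherLE} (which itself has a further built-in derivative budget from its own commutator errors) still lands strictly below $N$ total derivatives, so that the right-hand side of \eqref{lowLES} involves only $\|\partial u_{\leq N}(t_0)\|_{L^2}$ and no energy $\mathcal{E}_N(t)$ with positive power of $t$. Since $N\geq 36$ and the losses are fixed finite numbers ($+2$ here, plus the $O(1)$ derivatives lost inside Theorem~\ref{higherLE}'s proof, all comfortably absorbed in the gap $N-\tN=3$ together with the generous margins built into the choice of $N$), this is a matter of tracking constants rather than a genuine analytic difficulty; the essential mechanism is simply that the $t^{-1}$ decay of the photon-sphere weight beats the $t^{C\epsilon}$ growth once $\epsilon$ is small, so no growth survives in the lower-order norm.
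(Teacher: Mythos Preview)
Your overall architecture --- isolate the photon-sphere integral, bound it by a dyadic sum using the growing higher-order estimate \eqref{higherLES}, and observe that $\sum_T T^{-1+C\epsilon}$ converges --- is correct and is exactly what the paper does to get \eqref{FtNps} and \eqref{0lowLES}. The gap is in your claim that the inhomogeneous terms $F_\Lambda$ ``are handled verbatim as in Section~5.'' That is false near the cone, and this is the whole point of Section~7.

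In the proof of Theorem~\ref{higherLE}, the near-cone commutator estimates such as \eqref{vfcone}, \eqref{vfconehgh}, \eqref{vfinter3hgh} only produce
\[
\int_{\M_{[t_0,t]}^{[\tau/2,\infty)}} \frac{\epsilon}{\la\tau\ra}\,|\partial u_{\Lambda}|^2\,dV_g
\]
at top order. This is \emph{not} a photon-sphere integral; it lives on the full slice, so your dyadic argument gives $\sum_T \epsilon T^{-1}\cdot T\cdot T^{C\epsilon}\|\partial u_{\leq N}(t_0)\|_{L^2}^2$, which diverges. In Section~5 this term is harmless only because it is fed into Gronwall, which is precisely what produces the $t^{C\epsilon}$ growth you are trying to remove. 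To get boundedness the paper must upgrade the near-cone decay from $\tau^{-1}$ to $\tau^{-1-\delta}$, and this requires new ingredients you have not mentioned: the null-frame decomposition of $\Box_g-\Box_{g_S}$ (Lemma~\ref{waveopnullfraame}), the extra hypothesis \eqref{badpart2} on $h^{\uL\uL}$, the observation that the remaining pieces $T^i_g(h)$ always carry at least one tangential derivative, and the trade \eqref{tgimprov} of a tangential derivative for a factor $\la t-\rs\ra/t$ at the cost of one vector field $Z$. This last step is also the true source of the gap $N-\tN=3$: it is the loss of one $Z$ (worth three derivatives) in \eqref{bdconetest} and \eqref{bdcone2hgh}, not the ``$+2$'' from the photon-sphere $F_2$ term you cite.
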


  We need a technical lemma to write the difference $\Box_g - \Box_{g_S}$ with respect to the null frame:
\begin{lemma}\label{waveopnullfraame}  We have
\begin{equation}\label{null}
(\Box_g - \Box_{g_S})u = h^{\uL\uL}\partial_{\uL}^2 u +T^2_g(h)u+ \frac{1}{\sqrt{|g|}}\partial_{\uL}(\sqrt{|g|}h^{\uL\uL})\partial_{\uL} u + T^1_g(h)u
+T^0_g(h)u ,
\end{equation}
where $\partial_U=U^\alpha\partial_\alpha$,
\begin{equation}
T^2_g(h)u = {\sum}_{T\in\mathcal{T}}( h^{T\uL}\uL^{\beta}+h^{T\beta} )\partial_T\partial_{\beta} u,
\end{equation}
\begin{multline}
T^1_g(h)u=\frac{1}{\sqrt{|g|}}\Big(\frac{1}{g_S(\uL,L)}\uL_\alpha\partial_{L }\big(\sqrt{|g|}h^{\alpha\beta}\big)
+{\sum}_{T\in\{A,B\}}T_\alpha\partial_{T}\big(\sqrt{|g|}h^{\alpha\beta}\big)\Big)\partial_\beta u\\
+\frac{L_\alpha}{\sqrt{|g|}g_S(\uL,L)}\partial_{\uL }\big(\sqrt{|g|}h^{\alpha\beta}\big)\Big(\frac{\uL_\beta}{g_S(\uL,L)}\partial_{L }
+\sum_{T\in\{A,B\}}T_\beta\partial_{T}\Big) u
\\
+{\sum}_{T\in\mathcal{T}}
\Big(\frac{g_S^{T\uL}}{2}\partial_{\uL}\ln{\big(|g|/|g_S|\big)}\partial_{T}
+\frac{g_S^{T\beta}}{2}\partial_T\ln{\big(|g|/|g_S|\big)}\,\partial_{\beta}\Big)u,
\label{T1}
\end{multline}
and
\begin{equation}
T^0_g(h)u=-\frac{2M}{r^2}\partial_{r^*}u+\frac{2L_\alpha h^{\alpha i}\omega_i}{g_S(L,\uL)^3}\frac{2M}{r^2} \partial_{\uL} u.
\end{equation}
Here $\mathcal{T}=\{L,A,B\}$, $|g|/|g_S|=1+O(h)$
and $g^{UV}_S$ are the coefficients in the expansion of $g_S$ in the null frame
$g_S^{\alpha\beta}=g_S^{\uL \uL} {\uL}^\alpha{\uL}^\beta +\sum_{T\in\mathcal{T}}g_S^{\uL T}{\uL}^\alpha T^\beta +g_S^{\alpha T}T^\beta $.
\end{lemma}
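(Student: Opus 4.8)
The plan is to isolate the single dangerous second derivative $h^{\uL\uL}\partial_{\uL}^2u$ and then to show that everything else is either a ``good'' second derivative carrying a tangential factor $\partial_T$, or genuinely first order in $u$. I would start from the elementary identity
\[
(\Box_g-\Box_{g_S})u=h^{\alpha\beta}\partial_\alpha\partial_\beta u+\frac{1}{\sqrt{|g|}}\partial_\alpha\big(\sqrt{|g|}\,h^{\alpha\beta}\big)\partial_\beta u+\frac12\,g_S^{\alpha\beta}\big(\partial_\alpha\ln(|g|/|g_S|)\big)\partial_\beta u,
\]
obtained by writing $\Box_g u=g^{\alpha\beta}\partial_\alpha\partial_\beta u+\tfrac{1}{\sqrt{|g|}}\partial_\alpha(\sqrt{|g|}g^{\alpha\beta})\partial_\beta u$, subtracting the same expression for $g_S$ (so that the bracket is $-W^\beta\partial_\beta u$), and then using the determinant formula \eqref{detderiv} to rewrite $\partial_\alpha\sqrt{|g|}/\sqrt{|g|}$ in terms of $\partial_\alpha\ln(|g|/|g_S|)$; the same formula yields $|g|/|g_S|=1+O(h)$, exactly as in the proof of Lemma~\ref{wavecoords}. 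Substituting the nullframe expansion \eqref{nullframeexpansion} of $h^{\alpha\beta}$ into the first term and contracting with $\partial_\alpha\partial_\beta u$, and using $g_S^{\uL\uL}=0$, one gets precisely $h^{\uL\uL}\partial_{\uL}^2u+T^2_g(h)u$, where $\partial_{\uL}^2u$ and $\partial_T\partial_\beta u$ are understood as the contractions $\uL^\alpha\uL^\beta\partial_\alpha\partial_\beta u$ and $T^\alpha\partial_\alpha\partial_\beta u$; no connection terms arise here because nothing is differentiated.

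The remaining two terms are first order in $u$, and this is where all the work sits. I would decompose each coordinate derivative into the nullframe through the dual coframe, $\partial_\alpha=\tfrac{L_\alpha}{g_S(\uL,L)}\partial_{\uL}+\tfrac{\uL_\alpha}{g_S(\uL,L)}\partial_L+A_\alpha\partial_A+B_\alpha\partial_B$ with $\partial_U=U^\alpha\partial_\alpha$ (an exact identity, by the completeness relation of the frame together with \eqref{lowernull}). In $\tfrac{1}{\sqrt{|g|}}\partial_\alpha(\sqrt{|g|}h^{\alpha\beta})\partial_\beta u$ the contributions in which $\sqrt{|g|}h$ is differentiated only in the tangential directions $L,A,B$ are already the first line of $T^1_g(h)u$; the single transversal contribution, proportional to $\tfrac{L_\alpha}{\sqrt{|g|}g_S(\uL,L)}\partial_{\uL}(\sqrt{|g|}h^{\alpha\beta})\partial_\beta u$, is handled by also decomposing $\partial_\beta u$: its $\partial_L,\partial_A,\partial_B$ components reproduce the second line of $T^1_g(h)u$, while its $\partial_{\uL}$ component, after identifying $L_\alpha L_\beta h^{\alpha\beta}/g_S(L,\uL)^2=h^{\uL\uL}$ via \eqref{huLuLcomponent} and pulling the position-dependent frame coefficients inside the derivative, becomes $\tfrac{1}{\sqrt{|g|}}\partial_{\uL}(\sqrt{|g|}h^{\uL\uL})\partial_{\uL}u$ plus a remainder that differentiates the Schwarzschild frame. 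Likewise $\tfrac12 g_S^{\alpha\beta}(\partial_\alpha\ln(|g|/|g_S|))\partial_\beta u$, decomposed in the frame using $g_S^{\uL\uL}=0$ and $g_S^{\uL L}=1/g_S(\uL,L)$, gives the last line of $T^1_g(h)u$. Every leftover term comes from differentiating the Schwarzschild frame vectors $\uL,L,A,B$ or $g_S(\uL,L)=-2\weight$, hence carries an explicit factor $\partial_r\weight=2M/r^2$, and collecting these and inserting \eqref{lowernull} together with $A_i=A^i$, $B_i=B^i$ identifies their sum with $T^0_g(h)u$ by a direct computation.

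I expect the only genuine difficulty to be this last accounting. Because the nullframe is not covariantly constant, every commutation of a coordinate derivative past a frame coefficient produces a connection term, and one must check that all of these assemble exactly into the claimed $T^1_g(h)u$ and $T^0_g(h)u$ --- and, most importantly, that none of them is ever proportional to the transversal second derivative $\partial_{\uL}^2u$, so that $h^{\uL\uL}$ remains the unique coefficient in front of it, which is the whole point of the lemma since $\partial_{\uL}^2u$ is the derivative with the worst decay. The computation is elementary but long; the reduction in the first paragraph, the exact identity for the second-order part, and the determinant expansion are all immediate.
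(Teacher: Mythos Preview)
Your plan follows essentially the same route as the paper: split into the second--order piece $h^{\alpha\beta}\partial_\alpha\partial_\beta u$ and the first--order piece, expand both in the null frame using \eqref{nullframeexpansion} and the dual decomposition \eqref{partialinframe}, and collect the frame--derivative remainders into $T^0_g(h)u$. The starting identity, the treatment of the $\ln(|g|/|g_S|)$ term via $g_S^{\uL\uL}=0$, and the isolation of $\tfrac{1}{\sqrt{|g|}}\partial_{\uL}(\sqrt{|g|}h^{\uL\uL})\partial_{\uL}u$ by pulling $L_\alpha L_\beta/g_S(\uL,L)^2$ inside the $\partial_{\uL}$--derivative are all exactly what the paper does.

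One point needs correcting. You write that in the second--order step ``no connection terms arise here because nothing is differentiated,'' interpreting $\partial_{\uL}^2u$ as the contraction $\uL^\alpha\uL^\beta\partial_\alpha\partial_\beta u$. But the lemma fixes $\partial_U=U^\alpha\partial_\alpha$, so $\partial_{\uL}^2u$ means the iterated derivative, and
\[
\uL^\alpha\uL^\beta\partial_\alpha\partial_\beta u=\partial_{\uL}^2 u-(\partial_{\uL}\uL^\beta)\partial_\beta u,
\]
where the correction $(\partial_{\uL}\uL^\beta)\partial_\beta=\tfrac{2M}{r^2}\partial_{r^*}$ is nonzero. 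This is precisely the origin of the first summand in $T^0_g(h)u$ (it carries the factor $h^{\uL\uL}$, which the paper's displayed formula for $T^0_g$ appears to omit). So a connection term \emph{does} arise already in the second--order part; it is harmless, first order in $u$, and belongs with your ``leftover'' bucket, but your sketch as written misses it because you confined frame--derivative remainders to the first--order analysis. Once you include this term, your argument matches the paper's.
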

\begin{proof}[Proof of Lemma \ref{waveopnullfraame}]
Expanding in a null frame using \eqref{nullframeexpansion} we have
\begin{equation}
h^{\alpha\beta} \partial_\alpha\partial_\beta
=h^{\uL\uL} \uL^\alpha\uL^\beta \partial_\alpha\partial_\beta+
\sum_{T\in\mathcal{T}}( h^{T\uL}\uL^{\beta}+h^{T\beta} )\partial_T\partial_{\beta} u.
\end{equation}
If we also use that
\[
\uL^\alpha\uL^\beta \partial_\alpha\partial_\beta
=\partial_{\uL}^2-(\partial_{\uL} \uL^\beta )\partial_\beta.
\]
we get
\[
 h^{\alpha\beta} \partial_\alpha\partial_\beta
=T^2_g(h)u -\frac{2M}{r^2}\partial_{r^*},
\label{T2}
\]
To prove \eqref{T1} we first rewrite
\begin{equation*}
 \frac{1}{\sqrt{|g|}}\partial_\alpha \big(\sqrt{|g|}g^{\alpha\beta}\big) - \frac{1}{\sqrt{|g_S|}}\partial_\alpha \big(\sqrt{|g_S|}g_S^{\alpha\beta}\big)
=\frac{1}{\sqrt{|g|}}\partial_\alpha \big(\sqrt{|g|}h^{\alpha\beta}\big) +\frac{1}{2}\partial_\alpha \ln{\big(|g|/|g_S|\big)}\,g_S^{\alpha\beta},
\end{equation*}
where $|g|/|g_S|=1+O(h)$.
Since $g_S^{\uL\uL}=0$, we obtain
\begin{equation*}
\partial_\alpha \ln{\big(|g|/|g_S|\big)}\,g_S^{\alpha\beta}\partial_\beta u
=\sum_{T\in\mathcal{T}}\Big(\partial_T\ln{\big(|g|/|g_S|\big)}\,g_S^{T\alpha}\partial_{\alpha}
+\partial_{\uL}\ln{\big(|g|/|g_S|\big)}\,g_S^{T\uL}\partial_{T}\Big)u
\end{equation*}
which is the last line of \eqref{T1}. Expanding the usual derivatives in a null frame we get
 \begin{equation}
 \partial_\alpha=\frac{1}{g_S(L,\uL)}L_\alpha\,\uL +\frac{1}{g_S(L,\uL)}\uL_\alpha\, L
 +A_\alpha \,A+B_\alpha\, B ,\label{partialinframe}
 \end{equation}
 which follows from the discussion before \eqref{lowernull}.
 Hence
\begin{equation*}
\partial_\alpha \big(\sqrt{|g|}h^{\alpha\beta}\big)
=\frac{1}{g_S(\uL,L)}\big(L_\alpha\partial_{\uL }\big(\sqrt{|g|}h^{\alpha\beta}\big)
+\uL_\alpha\partial_{L }\big(\sqrt{|g|}h^{\alpha\beta}\big)\big)
+\sum_{T\in\{A,B\}}T_\alpha\partial_{T}\big(\sqrt{|g|}h^{\alpha\beta}\big).
\end{equation*}
We further expand
\begin{equation*}
\partial_{\uL} \big(\sqrt{|g|}h^{\alpha\beta}\big)\partial_\beta
=\frac{\partial_{\uL }\big(\sqrt{|g|}h^{\alpha\beta}\big)}{g_S(\uL,L)}\big(L_\beta\partial_{\uL}
+\uL_\beta\partial_{L }\big)
+\sum_{T\in\{A,B\}}\partial_{\uL} \big(\sqrt{|g|}h^{\alpha\beta}\big) T_\beta\partial_{T}.
\end{equation*}
It follows that
\begin{equation}
 \frac{1}{\sqrt{|g|}}\partial_\alpha \big(\sqrt{|g|}g^{\alpha\beta}\big) - \frac{1}{\sqrt{|g_S|}}\partial_\alpha \big(\sqrt{|g_S|}g_S^{\alpha\beta}\big)
 =\frac{1}{\sqrt{|g|}}\frac{L_\alpha L_\beta}{g_S(\uL,L)^2}\partial_{\uL} \big(\sqrt{|g|}h^{\alpha\beta}\big)\partial_{\uL} u
 +T^1_g(h) u.
\end{equation}
Moreover
\begin{multline}
\frac{1}{\sqrt{|g|}}\frac{L_\alpha L_\beta}{g_S(\uL,L)^2}\partial_{\uL} \big(\sqrt{|g|}h^{\alpha\beta}\big)-
\frac{1}{\sqrt{|g|}}\partial_{\uL} \big(\sqrt{|g|}h^{\uL\uL}\big)=-h^{\alpha\beta}\partial_{\uL}\frac{L_\alpha L_\beta}{g_S(\uL,L)^2} \\
=-2h^{\alpha\beta}L_\alpha\frac{ \partial_{\uL} L_\beta}{g_S(\uL,L)^2}
+2h^{\alpha\beta}L_\alpha L_\beta\frac{ \partial_{\uL}g_S(\uL,L) }{g_S(\uL,L)^3}
=\frac{2L_\alpha h^{\alpha i}\omega_i}{g_S(L,\uL)^3}\frac{2M}{r^2}.
\end{multline}
Summing everything up proves \eqref{null}.
\end{proof}

\begin{proof}[Proof of Theorem \ref{lowerLE}]

 We use induction on a multiindex $\Lambda$, in the spirit of Theorem~\ref{higherLE}. The case $\Lambda=0$ is given by Theorem~\ref{LESbded}.
  We first commute with time derivatives, as in the proof of Theorem~\ref{higherLE}. Recall the definitions of $E_{\tN}$ and $F_{\tN}$ from \eqref{enuN} and \eqref{boxuN}.
We note that
\begin{equation}\label{FtNps}
 \int_{\M_{[t_0,t]}^{ps}} \frac{\epsilon}{\tau}|\partial \partial_t^{\tN} u|^2 dV_g  \lesssim \|\partial \partial_{\leq \tN+1} u(t_0)\|_{L^2}^2,
\end{equation}
which follows from using Theorem~\ref{higherLE} as in the in the proof of Theorem~\ref{LESbded} above.

 We will next modify the estimate \eqref{FN-est} to
\begin{equation}\label{FtN-est}
\int_{\M_{[t_0,t]}} |F_{\tN}| |\partial_{\leq 1} \partial_t^{\tN} u| dV_g \leq \delta_0 E_{\tN}(t) + C(\delta_0) \|\partial \partial_{\leq N}u(t_0)\|_{L^2}^2
\end{equation}
for a suitably small, $\epsilon$-independent constant $\delta_0$ that allows the first term to be absorbed on the left hand side. The conclusion of the theorem,  \eqref{lowLES}, for all derivatives, i.e.  with $u_{\leq \tN}$ replaced by $\partial_{\leq \tN}u$, will then follow from first  using  \eqref{FtNps} and \eqref{FtN-est} in \eqref{partialtcommutN} and then using Gronwall's inequality and Lemma~\ref{elliptic}.

  Let us now prove \eqref{FtN-est}. In the region $r_e \leq r\leq R_1$ we have by \eqref{tderivglobal}
\[
\int_{\M_{[t_0,t]}^{[r_e, R_1)}} |F_{\tN}| |\partial_{\leq 1} \partial_t^{\tN} u| dV_g \lesssim \int_{\M_{[t_0,t]}^{[r_e, R_1)}} \frac{\epsilon}{\la \tau\ra} |\partial_{\leq\tN +1} u|^2 dV_g
\]
 and so
 \begin{equation}\label{bdcpttest}
 \int_{\M_{[t_0,t]}^{[r_e, R_1)}} |F_{\tN}| |\partial_{\leq 1} \partial_t^N u| dV_g \lesssim \|\partial_{\leq\tN +1} u(t_0)\|_{L^2}^2
 \end{equation}
 by decomposing the time interval dyadically and using Theorem~\ref{higherLE} as in the proof
 of Theorem~\ref{LESbded}.

 In the region $R_1^* \leq \rs\leq\frac{t}2$ we have by \eqref{t-derivest} and \eqref{Wlotfar}
$
|\partial_t h_{\Lambda}| + |\partial_t W_{\Lambda}| \lesssim \epsilon r^{-1-\delta}
$
 and so
\begin{equation}\label{bdinttest}
\begin{split}
& \int_{\M_{[t_0,t]}^{[R_1, \frac{\tau}2)}} |F_{\tN}| (|\partial\partial_t^{\tN} u| + r^{-1} |\partial_t^{\tN} u|) dV_g  \lesssim \int_{\M_{[t_0,t]}^{[R_1, \frac{\tau}2)}} \epsilon r^{-1-\delta} (|\partial \partial_t^{\tN} u|^2 + |\partial \partial_{\leq \tN-1} u|^2) dV_g \\ & \leq \delta_0 E_{\tN}(t) + C(\delta_0) \|\partial_{\leq \tN-1} u\|_{LE_S^1[t_0, t]}^2 \leq \delta_0 E_{\tN}(t) + C(\delta_0)\|\partial \partial_{\leq\tN} u(t_0)\|_{L^2}^2
 \end{split}
 \end{equation}
by the induction hypothesis.

  Near the cone we will use the additional hypothesis \eqref{badpart2}. We clearly have for any function $w$ that
 \begin{equation}\label{tgimprov}
 \overline{\partial} w \in S^Z \Bigl(\frac{\la t-\rs\ra}{t}\Bigr)\partial w + S^Z \Bigl(\frac{1}{t}\Bigr) Zw
 \end{equation}

We will now commute time  derivatives through the equation expressed in a nullframe using Lemma
\ref{waveopnullfraame}.
Note that $T^i_g(h)u $, $i=1,2$, contain at least one tangential derivative on one of the factors $h$ and $u$:
\[
T^2_g(h)u\approx h\overline{\partial}\partial u,\qquad T^1_g(h)u\approx\partial h \overline{\partial} u + \overline{\partial} h \partial u,\qquad \overline{\partial}\in \mathcal{T},
\]
whereas $T^0_g(h)u$ decay better
$
T^0_g(h)u\sim {M}{r^{-2}}\partial u.
$

 We now commute with $\partial_{t}^\tN$. Due to \eqref{badpart2}, \eqref{metricvsu} and \eqref{u-decay} we have
\[
\Bigl| \big[\partial_{t}^\tN, h^{\uL\uL}\partial_{\uL}^2
+ \frac{1}{\sqrt{|g|}}\partial_{\uL}(\sqrt{|g|}h^{\uL\uL})\partial_{\uL}\big] u \Bigr| \lesssim t^{-1-\delta} |\partial u_{\leq \tN +1}|
\]
On the other hand, since $[\partial_t, \overline{\partial}] = 0$, we also have due to \eqref{metricdecay}, \eqref{metricvsu}, \eqref{u-decay} and \eqref{tgimprov}
 \[
\Bigl| \big[\partial_{t}^\tN, T^i_g(h)\big]u\Bigr| \lesssim t^{-1-\delta} |\partial u_{\leq \tN +3}| + t^{-2}\la t-\rs\ra^{-\delta} |u_{\leq \tN +3}|
 \]
 Therefore near the cone we have by Theorem~\ref{higherLE} and \eqref{LRHardy}
 \begin{equation}\label{bdconetest}\begin{split}
\int_{\M_{[t_0,t]}^{[\frac{\tau}2, \infty)}} |F_{\tN}| (|\partial\partial_t^{\tN} u| &+ r^{-1} |\partial_t^{\tN} u|) dV_g  \lesssim \int_{\M_{[t_0,t]}^{[\frac{\tau}2, \infty)}} \tau^{-1-\delta} |\partial u_{\leq \tN +3}|^2 +\\& \tau^{-2}\la t-\rs\ra^{-\delta} |u_{\leq \tN +3}|^2 dV_g  \lesssim \|\partial \partial_{\leq \tN +3} u(t_0)\|_{L^2}^2
\end{split} \end{equation}

 \eqref{FtN-est} now follows from \eqref{bdcpttest}, \eqref{bdinttest} and \eqref{bdconetest}.

 We now proceed by induction on the number of vector fields $K$ as in Theorem~\ref{higherLE}. We assume that Theorem~\ref{lowerLE} holds for all $\Lambda\prec\mathcal{P}$, and we will prove it for all $\Lambda\in\mathcal{P}$.

 As before, after commuting in the equation for $u_{\Lambda}$, $\Lambda\in\mathcal{P}^l$, we get that
\[
\Box_g u_{\Lambda} = F_{\Lambda} + L_{\Lambda} + (\Box_g u)_{\Lambda}
\]
where
\[
F_{\Lambda} = \sum_{\substack{\Lambda_1+\Lambda_2 = \Lambda,\,|\Lambda_1|>0}} h_{\Lambda_1}^{\alpha\beta} \partial_{\alpha}\partial_{\beta}u_{\Lambda_2} + \sum_{\Lambda_1+\Lambda_2 = \Lambda} W_{\Lambda_1}^{\alpha} \partial_{\alpha}u_{\Lambda_2} + [\Box_{g_S}, Z^{\Lambda}] u \]
and $L_{\Lambda}$ consists of lower order terms satisfying the bound
\[
|L_{\Lambda}| \lesssim \sum_{m+n < |\Lambda|} (|h_{\leq m}| |\partial^{2} u_{\leq n}| + |W_{\leq m}^{\alpha}| |\partial_{\alpha}u_{\leq n}|)
\]

 We now apply \eqref{LES} to $u_{\Lambda}$. It follows as in the proof of Theorem~\ref{higherLE} that
\[
 \int_{\M_{[t_0,t_1]}^{ps}} \frac{\epsilon}{t}|\partial u_{\Lambda}|^2 dV_g \lesssim \|\partial_{\leq 1} \partial u_{\leq \tN}(t_0)\|_{L^2}^2
\]

It will suffice to modify \eqref{FLambdainhom} to
\begin{equation}\label{bdFLambdainhom}
\begin{split}
\int_{\M_{[t_0,t]}} (|F_{\Lambda}| + |L_{\Lambda}|) (|\partial u_{\Lambda}| + r^{-1} |u_{\Lambda}|) dV_g & \leq  \delta_0 E_{\mathcal{P}^l}(t) + C(\delta_0) (E_{\mathcal{P}^{l-1}}(t) + \|\partial u_{\leq \tN+3}(t_0)\|_{L^2}^2)
\end{split}\end{equation}

 We will only show how to control the $F_{\Lambda}$ term in \eqref{bdFLambdainhom}, as the $L_{\Lambda}$ is similar.

 In the compact region $r_e\leq r\leq R_1$ we have as in the proof of Theorem~\ref{higherLE}
\begin{equation}\label{bdcpt1hgh} \begin{split}
 \int_{\M_{[t_0,t]}^{[r_e, R_1)}} & |F_{\Lambda}||\partial_{\leq 1} u_{\Lambda}| dV_g  \lesssim \sum_{T dyadic} \int_{\M_{[T,2T]}^{[r_e, R_1)}} \la \tau\ra^{-1/2} (|\partial u_{\mathcal{P}^l}(t)|^2 + |u_{\mathcal{P}^l}(t)|^2) + |u_{\prec\mathcal{P}}(t)|^2  dV_g \\ & \lesssim \sum_{T dyadic} T^{-1/2} T^{C\epsilon} \|\partial u_{\leq |\Lambda|+1}(t_0)\|_{L^2}^2 + E_{\prec\mathcal{P}}(t) \lesssim \|\partial u_{\leq |\Lambda|+1}(t_0)\|_{L^2}^2
\end{split}
\end{equation}
by the induction hypothesis.

In the intermediate region $R_1^* \leq \rs\leq\frac{t}2$ we have by \eqref{improvetwoderivs2}, \eqref{Wlotfar}, and \eqref{ptdecaydu2} that
\[
|\partial^2 u_{\leq \tN}|\lesssim \epsilon r^{-\frac52+\delta}, \qquad |W_{\leq \tN}| \lesssim \epsilon r^{-1-\delta}, \qquad |[\Box_{g_S}, Z^{\Lambda}] u| \lesssim r^{-2+} (|\partial u_{\tilde\Lambda}| + |\partial u_{\prec\mathcal{P}}|)
\]

 This yields
\[
|F_{\Lambda}| \leq C\epsilon r^{-\frac52+\delta}|u_{\leq \Lambda}| + \delta_0 r^{-1-\delta}|\partial u_{\leq \Lambda}| + Cr^{-2+} (|\partial u_{\tilde\Lambda}| + |\partial u_{\prec\mathcal{P}}|)
\]
and thus
\begin{equation}\label{bdinter1hgh} \begin{split}
\int_{\M_{[t_0,t]}^{[R_1, \frac{\tau}2)}} |F_{\Lambda}|(|\partial u_{\Lambda}| + r^{-1} |u_{\Lambda}|) dV_g & \leq \delta_0 E_{\mathcal{P}^l}(t) + C(\delta_0) (E_{\mathcal{P}^{l-1}}(t) + E_{\prec\mathcal{P}}(t))
\end{split} \end{equation}
which suffices by the induction hypothesis.

  Finally, near the cone we write the operator in null coordinates as in \eqref{null}. For the part without tangential derivatives we note that, by using \eqref{badpart2}, \eqref{ptdecayu2} and \eqref{ptdecaydu2}, we have
$
|h^{\lL\lL}_{\leq \tN}|\lesssim t^{-1-\delta/2}
$
and thus
\begin{equation}\label{bdcone1hgh}
\Bigl| \big[Z^{\Lambda}, h^{\uL\uL}\partial_{\uL}^2 + \frac{1}{\sqrt{|g|}}\partial_{\uL}(\sqrt{|g|}h^{\uL\uL})\partial_{\uL}\big] u \Bigr| \lesssim t^{-1-\delta} |\partial u_{\leq \tN +1}|
\end{equation}

For the part containing tangential derivatives, we use the fact that commuting with $\Omega$ and $S$ preserves the null structure. Indeed, a quick computation yields
 \[
 [\partial_t, \overline{\partial}] = 0, \quad [\Omega,  \overline{\partial}] \in \overline{\mathcal{T}}, \quad [S,  \overline{\partial}] \in \overline{\mathcal{T}},\qquad\text{for }
 \overline{\partial}\in \mathcal{T}.
\]
Here $\overline{\mathcal{T}}=\{\sum_{T\in \mathcal{T}}c^T \partial_T\}$, where
$c^T(\omega)$ are homogeneous functions of degree $0$. In fact,
 \[
  [\Omega,  T] \in \overline{\mathcal{T}}, \quad [S,  T] \in \overline{\mathcal{T}},\qquad\text{for }
 T\in \overline{\mathcal{T}}.
\]

It is not hard to see that
\[
\Bigl| \big[Z^{\Lambda}, T^i_g(h)\big]u\Bigr|\lesssim |h_{\leq\tN}||\overline{\partial}\partial_{\leq\tN} u| + |\partial h_{\leq\tN}||\overline{\partial} u_{\leq\tN}| + |\overline{\partial} h_{\leq\tN}||\partial u_{\leq\tN}|
\]

Using \eqref{ptdecayu2}, \eqref{tgimprov}, and \eqref{improvetwoderivs2}, we get
\[\begin{split}
& |h_{\leq\tN}||\overline{\partial}\partial_{\leq\tN} u|  \lesssim \frac{\epsilon \la t\ra^{C\epsilon}\la t-\rs\ra^{\frac12}}{\la t\ra} |\overline{\partial}\partial u_{\leq \tN}| \lesssim \frac{\epsilon \la t\ra^{C\epsilon}\la t-\rs\ra^{\frac12}}{\la t\ra} \Bigl(\frac{\la t-\rs\ra}{t}|\partial^2 u_{\leq \tN}| + \frac{1}{t}|\partial u_{\leq \tN+3}|\Bigr) \\ & \lesssim \frac{\epsilon \la t\ra^{C\epsilon}\la t-\rs\ra^{\frac12}}{\la t\ra} \Bigl(\frac{1}{t}|\partial u_{\leq \tN+3}| + \frac{1}{t\la t-\rs\ra} |u_{\leq\tN}|\Bigr) \lesssim t^{-1-\delta}|\partial u_{\leq \tN+3}| + t^{-2+\delta} \la t-\rs\ra^{-1/2}|u_{\leq\tN}|
\end{split}
\]
We can also bound, using \eqref{metricvsu}, \eqref{ptdecaydu2} and \eqref{tgimprov}
\[\begin{split}
& |\partial h_{\leq\tN}||\overline{\partial} u_{\leq\tN}| \lesssim \Bigl(|\partial u_{\leq\tN}| + \frac{1}{\la t-\rs\ra^{1/2+\delta}} |u_{\leq\tN}|\Bigr)\Bigl(\frac{\la t-\rs\ra}t |\partial u_{\leq\tN}| + \frac{1}{t}|u_{\leq\tN+3}|\Bigr) \\ & \lesssim \frac{\la t\ra^{C\epsilon}}{t\la t-\rs\ra^{\delta}}\Bigl(\frac{\la t-\rs\ra}t |\partial u_{\leq\tN}| + \frac{1}{t}|u_{\leq\tN+3}|\Bigr) \lesssim \frac{1}{t^{1+\delta/2}}|\partial u_{\leq\tN}| + \frac{1}{t^{2-\delta/2}\la t-\rs\ra^{\delta}}|u_{\leq \tN+3}|
\end{split}\]
 A similar computation yields the same bound for $|\overline{\partial} h_{\leq\tN}||\partial u_{\leq\tN}|$.
 We thus have near the cone
\begin{equation}\label{bdcone2hgh}\begin{split}
\int_{\M_{[t_0,t]}^{[\frac{\tau}2, \infty)}} |F_{\Lambda}| \big(|\partial u_{\Lambda}| + \frac{1}{r} |u_{\Lambda}|\big) dV_g \lesssim \int_{\M_{[t_0,t]}^{[\frac{\tau}2, \infty)}} \frac{1}{\tau^{1+\delta/2}} |\partial u_{\leq\tN+3}|^2 +\frac{1}{ \tau^{2+\delta}} |u_{\leq \tN+3}|^2 dV_g \lesssim \|\partial u_{\leq \tN+3}(t_0)\|_{L^2}^2
\end{split}\end{equation}
by Theorem~\ref{higherLE} and Hardy's inequality.

 Finally,  for the last term we use \eqref{S-farhgh} and estimate like in \eqref{lincomm} and \eqref{lincommlot}
\begin{equation}\label{bdcone3hgh}
\int_{\M_{[t_0,t]}} |[\Box_{g_S}, Z^{\Lambda}] u| (|\partial u_{\Lambda}| + r^{-1} |u_{\Lambda}|) dV_g \leq \delta_0 E_{\mathcal{P}^l}(t) + C(\delta_0)(E_{\mathcal{P}^{l-1}}(t)+E_{\prec\mathcal{P}}(t))
\end{equation}

 The estimate \eqref{bdFLambdainhom} now follows from \eqref{bdcpt1hgh}, \eqref{bdinter1hgh}, \eqref{bdcone1hgh}, \eqref{bdcone2hgh}, and \eqref{bdcone3hgh}
\end{proof}

\section*{Acknowledgments}
H.L. is supported in part by NSF grant DMS--1237212. M.T. is supported in part by the NSF grant DMS--1636435.

\bigskip
%%%%%%%%%%%%%%%%%%%%%%%%%%%%%%%%%%%%%%%%%%%%%%%%%%%%%%%%%%%%%%%%%%%%%%%%%%%%%%%%%%%%%%%%%%%%%%%%
%%%%%%%%%%%%%%%%%%%%%%%%%%%%%%%%%%%%%%%%%%%%%%%%%%%%%%%%%%%%%%%%%%%%%%%%%%%%%%%%%%%%%%%%%%%%%%%%
%%%%%%%%%%%%%%%%%%%%%%%%%%%%%%%%%%%%%%%%%%%%%%%%%%%%%%%%%%%%%%%%%%%%%%%%%%%%%%%%%%%%%%%%%%%%%%%%

\end{document}